\Crefname{ALC@unique}{Line}{Lines}
\colorlet{texcscolor}{blue!50!black}
\colorlet{texemcolor}{red!70!black}
\colorlet{texpreamble}{red!70!black}
\colorlet{codebackground}{black!25!white!25}
\lstdefinestyle{siamlatex}{%
  style=tcblatex,
  texcsstyle=*\color{texcscolor},
  texcsstyle=[2]\color{texemcolor},
  keywordstyle=[2]\color{texemcolor},
  moretexcs={cref,Cref,maketitle,mathcal,text,headers,email,url},
}
\DeclareTotalTCBox{\code}{ v O{} }
{ %fontupper=\ttfamily\color{texemcolor},
  fontupper=\ttfamily\color{black},
  nobeforeafter,
  tcbox raise base,
  colback=codebackground,colframe=white,
  top=0pt,bottom=0pt,left=0mm,right=0mm,
  leftrule=0pt,rightrule=0pt,toprule=0mm,bottomrule=0mm,
  boxsep=0.5mm,
  #2}{#1}
\patchcmd\newpage{\vfil}{}{}{}
\title{Nehari manifold optimization and its application for finding unstable solutions of semilinear elliptic PDEs\thanks{\funding{This work was supported by the NSFC grant 12171148. The work of Z.~Chen and Z.~Xie was also supported by the Major Program of Xiangjiang Laboratory, China (No. 22XJ01013). The work of W.~Liu was also supported by the NSFC grant 12101252 and the NUDT grant 202402-YJRC-XX-002. The work of W.~Yi was also supported by the NSFC grant 12471374, Hunan Provincial Natural Science Foundation (No.~2025JJ40001), and the Key Scientific Research Project of the Education Department of Hunan Province (No.~23A0034).}}}
\author{Zhaoxing Chen\thanks{MOE-LCSM,  School of Mathematics and Statistics, Institute of Interdisciplinary Studies, Hunan Normal University, Changsha 410081, China (\email{zhaoxingchen@hunnu.edu.cn}).}
\and Wei Liu\thanks{Department of Mathematics, National University of Defense Technology, Changsha 410073, China (\email{wl@nudt.edu.cn}).}
\and Ziqing Xie\thanks{Corresponding author. MOE-LCSM, School of Mathematics and Statistics, Institute of Interdisciplinary Studies, Hunan Normal University, Changsha 410081, China (\email{ziqingxie@hunnu.edu.cn}).}
\and Wenfan Yi\thanks{Hunan Provincial Key Laboratory of Intelligent Information Processing and Applied Mathematics, School of Mathematics, Hunan University, Changsha, 410082, China (\email{wfyi@hnu.edu.cn}).}}
\begin{document}
\maketitle
%% ------------------------------------------------------------------
%% ABSTRACT
%% ------------------------------------------------------------------
% \begin{tcbverbatimwrite}{tmp_\jobname_abstract.tex}
\begin{abstract}
A {\em Nehari manifold optimization method} (NMOM) is introduced for finding 1-saddles, i.e., saddle points with the Morse index equal to one, of a generic nonlinear functional in Hilbert spaces. Actually, it is based on the variational characterization that 1-saddles of this functional are local minimizers of the same functional restricted on the associated Nehari manifold. The framework contains two important ingredients: one is the retraction mapping to make the iterative points always lie on the Nehari manifold; the other is the tangential search direction to decrease the functional with suitable step-size search rules. Particularly, the global convergence is rigorously established by virtue of some crucial analysis techniques (including a weak convergence method) that overcome difficulties in the infinite-dimensional setting. In practice, combining with an easy-to-implement Nehari retraction and the negative Riemannian gradient direction, the NMOM is successfully applied to compute the unstable ground-state solutions of a class of typical semilinear elliptic PDEs, such as the stationary nonlinear Schr\"odinger equation and the H\'enon equation. In particular, the symmetry-breaking phenomenon of the ground states of the H\'enon equation is explored numerically in 1D and 2D with interesting numerical findings on the critical value of the symmetry-breaking reported. 
\end{abstract}

\begin{keywords}
Nehari manifold optimization, 1-saddle, Nehari retraction, global convergence, semilinear elliptic PDE
\end{keywords}

\begin{MSCcodes}
35B38, 58E30, 65K10, 65N12
% 35B38: Critical points of functionals in context of PDEs (e.g., energy functionals)
% 58E30: Variational principles in infinite-dimensional spaces
% 65K10: Numerical optimization and variational techniques
% 65N12: Stability and convergence of numerical methods for boundary value problems involving PDEs
\end{MSCcodes}
% \end{tcbverbatimwrite}
% \input{tmp_\jobname_abstract.tex}
%% ------------------------------------------------------------------
%% END HEADER
%% ------------------------------------------------------------------
\section{Introduction}\label{section 1}
In this paper, we consider saddle points of a continuously Fr\'echet-differentiable functional $E$ which is nonlinear and defined in a real Hilbert space $H$. A saddle point $u^*$ of $E$ refers to a critical point but not a local extremizer (minimizer and maximizer) of $E$, i.e., the Fr\'echet derivative $E'(u^*)=0$ and any neighborhood of $u^*$ in $H$ contains points $w$ and $v$ s.t. $E(w)<E(u^*)<E(v)$. In physics, chemistry and materials science, etc., saddle points appearing as unstable equilibria or transient excited states of the energy surface, have broad applications and gain more and more interests \cite{2013BaoBEC,EZ2011-GAD,LMM,LXY-Goldstein,LXY-NWPLMM,LXY2023JCP,E2014Science,PWZhang2021PNAS}. By Morse theory, if the second-order Fr\'echet derivative $E''(u^*)$ at a critical point $u^*$ exists, the instability of $u^*$ can be depicted quantitatively by the Morse index (MI), which is defined as the maximal dimension of negative-definite subspaces of $E''(u^*)$. To be precise, let $H=H^{-}\oplus H^0\oplus H^+$, where $H^-$, $H^0$ and $H^+$ are respectively the maximum negative-definite, null and maximum positive-definite orthogonal subspaces of the linear operator $E''(u^*)$ in $H$ with $\dim(H^0)<\infty$, then the Morse index of $u^*$ is defined as the dimension of $H^-$~\cite{1994Infinite}, denoted as $\operatorname{MI}(u^*)=\dim(H^-)$. For a nondegenerate critical point $u^*$ (i.e., $E''(u^*)$ is invertible), if its $\operatorname{MI}=0$, then it is a local minimizer which is stable; and if its $\operatorname{MI}>0$, then it is a saddle point which is unstable. Specially, a saddle point with $\mathrm{MI}=k$ ($k\in\mathbb{N}^+$) is called a {\em $k$-saddle} for simplicity. 
Due to the nonlinearity and nonconvexity of the functional $E$, as well as the instability and multiplicity of saddle points, there are great challenges on theoretical analysis and numerical computation for saddle points.

In the critical point theory, the Nehari manifold is a central concept for addressing the existence issue of critical points~\cite{1960On,1961Characteristic,Struwe2000,2010Themethod}. For the functional $E$, the associated Nehari manifold $\mathcal{N}$ is defined as 
\begin{align}\label{Nehari}
 \mathcal{N}=\left\{u\in H\backslash\{0\}: \langle E'(u),u\rangle=0\right\},
\end{align}
which clearly contains all nontrivial (non-zero) critical points of the functional $E$ in $H$. Hereafter, $\langle\cdot,\cdot\rangle$ denotes the duality pairing between $H$ and its dual space $H^*$. It is worthwhile to point out that, when $E$ is the energy functional corresponding to a certain nonlinear ODE/PDE, the global minimizer on the Nehari manifold $\mathcal{N}$ exists and is exactly a 1-saddle of the energy functional $E$ in $H$ \cite{1960On,1961Characteristic,Ni1989}. Inspired by the standard assumptions of the theoretical Nehari method in the critical point theory \cite{0Nonlinear,2010Themethod}, we consider the following basic assumptions to own a nice geometric setting of the Nehari manifold $\mathcal{N}$ for a generic functional $E$: 
\begin{itemize}
    \item[(A1)]  $E \in C^2(H,\mathbb{R})$ and for each $v \in H \backslash \{0\}$, the function $\Phi(s) = E(sv)$ w.r.t. $s\in(0,\infty)$ admits a unique critical point $s_v>0$; 
    \item[(A2)] $\langle E''(u)u,u \rangle <0$ for all $u \in \mathcal{N}$;
    \item[(A3)] there exists a constant $\sigma_0>0$ s.t. $\|u\|_H \geq \sigma_0$ for all $u\in\mathcal{N}$.
\end{itemize}
In fact, the assumption (A1) states that the ray $\{sv:s>0\}$ along each direction $v \in H \backslash \{0\}$ intersects $\mathcal{N}$ at a unique point $s_vv$ and $\mathcal{N}=\{s_vv: v \in H \backslash \{0\}, s_v > 0\}$. Moreover, with the assumption (A2), $s_v$ is actually a strict global maximum point of $\Phi(s) = E(sv)$ in $(0,\infty)$. Thus $E$ is bounded from below on $\mathcal{N}$ and $E(u) \geq E(0), \; \forall \; u \in \mathcal{N}$. In addition, (A3) states that $\mathcal{N}$ is bounded away from 0 and then $\mathcal{N}$ is a closed subset of $H$. As we will see later in Section~\ref{section 2}, under the assumptions (A1)-(A3), the Nehari manifold $\mathcal{N}$ \eqref{Nehari} for the functional $E$ is indeed a $C^1$ Riemannian submanifold of $H$, and the local minimizers of $E$ on $\mathcal{N}$ can characterize all (nondegenerate) 1-saddles of $E$ in $H$. This motivates us to consider the numerical computation for 1-saddles of a generic functional $E$ in $H$ via the minimization on the associated Nehari manifold $\mathcal{N}$ \eqref{Nehari}, i.e., 
\begin{align}\label{eq:minEonN}
    \min_{u\in \mathcal{N}}E(u),
\end{align}
which will be the main concern in this paper. 
In addition, 1-saddle is of great significance in some applications such as the study of rare events in which a 1-saddle represents the transition state between metastable states on a complex energy landscape \cite{CM1981JChemPhys,EZ2011-GAD,E2014Science,PWZhang2021PNAS,ZDZ2016SISC}.

In the literature, there is another popular way to characterize saddle points of the functional $E$ by minimax problems~\cite{LMM,LMMConvergence2002,Rabinowitz1986}, leading to the development of minimax-type numerical algorithms for finding saddle points~\cite{2000Algorithms,1993MPA,1999HLA,LMM,LMMConvergence2002,XieYuanZhou2012}. Particularly, Li and Zhou established a local minimax principle and designed a local minimax method (LMM) for computing multiple saddle points~\cite{LMM,LMMConvergence2002}. Note that for the case of 1-saddles, the LMMs can be reduced to consider the two-level minimax problem~\cite{LMM,LMMConvergence2002,XieYuanZhou2012}
\begin{align}\label{minimax characters}
    \min_{v \in S_H} \max_{u\in\{sv:s>0\}} E(u),
\end{align}
where $S_H = \{v\in H: \|v\|_H =1\}$ denotes the unit sphere in $H$. In fact, the two-level minimax problem \eqref{minimax characters} can be reformulated as $\min_{u\in \mathcal{M}} E(u)$, where $\mathcal{M}=\{s_vv: E(s_v v)= \max_{s>0} E(sv),v\in S_H\}$, called the `solution submanifold' \cite{LMM,LMMConvergence2002,XieYuanZhou2012}, is the set of all solutions to the local maximization problem at the first level. If the functional $E$ owns a unique nontrivial critical point in each direction and the critical point is maximum point, the `solution submanifold' $\mathcal{M}$ is identical to the Nehari manifold $\mathcal{N}$ \eqref{Nehari} under the assumptions (A1)-(A3). Hence, \eqref{minimax characters} provides a two-level local minimax characterization for \eqref{eq:minEonN}. By defining a composite functional $f(v) = E(p(v))$ with $p(v) = s_v v\in\mathcal{M}$ the so-called local peak selection in the LMMs \cite{LMM,LMMConvergence2002,XieYuanZhou2012}, a natural idea to address the minimax problem \eqref{minimax characters} is to consider the minimization problem $\min_{v \in S_{H}} f(v)$ on $S_{H}$.   However, the objective composite functional $f(v)$ is a little intricate.  LMMs adapt a two-level nested iterative scheme to numerically implement \eqref{minimax characters}, where a standard unconstrained optimization algorithm is applied at the first level and a normalized steepest descent algorithm on $S_{H}$ is designed to update $v$ at the second level.
While the minimization formulation \eqref{eq:minEonN} on the Nehari manifold is both mathematically and algorithmically straightforward to find 1-saddles of the functional $E$ in $H$, providing a clear pathway for the implementation and analysis. To our knowledge, no numerical methods have been developed to find saddle points by directly solving~\eqref{eq:minEonN} and utilizing geometric properties of the associated Nehari manifold. There also exist  diverse numerical methods for finding saddle points or multiple solutions in the literature,  whose motivations are quite different from our approach. Consequently, discussions on these methods are skipped and the details refer  to~\cite{CM1981JChemPhys,2004Search,EZ2011-GAD,FBF2015SISC,GLZ2015SINUM,LWL2023JSC,LWZ2017JSC,LXY2023JCP,LYZ-ActionGS,Wang2022JCP,XYZ2015JCAM,2008Yang,ZDZ2016SISC} and the references therein.

    Intuitively, the task of numerically solving \eqref{eq:minEonN} is naturally an optimization problem on a Riemannian manifold which is generally infinite-dimensional. Riemannian optimization has been well studied for finite-dimensional manifolds \cite{2008Optimization,boumal2023intromanifolds,HLWY2020JORSC}. Its basic idea is to introduce suitable retraction mappings to construct iterative sequences along the manifold by fully utilizing the geometry properties of the underlying manifold. For infinite-dimensional manifolds, due to the lack of compactness, there are great challenges to construct and analyze Riemannian optimization algorithms. Several numerical researches have been carried out for some special optimization problems on infinite-dimensional manifolds, such as the Kohn–Sham energy minimization on the infinite-dimensional Stiefel manifold (defined by $L^2$-orthogonality conditions) for the electronic structure computation \cite{2021Energy-adaptive} and the Gross-Pitaevskii energy minimization on the infinite-dimensional $L^2$-spherical manifold (or its submanifold) for Bose-Einstein condensates \cite{2017Tang,2017Computation,2020GroundBEC}. To our knowledge, few studies have focused on the convergence analysis for infinite-dimensional Riemannian optimization. It  is  worthwihile to point out that,  in~\cite{Ring2012}, under certain conditions for a (locally) strictly convex functional, the convergence analysis for standard Riemannian optimization methods has been provided in the infinite-dimensional cases. For the Nehari manifold $\mathcal{N}$ \eqref{Nehari} associated to the functional $E$ considered in this paper, the convexity of the functional $E$ cannot be guaranteed, so that the approach in~\cite{Ring2012} cannot be directly applied to our work.

In this paper, we focus on developing and analyzing Riemannian optimization approaches for solving the minimization problem \eqref{eq:minEonN} to obtain  1-saddles of the functional $E$ under the infinite-dimensional setting. 
The main contributions of this paper are three aspects. (i) For a generic functional $E$, the basic geometric settings of the Nehari manifold $\mathcal{N}$ is presented and the local minimization characterization on $\mathcal{N}$ \eqref{Nehari} for 1-saddles is established. (ii) A general framework of Riemannian optimization algorithms on the Nehari manifold $\mathcal{N}$ \eqref{Nehari}, named as the {\em Nehari manifold optimization method} (NMOM), is introduced for solving the problem~\eqref{eq:minEonN} with the flexibility on the selection of the {\em retraction}, the tangential {\em descent direction} and the {\em step-size search rule}. 
Specifically, a practical {\em Nehari retraction} is proposed by making full use of the geometry of the Nehari manifold $\mathcal{N}$ \eqref{Nehari}. 
(iii) The global convergence of the proposed NMOM is established based on a nonmonotone step-size search rule for general (strongly) gradient-related descent directions. In our analysis, a delicate weak convergence technique is developed, and an easily verifiable condition for the retraction weaker than the Lipschitz continuity is proposed to address the challenges in the infinite-dimensional setting. 

This paper is organized as follows.  Section~\ref{section 2} establishes the local minimization characterization for 1-saddles. Based on this, the algorithm framework of the NMOM is then introduced in Section~\ref{section 3}. The global convergence of the algorithm is provided in Section~\ref{sec:Convergence}. Section~\ref{Numerical experiments} presents some applications of the NMOM to compute unstable ground states for a class of semilinear elliptic PDEs. In Section~\ref{sec:symmetry-breaking}, the symmetry-breaking phenomenon of the H\'enon equation is explored numerically. Finally, some conclusions and discussions are drawn in Section~\ref{section:conclusion}.

\vspace{.5ex}
{\bf Notations.}
Throughout this paper, denote $(\cdot,\cdot)_H$ by the inner product in $H$ and $\|\cdot\|_H = (\cdot,\cdot)_H^{1/2}$ the induced $H$-norm. The $H$-gradient of a $C^1$ functional $F$ at $u \in H$, denoted by $\nabla F(u)$, is the Riesz representer of the Fr\'echet derivative $F'(u)$ in $H$, i.e.,
\begin{align}\label{eq:Hgrad-def}
(\nabla F(u), \phi)_H = \langle F'(u), \phi\rangle, \quad \forall\; \phi \in H.
\end{align}
By the Riesz representation theorem, \!$\nabla F(u)$ is well-defined and $\|\nabla F(u)\|_H \!=\! \|F'(u)\|_{H^*}$.

\section{Variational characterization of 1-saddles via Nehari manifold}\label{section 2}
In this section, under the assumptions (A1)-(A3), for  a generic functional $E:H\to\mathbb{R}$, we investigate the geometry of the associated Nehari manifold $\mathcal{N}$ \eqref{Nehari}. The variational characterization for 1-saddles of $E$ in $H$ is established via the minimization on the Nehari manifold $\mathcal{N}$ \eqref{Nehari}, and the existence of the ground-state critical point (i.e., the global minimizer on $\mathcal{N}$ \eqref{Nehari}) is also provided. 

Introducing the functional $G(u) = \langle E'(u),u\rangle$ for all $u\in H $, the Nehari manifold $\mathcal{N}$ \eqref{Nehari} can be expressed as a level set of $G$, i.e.,
\begin{align}
\mathcal{N} = \{u \in H\backslash\{0\}: G(u)= 0\}.
\end{align}
Under the assumptions (A1)-(A3), the functional $G$ is of $C^1$ and satisfies $\langle G'(u),u\rangle = \langle E''(u)u,u \rangle <0$, $\forall \;u\in \mathcal{N}$. Hence, $G'(u)\neq0$, $\forall \;u\in\mathcal{N}$. Recall that a closed subset $ \mathcal{M} \subset H $ is closed $C^m$ submanifold ($m\geq1$) of $H$ if the set of charts in $ \mathcal{M}$ which are restrictions of charts for $H$ form an atlas for $\mathcal{M}$ of class $C^m$ \cite{1995differential,Palais1963}. It can be verified that the Nehari manifold $\mathcal{N}$ is a $C^1$ submanifold, as stated in the following lemma, which is essentially known in the nonlinear analysis~\cite{0Nonlinear}. We give the proof here for the completeness of the paper and the readers' convenience. 

\begin{lemma}\label{lem:Nehari-submanifold}
Under the assumptions (A1)-(A3), the Nehari manifold $\mathcal{N}$ is a closed $C^1$ submanifold of $H$.
\end{lemma}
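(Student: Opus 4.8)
The plan is to realize $\mathcal{N}$ as a regular level set of the auxiliary functional $G(u)=\langle E'(u),u\rangle$ on the open set $H\backslash\{0\}$ and then invoke the implicit function theorem (equivalently, the regular-value/submersion theorem) for maps between Hilbert spaces; closedness will be handled separately using (A3). First I would confirm that $G\in C^1(H,\mathbb{R})$ and compute its derivative. Since $E\in C^2(H,\mathbb{R})$ by (A1), the map $u\mapsto E'(u)$ is $C^1$, and differentiating the pairing by the product rule yields
\[
\langle G'(u),\phi\rangle=\langle E''(u)\phi,u\rangle+\langle E'(u),\phi\rangle,\qquad \forall\,\phi\in H,
\]
so $G$ is genuinely $C^1$, as already noted in the excerpt.

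The decisive step is to check that $0$ is a regular value of $G$ on $H\backslash\{0\}$, i.e. that $G'(u)$ is a surjective bounded functional with split kernel at every $u\in\mathcal{N}$. Evaluating the derivative at $\phi=u$ and using $G(u)=0$ on $\mathcal{N}$,
\[
\langle G'(u),u\rangle=\langle E''(u)u,u\rangle+\langle E'(u),u\rangle=\langle E''(u)u,u\rangle<0
\]
by (A2), so $G'(u)\neq 0$ and hence $G'(u)\colon H\to\mathbb{R}$ is onto. Because the codomain $\mathbb{R}$ is one-dimensional, $\ker G'(u)$ is a closed hyperplane and it splits: the identity
\[
h=\Big(h-\tfrac{\langle G'(u),h\rangle}{\langle G'(u),u\rangle}\,u\Big)+\tfrac{\langle G'(u),h\rangle}{\langle G'(u),u\rangle}\,u
\]
exhibits $H=\ker G'(u)\oplus\mathrm{span}\{u\}$ as a topological direct sum, the two projections being bounded precisely because $\langle G'(u),u\rangle\neq 0$. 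Thus $G$ is a submersion along $\mathcal{N}$, and the implicit function theorem supplies, near each $u\in\mathcal{N}$, a $C^1$ chart straightening $G^{-1}(0)$ onto $\ker G'(u)$ — which is exactly the assertion that $\mathcal{N}$ is a $C^1$ submanifold of $H$.

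For closedness I would use (A3). If $u_n\in\mathcal{N}$ with $u_n\to u$ in $H$, then $\|u\|_H=\lim_n\|u_n\|_H\geq\sigma_0>0$, so $u\neq 0$ and the limit remains in the domain $H\backslash\{0\}$; continuity of $G$ then gives $G(u)=\lim_n G(u_n)=0$, hence $u\in\mathcal{N}$. The uniform lower bound in (A3) is exactly what prevents a sequence in $\mathcal{N}$ from escaping to the excluded point $0$, the only place where the level-set argument could break down, and it is what upgrades ``submanifold of $H\backslash\{0\}$'' to ``closed submanifold of $H$''.

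I do not anticipate a serious obstacle. The feature that usually complicates the regular-value theorem in infinite dimensions — verifying that the kernel of the submersion admits a bounded complement — is trivial here because $G$ takes values in the one-dimensional space $\mathbb{R}$, so surjectivity of $G'(u)$ reduces to the single scalar inequality (A2) and the splitting of $\ker G'(u)$ is automatic. The only genuinely hypothesis-dependent inputs are (A2), guaranteeing regularity at every point of $\mathcal{N}$, and (A3), guaranteeing closedness; this is presumably why the authors deem the detailed verification routine and defer to \cite[Section~6.3]{0Nonlinear}.
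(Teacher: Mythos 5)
Your argument is correct and follows exactly the route the paper sets up before stating the lemma: it computes $\langle G'(u),u\rangle=\langle E''(u)u,u\rangle<0$ on $\mathcal{N}$, concludes $G'(u)\neq0$, and invokes the regular-value/implicit-function theorem (the proof itself is deferred to \cite[Section~6.3]{0Nonlinear}), while closedness via (A3) is precisely the observation in Remark~\ref{rmk:A1-A3}. Your explicit verification that $\ker G'(u)$ splits and your limit argument for closedness are the routine details the authors chose to omit.
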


\begin{proof} 
Under the assumption (A3), the Nehari manifold $\mathcal{N}$ is closed in $H$. 
For every $u\in\mathcal{N}$, referring to~\cite[Section~6.3]{0Nonlinear}, denote $X_u=\{\xi\in H\, :\, (\nabla G(u),\xi)_H=0 \}$ and define 
\[\psi(v)= v - u - \langle
G'(u), v-u\rangle\frac{\nabla G(u)}{\|\nabla G(u)\|_H^2}  + G(v)  \frac{\nabla G(u)}{\|\nabla G(u)\|_H^2},\quad v\in H\backslash\{0\}. \] 
By the assumptions (A1)-(A2), $\psi$ is a $C^1$ mapping from $H\backslash\{0\}$ to $H$. Clearly, $\langle G'(u),\psi(v)\rangle=G(v)$, and therefore
\begin{align}\label{eq:psi-v-N}
\psi(v)\in X_u\quad \Longleftrightarrow \quad v\in \mathcal{N}. 
\end{align}
Moreover, $\psi(u) = 0$ and $\psi'(u) = \mathrm{Id}$, then $\psi$ is locally invertible at  $u$ and is a $C^1$-diffeomorphism between a neighborhood $U \subset H\backslash\{0\}$ of $u$ (which can be taken to be far from 0 by the assumption (A3)) and a neighborhood $V \subset H$ of $0$. Note that the orthogonal direct sum decomposition $H=X_u\oplus\text{span}\{\nabla G(u)\}$. One can take $\tilde{V} = V_1 \times V_2\subset V$ for some open neighborhoods $V_1 \subset  X_u$ and $V_2 \subset \text{span}\{\nabla G(u)\}$ of $0$. Then $\tilde{U}:=\psi^{-1}(\tilde{V})\subset U$ is also open in $H$, and $(\tilde{U},\psi)$ gives a chart at $u$ for $H$. By \eqref{eq:psi-v-N}, $\psi$ satisfies $\psi(\tilde{U} \cap \mathcal{N})= \psi(\tilde{U})\cap X_u = V_1$. Then the restriction of $\psi$ on $\mathcal{N}$ induces a bijection $\psi_1:\tilde{U} \cap \mathcal{N}\to V_1$. Since $\psi:\tilde{U}\to \tilde{V}$ is a $C^1$-diffeomorphism, the collection of pairs $(\tilde{U} \cap \mathcal{N},\psi_1)$ obtained in the above manner forms an atlas for $\mathcal{N}$, of class $C^1$ \cite[II, Lemma~2.1]{1995differential}. According to the definition, the closed subset $\mathcal{N}$ of $H$ is a closed $C^1$ submanifold of $H$. 
\end{proof}

The {\em tangent space} of the Nehari manifold $\mathcal{N}$ at $u\in\mathcal{N}$ can be expressed as
\begin{align}\label{eq:TuN}
 T_u\mathcal{N} %= \text{ker}\left(G'(u)\right) 
 = \{\xi \in H : \langle G'(u),\xi\rangle = 0\}
  =\{\xi \in H: \left(\nabla G(u), \xi\right)_H = 0\},
\end{align}
which is a closed, linear subspace of $H$ of codimension one (hence a Hilbert space). For each tangent vector $\xi \in T_u \mathcal{N}$, there is at least one $C^1$ curve $\gamma:[a,b]\to\mathcal{N}$ with $a<0<b$, s.t. $\gamma(0)=u$ and $\gamma'(0)=\xi$. Such a curve $\gamma$ is said to realize the tangent vector $\xi$. As a submanifold of $H$, the Nehari manifold $\mathcal{N}$ naturally admits a Riemannian structure with $(\cdot,\cdot)_H$ as the {\em Riemannian metric}. More precisely, $T_u\mathcal{N}$ is naturally equipped with the restriction of $(\cdot,\cdot)_H$ on $T_u\mathcal{N}\times T_u\mathcal{N}$ as its inner product. Based on these, the Riemannian gradient of $E$ on the Nehari manifold $\mathcal{N}$ is defined as follows \cite{0Nonlinear}, which is the steepest ascent direction in the tangent space and a central concept for considering critical points of $E$ on the Nehari manifold $\mathcal{N}$.

\begin{definition}[Riemannian gradient]\label{def:Riemannian-gradient}
Under the assumptions (A1)-(A3), for $u\in\mathcal{N}$, the {\em Riemannian gradient} of $E$ at $u$, denoted by $\nabla_{\mathcal{N}}E(u)$, is the unique element in the tangent space $T_u \mathcal{N}$ s.t.
\begin{align}\label{eq:Rgrad-def}
(\nabla_{\mathcal{N}}E(u) , \xi)_{H}=\langle E'(u),\xi \rangle, \quad \forall\; \xi \in T_{u} \mathcal{N}.
\end{align}
\end{definition}

From the definition, the Riemannian gradient $\nabla_{\mathcal{N}}E(u)$ is exactly the $H$-orthogonal projection of the $H$-gradient $\nabla E(u)\in H$ onto the tangent space $T_{u} \mathcal{N}$. Let $P_u:H\to T_u\mathcal{N}$ be the $H$-orthogonal projection at $u\in\mathcal{N}$. Direct computations yield that
\begin{align}\label{Projection}
P_u\phi = \phi - \frac{(\nabla G(u),\phi)_H}{\|\nabla G(u)\|_H^2}\nabla G(u), \quad \forall\; \phi \in H.
\end{align}
Therefore, the Riemannian gradient $\nabla_{\mathcal{N}}E(u)$ at $u\in\mathcal{N}$ can be addressed as 
\begin{equation}\label{eq:Rgrad=ProjHgrad}	
\nabla_{\mathcal{N}}E(u)=P_u(\nabla E(u))=\nabla E(u)-\frac{(\nabla G(u),\nabla E(u))_{H}}{\|\nabla G(u)\|_H^2}\nabla G(u),\quad u\in\mathcal{N}. 
\end{equation}

Based on the connection between $\nabla_{\mathcal{N}}E(u)$ and $\nabla E(u)$, we can draw a significant conclusion in the following lemma that the Nehari manifold $\mathcal{N}$ is a {\em natural constraint} in the sense that all nontrivial critical points of $E$ in $H$ are the same as critical points on $\mathcal{N}$. Hereafter, a critical point on $\mathcal{N}$ is referred to a point on $\mathcal{N}$ at which the Riemannian gradient vanishes.
	\begin{lemma}\label{lem:naturalcons}
		Under the assumptions (A1)-(A3), the followings are equivalent to each other:
      (i) $u\in H\backslash\{0\}$ with $\nabla E(u)=0$; 
      (ii) $u\in \mathcal{N}$ with $\nabla_{\mathcal{N}} E(u) = 0$.
	\end{lemma}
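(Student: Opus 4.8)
The plan is to read off everything from the explicit projection formula \eqref{eq:Rgrad=ProjHgrad}, which expresses $\nabla_{\mathcal{N}}E(u)$ as the $H$-orthogonal projection of $\nabla E(u)$ onto $T_u\mathcal{N}$, together with the nondegeneracy supplied by (A2). I would prove the two implications separately.

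For (i) $\Rightarrow$ (ii), suppose $u \in H\backslash\{0\}$ with $\nabla E(u)=0$, equivalently $E'(u)=0$. Then $G(u)=\langle E'(u),u\rangle = 0$, and since $u\neq 0$ this places $u\in\mathcal{N}$. Applying the projection $P_u$ to the zero vector, or directly substituting $\nabla E(u)=0$ into \eqref{eq:Rgrad=ProjHgrad}, yields $\nabla_{\mathcal{N}}E(u)=P_u(\nabla E(u))=0$, so (ii) holds. This direction is immediate once membership in $\mathcal{N}$ is observed.

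For the converse (ii) $\Rightarrow$ (i), suppose $u\in\mathcal{N}$ with $\nabla_{\mathcal{N}}E(u)=0$. Reading \eqref{eq:Rgrad=ProjHgrad} from right to left shows that $\nabla E(u)$ must be a scalar multiple of $\nabla G(u)$, namely $\nabla E(u)=\lambda\,\nabla G(u)$ with $\lambda = (\nabla G(u),\nabla E(u))_H/\|\nabla G(u)\|_H^2$; this is the usual Lagrange-multiplier picture for the level-set constraint $G=0$. To force $\lambda=0$ I would pair this identity with $u$ itself. On the one hand, since $u\in\mathcal{N}$ we have $(\nabla E(u),u)_H = \langle E'(u),u\rangle = G(u) = 0$. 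On the other hand, $(\nabla G(u),u)_H = \langle G'(u),u\rangle = \langle E''(u)u,u\rangle$, which is strictly negative by (A2). Taking the $H$-inner product of $\nabla E(u)=\lambda\,\nabla G(u)$ with $u$ therefore gives $0 = \lambda\,\langle E''(u)u,u\rangle$, and since $\langle E''(u)u,u\rangle\neq 0$ we conclude $\lambda=0$, hence $\nabla E(u)=0$. As $u\in\mathcal{N}\subset H\backslash\{0\}$, statement (i) follows.

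The one genuinely substantive step, and the only place where more than bookkeeping is used, is the appeal to (A2) to guarantee $\langle E''(u)u,u\rangle\neq 0$: this is precisely the transversality of the ray $\{su:s>0\}$ to $\mathcal{N}$ that makes the multiplier vanish and turns $\mathcal{N}$ into a natural constraint. Without this nondegeneracy, a spurious critical point of $E|_{\mathcal{N}}$ with $\nabla E(u)=\lambda\nabla G(u)\neq 0$ could in principle occur. I expect the rest of the argument to be routine verification of the gradient identities $(\nabla E(u),u)_H=G(u)$ and $(\nabla G(u),u)_H=\langle E''(u)u,u\rangle$ on $\mathcal{N}$, both of which were already recorded in the discussion preceding Lemma~\ref{lem:Nehari-submanifold}.
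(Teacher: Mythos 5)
Your proof is correct and follows essentially the same route as the paper's: the forward direction is read off the projection formula \eqref{eq:Rgrad=ProjHgrad}, and the converse uses the Lagrange-multiplier identity $\nabla E(u)=\lambda\nabla G(u)$ paired with $u$, with (A2) forcing $\lambda=0$ via $(\nabla G(u),u)_H=\langle E''(u)u,u\rangle<0$ and $(\nabla E(u),u)_H=0$ on $\mathcal{N}$. No gaps.
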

	\begin{proof}
The proof ``(i)~$\Rightarrow$~(ii)" follows immediately from the definition of $\mathcal{N}$ and the expression of $\nabla_{\mathcal{N}}E(u)$ in \eqref{eq:Rgrad=ProjHgrad}. To prove ``(ii)~$\Rightarrow$~(i)", suppose $u\in\mathcal{N}$ with $\nabla_{\mathcal{N}}E(u) =0$. It implies that $\nabla E(u) = \alpha\nabla G(u)$ for some $\alpha \in \mathbb{R}$. Taking $H$ inner products with $u$ arrives at $(\nabla E(u),u)_{H} = \alpha(\nabla G(u),u)_{H}$. Together with the facts that $(\nabla G(u),u)_{H} = \langle E''(u)u,u\rangle < 0$ and $(\nabla E(u),u)_{H}=\langle E'(u),u\rangle=0$ for all  $u\in\mathcal{N}$, we have $\alpha = 0$ and $\nabla E(u) = 0$. Obviously, $u\in\mathcal{N}\subset H\backslash\{0\}$. (i) is obtained.
\end{proof}

Furthermore, we can establish connections between local minimizers on the Nehari manifold $\mathcal{N}$ and 1-saddles in $H$. This actually provides a variational characterization for 1-saddles of  $E$ in $H$. 

\begin{theorem}\label{theorem: 2.1}
Under the assumptions (A1)-(A3), all local minimizers of $E$ on the Nehari manifold $\mathcal{N}$ are 1-saddles of $E$ in $H$, and all nontrivial and nondegenerate 1-saddles of $E$ in $H$ are strict local minimizers of $E$ on the Nehari manifold $\mathcal{N}$.
\end{theorem}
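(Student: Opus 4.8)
The statement splits into two implications, and throughout I would fix a candidate point $u^*$ and represent the Hessian by the bounded self-adjoint operator $L\colon H\to H$ defined through $(L\phi,\psi)_H=\langle E''(u^*)\phi,\psi\rangle$. The common starting point is Lemma~\ref{lem:naturalcons}: a local minimizer of $E$ on $\mathcal N$ is a critical point on $\mathcal N$, hence $\nabla E(u^*)=0$; conversely a nontrivial critical point in $H$ lies on $\mathcal N$ and is critical there. In either case $E'(u^*)=0$, so $G'(u^*)$ reduces to the Riesz representer of $E''(u^*)u^*$, i.e. $\nabla G(u^*)=Lu^*$, and consequently $T_{u^*}\mathcal N=(Lu^*)^{\perp}$ by \eqref{eq:TuN}. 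Moreover $(Lu^*,u^*)_H=\langle E''(u^*)u^*,u^*\rangle<0$ by (A2), so $u^*\notin T_{u^*}\mathcal N$ and $H=T_{u^*}\mathcal N\oplus\mathbb R u^*$.

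The decisive technical device for both directions is a $C^2$ local model of $E|_{\mathcal N}$ around $u^*$. I would parametrize $\mathcal N$ near $u^*$ by the radial retraction $\psi(\xi)=s_{u^*+\xi}(u^*+\xi)$ for small $\xi\in T_{u^*}\mathcal N$, where $s_{w}>0$ is the unique scalar from (A1); the implicit function theorem (using $\langle E''(u)u,u\rangle<0$ on $\mathcal N$) makes $w\mapsto s_w$ of class $C^1$, so $\psi$ is a $C^1$ chart with $\psi(0)=u^*$ and $D\psi(0)=\mathrm{id}$. Although $\mathcal N$ is only $C^1$ (Lemma~\ref{lem:Nehari-submanifold}), the reduced functional $\Psi(\xi)=E(\psi(\xi))=\max_{s>0}E(s(u^*+\xi))$ is in fact $C^2$: by the stationarity of the inner maximization one gets the clean first derivative $\Psi'(\xi)[\eta]=s_{u^*+\xi}\langle E'(\psi(\xi)),\eta\rangle$, which is manifestly $C^1$ in $\xi$. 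Differentiating once more at $\xi=0$, the term carrying the unknown $s'(0)$ is multiplied by $(Lu^*,\eta)_H$ and drops out precisely because $\eta\in T_{u^*}\mathcal N=(Lu^*)^{\perp}$, leaving $\Psi''(0)[\eta,\eta]=(L\eta,\eta)_H$. Thus the second-order behaviour of $E$ along $\mathcal N$ at $u^*$ is governed exactly by $L$ restricted to $T_{u^*}\mathcal N$.

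For the first implication, a local minimizer gives $\Psi''(0)\ge 0$, i.e. $(L\xi,\xi)_H\ge 0$ for all $\xi\in T_{u^*}\mathcal N$. Together with $(Lu^*,u^*)_H<0$ this pins down the Morse index: the direction $u^*$ shows $\mathrm{MI}\ge 1$, while any subspace $W$ with $\dim W\ge 2$ meets the codimension-one space $T_{u^*}\mathcal N$ in a nonzero vector on which $L$ is nonnegative, so $L$ cannot be negative definite on $W$; hence $\mathrm{MI}\le 1$, and $u^*$ is a $1$-saddle (it is neither a local maximum, by the nonnegative tangent directions, nor a local minimum, by the negative direction $u^*$).

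For the converse, nondegeneracy makes $L$ invertible, so $0\notin\sigma(L)$ and $\mathrm{MI}=1$ yields an orthogonal splitting $H=H^-\oplus H^+$ with $H^-=\mathbb R e$, $Le=-\lambda e$ for some $\lambda\ge c>0$, and $L|_{H^+}\ge c$. I would then show $L$ is coercive on $T_{u^*}\mathcal N$: writing $\xi=\alpha e+\eta$ and $u^*=\beta e+w$ with $\eta,w\in H^+$, the constraint $(Lu^*,\xi)_H=0$ expresses $\alpha$ through $\eta$, while (A2) forces $\lambda\beta^2>(Lw,w)_H$; feeding this into $(L\xi,\xi)_H$ and applying the Cauchy--Schwarz inequality for the genuine inner product $(L\cdot,\cdot)_H$ on $H^+$ gives $(L\xi,\xi)_H\ge\delta\|\xi\|_H^2$ for some $\delta>0$. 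Since $\Psi''(0)=L|_{T_{u^*}\mathcal N}$ is coercive and $\Psi\in C^2$, the second-order Taylor expansion of $\Psi$ at $0$ shows $u^*$ is a strict local minimizer of $E$ on $\mathcal N$. The main obstacles are exactly the two non-routine points above: securing the $C^2$ regularity of the reduced model $\Psi$ despite $\mathcal N$ being only $C^1$ (handled by the envelope identity and the cancellation of the $s'(0)$ term on the tangent space), and, in the converse, converting the purely index-theoretic hypotheses (nondegeneracy and $\mathrm{MI}=1$) into the \emph{uniform} positivity of $L$ on $T_{u^*}\mathcal N$ needed for a strict minimizer in the infinite-dimensional setting.
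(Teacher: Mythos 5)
Your proof is correct and follows the same overall skeleton as the paper's: reduce to a critical point via Lemma~\ref{lem:naturalcons}, Taylor-expand $E$ to second order along $\mathcal{N}$, and exploit the codimension-one structure of $T_{u^*}\mathcal{N}$ together with (A2) to pin the Morse index at one. The differences are in execution, and they are worth noting. For the forward direction the paper works with arbitrary $C^1$ curves realizing tangent vectors, while you build the specific reduced functional $\Psi(\xi)=E(s_{u^*+\xi}(u^*+\xi))$ and verify its $C^2$ regularity via the envelope identity and the cancellation of the $s'(0)$-term on $(Lu^*)^{\perp}$ (the same cancellation the paper records when proving the Nehari retraction is a retraction); both yield $\langle E''(u_*)\xi,\xi\rangle\ge 0$ on the tangent space and the argument is essentially identical. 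The substantive divergence is in the converse. The paper asserts the pointwise positivity \eqref{eq:MI-saddle} and then concludes strict local minimality from the expansion $E(w)=E(u_0)+\tfrac12\langle E''(u_0)\xi,\xi\rangle+o(\|\xi\|_H^2)$; in an infinite-dimensional $H$, pointwise positivity of the quadratic form does not by itself dominate the $o(\|\xi\|_H^2)$ remainder, so this step implicitly uses a uniform lower bound. You make exactly that bound explicit: from nondegeneracy and $\mathrm{MI}=1$ you get a spectral splitting $H=\mathbb{R}e\oplus H^+$ with a gap, and the Cauchy--Schwarz argument for the inner product $(L\cdot,\cdot)_H$ on $H^+$, combined with $(Lu_0,u_0)_H<0$ forcing $\lambda\beta^2>(Lw,w)_H$, yields coercivity $(L\xi,\xi)_H\ge\delta\|\xi\|_H^2$ on $T_{u_0}\mathcal{N}$. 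This is the right way to close the Taylor-expansion step and is a genuine strengthening of the written proof rather than a redundancy; the rest of your argument (the dimension count giving $\mathrm{MI}\le 1$, and $\beta\neq 0$ so the constraint solves for $\alpha$) checks out.
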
 

\begin{proof}
Let $u_*\in \mathcal{N}$ be a local minimizer of $E$ on $\mathcal{N}$. Then $\nabla_{\mathcal{N}}E(u_*) = 0$ and $E'(u_*) = 0$ by Lemma~\ref{lem:naturalcons}. For every tangent vector $\xi \in T_{u_*}\mathcal{N}$ at $u_*$ with $\gamma(s)$ a $C^1$ curve realizing it, the Taylor expansion of $E$ at $u_*$ results in
\begin{align}\label{eq:taylor_expan_of_E}
E(\gamma(s)) 
&= E(u_*) 
+\frac{1}{2}\left\langle E''(u_*)(\gamma(s)-u_*),\gamma(s)-u_*\right\rangle + o(\|\gamma(s)-u_*\|_H^2) \\
&=E\left(u_*\right)+\frac{s^2}{2}\left\langle E^{\prime \prime}\left(u_*\right) \xi, \xi\right\rangle+o\left(s^2\|\xi\|_H^2\right). \nonumber
\end{align}
Due to $u_*$ minimizing $E$ on $\mathcal{N}$, $E(\gamma(s)) \geq E(u_*)$ when $|s|$ is small enough. Taking the limit $s\to0$ yields $\langle E''(u_*)\xi,\xi\rangle \geq 0$. The arbitrariness of $\xi \in T_{u_*}\mathcal{N}$ implies that the linear operator $E''(u_*)$ is positive semi-definite on the tangent subspace $T_{u_*}\mathcal{N}$. This means $\mathrm{MI}(u_*)\leq 1$ since $T_{u_*}\mathcal{N}$ is a subspace of $H$ of codimension one. Further, the assumption $\langle E''(u_*)u_*,u_* \rangle <0$ shows $\mathrm{MI}(u_*) \geq 1$. In conclusion, $\mathrm{MI}(u_*) =1$. 
The first part is proved.

The rest is to verify that each nontrivial and nondegenerate 1-saddle of $E$ in $H$, say $u_0\in H\backslash\{0\}$, must be a strict local minimizer on the Nehari manifold $\mathcal{N}$. From $\mathrm{MI}(u_0) =1$, $\langle E''(u_0)u_0,u_0 \rangle<0$ and the nondegeneracy of $u_0$, we conclude that
\begin{align} \label{eq:MI-saddle}
\langle E''(u_0)\xi,\xi\rangle>0, \quad \forall\; \xi\in T_{u_0}\mathcal{N}\backslash\{0\}. 
\end{align}
Moreover, note that $\mathcal{N}$ is locally $C^1$-diffeomorphic to the tangent space $T_{u_0}\mathcal{N}$ and admits a local parametrization in a neighborhood $U\subset\mathcal{N}$ of $u_0$ given by $(V,\varphi)$, where $V\subset T_{u_0}\mathcal{N}$ is a neighborhood of $0\in T_{u_0}\mathcal{N}$ and $\varphi:V\to U$ is a $C^1$-diffeomorphism with $\varphi(0)=u_0$ and $\varphi'(0)=\mathrm{Id}$. For each $w\in U\backslash\{u_0\}$, there is a unique $\xi\in V\backslash\{0\}$ s.t. $w=\varphi(\xi)=\varphi(0)+\varphi'(0)\xi+o(\|\xi\|_H)=u_0+\xi+o(\|\xi\|_H)$. Applying the Taylor expansion of $E$ at $u_0$, one obtains
\begin{align*}
E(w)
&= E(u_0)+\frac{1}{2}\left\langle E''(u_0)(w-u_0),w-u_0\right\rangle + o\left(\|w-u_0\|_H^2\right) \\
&= E(u_0)+\frac{1}{2}\left\langle E''(u_0)\xi,\xi\right\rangle + o\left(\|\xi\|_H^2\right).
\end{align*} 
Paying attention to $\left\langle E''(u_0)\xi,\xi\right\rangle>0$ and $\|\xi\|_H=O(\|w-u_0\|_H)$, one can find a sufficiently small neighborhood $U_1\subset U\subset\mathcal{N}$ of $u_0$ s.t. $E(w)>E(u_0)$ for all $w\in U_1\backslash\{u_0\}$. Consequently, $u_0$ is a strict local minimizer of $E$ on $\mathcal{N}$.
\end{proof}

\begin{remark}
We mention a classical result for the variational energy functional associated to a special kind of semilinear elliptic equations established in 1989 in \cite{Ni1989}, where the global minimizer on the Nehari manifold was proven to be a 1-saddle. Compared with that result, Theorem~\ref{theorem: 2.1} is concerned with local minimizers on $\mathcal{N}$ and a general functional $E$.
\end{remark}

Following the Palais-Smale (PS) condition \cite{PS1964BAMS,Rabinowitz1986} in the critical point theory, we introduce the following concept for the {\em PS condition on the Nehari manifold}, which is vital for the existence of minimizers on the Nehari manifold. It will be also useful for the convergence analysis of the proposed algorithm framework in Section~\ref{sec:Convergence}.

\begin{definition}[PS condition on the Nehari manifold]\label{PScondition_M}
Under the assumptions (A1)-(A3), $E$ is said to satisfy the PS condition on the Nehari manifold $\mathcal{N}$ if every sequence $\left\{u_{n}\right\} \subset \mathcal{N}$ s.t. $\left\{E\left(u_{n}\right)\right\}$ is bounded and $\|\nabla_{\mathcal{N}}E\left(u_{n}\right)\|_H \rightarrow 0$ has a convergent subsequence in the $H$-norm.
\end{definition}

The existence of the ground-state critical point (i.e., the global minimizer of $E$ on $\mathcal{N}$) is given in the following theorem. By Theorem~\ref{theorem: 2.1}, it is a sufficient condition for the existence of 1-saddles in $H$. The proof can be completed by the Ekeland's variational principle \cite{1974On}. % and is omitted for brevity. 

\begin{theorem}\label{thm:existence}
Under the assumptions (A1)-(A3), if $E$ satisfies the PS condition on the Nehari manifold $\mathcal{N}$, then there exists a ground-state critical point $u^* \in \mathcal{N}$, i.e., $E(u^*) = \inf_{u\in \mathcal{N}}E(u) $ and $E'(u^*) =0$. 
\end{theorem}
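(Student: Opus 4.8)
The plan is to produce a minimizing sequence that is simultaneously a Palais--Smale sequence on $\mathcal{N}$, and then invoke the PS condition to extract a convergent subsequence whose limit is the sought minimizer. First I would set $c := \inf_{u\in\mathcal{N}} E(u)$ and note that $c$ is finite: by the Remark following (A1)--(A3) one has $E(u)\ge E(0)$ for every $u\in\mathcal{N}$, so $E$ is bounded below on $\mathcal{N}$. Since $\mathcal{N}$ is a closed subset of $H$ (Lemma~\ref{lem:Nehari-submanifold}), equipping it with the induced distance $d(u,v)=\|u-v\|_H$ makes $(\mathcal{N},d)$ a complete metric space, so Ekeland's variational principle is applicable.

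The central step is to convert Ekeland's metric estimate into a Riemannian-gradient estimate. Applying the principle with parameter $1/n$ yields points $u_n\in\mathcal{N}$ with $E(u_n)\le c+\tfrac1n$ and
\[
E(w)\ge E(u_n)-\tfrac{1}{n}\|w-u_n\|_H,\qquad \forall\, w\in\mathcal{N}.
\]
To read off the gradient bound, I would fix $\xi\in T_{u_n}\mathcal{N}$ and a $C^1$ curve $\gamma:(-\delta,\delta)\to\mathcal{N}$ realizing it, i.e. $\gamma(0)=u_n$ and $\gamma'(0)=\xi$ (such a curve exists by the discussion of tangent vectors in Section~\ref{section 2}). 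Substituting $w=\gamma(t)$ into the Ekeland inequality, dividing by $t>0$, and letting $t\to0^+$ gives $\langle E'(u_n),\xi\rangle\ge -\tfrac1n\|\xi\|_H$, because $\|\gamma(t)-u_n\|_H/t\to\|\xi\|_H$. Replacing $\xi$ by $-\xi$ yields $|\langle E'(u_n),\xi\rangle|\le\tfrac1n\|\xi\|_H$ for all $\xi\in T_{u_n}\mathcal{N}$. Recalling $\langle E'(u_n),\xi\rangle=(\nabla_{\mathcal{N}}E(u_n),\xi)_H$ from \eqref{eq:Rgrad-def} and choosing $\xi=\nabla_{\mathcal{N}}E(u_n)$ then forces $\|\nabla_{\mathcal{N}}E(u_n)\|_H\le\tfrac1n\to0$.

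Finally, since $\{E(u_n)\}$ is bounded and $\|\nabla_{\mathcal{N}}E(u_n)\|_H\to0$, the PS condition on $\mathcal{N}$ delivers a subsequence $u_{n_k}\to u^*$ in the $H$-norm. Closedness of $\mathcal{N}$ gives $u^*\in\mathcal{N}$; continuity of $E$ gives $E(u^*)=\lim_k E(u_{n_k})=c=\inf_{\mathcal{N}}E$; and continuity of $\nabla_{\mathcal{N}}E$ (a consequence of $E\in C^2$ together with the formula \eqref{eq:Rgrad=ProjHgrad} and $\nabla G(u)\ne0$ on $\mathcal{N}$) gives $\nabla_{\mathcal{N}}E(u^*)=0$. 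By Lemma~\ref{lem:naturalcons} this is equivalent to $E'(u^*)=0$, so $u^*$ is a ground-state critical point.

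I expect the main obstacle to be the second step: rigorously passing from the purely metric Ekeland inequality to the bound on the Riemannian gradient. The argument hinges on the $C^1$ manifold structure --- specifically on the existence of curves realizing arbitrary tangent vectors and on the first-order expansion $\|\gamma(t)-u_n\|_H=t\|\xi\|_H+o(t)$ --- and one must ensure these estimates combine to an operator-norm bound uniform in $\xi$. The extraction of the convergent subsequence is then immediate from the PS hypothesis, which is precisely what that hypothesis is designed to supply, thereby bypassing any need for weak lower semicontinuity of $E$ on $\mathcal{N}$.
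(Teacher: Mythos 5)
Your argument is correct and follows exactly the route the paper indicates: the paper omits the proof of Theorem~\ref{thm:existence} but states that it is completed by Ekeland's variational principle, which is precisely your strategy of building a minimizing Palais--Smale sequence on the complete metric space $(\mathcal{N},\|\cdot\|_H)$ and invoking the PS condition on $\mathcal{N}$. The one step you flag as delicate---passing from the Ekeland inequality to $\|\nabla_{\mathcal{N}}E(u_n)\|_H\le 1/n$ via curves realizing tangent vectors---is fully supported by the paper's setup, since the Nehari retraction $\gamma(s)=\mathcal{R}_{u_n}(s\xi)$ supplies such a curve for every $\xi\in T_{u_n}\mathcal{N}$.
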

\begin{proof}
From Lemma~\ref{lem:Nehari-submanifold}, the Nehari manifold $\mathcal{N}$ is closed in $H$. Then, $\mathcal{N}$ is a complete metrical space w.r.t.~the distance induced by $\|\cdot\|_H$. In addition, $E$ is bounded from below on $\mathcal{N}$ by the assumptions (A1)-(A2). According to the Ekeland's variational principle \cite{1974On}, for any $\varepsilon>0$, there exist some points $u_{\varepsilon}\in\mathcal{N}$ s.t.
\begin{align}
\label{eq:Ekeland-Eu-eps2}
& E(u_{\varepsilon})\leq \inf_{u\in\mathcal{N}}E(u)+\varepsilon^2, 
 \end{align}
 and
 \begin{align}
\label{eq:Ekeland-EwEu}
& E(u)\geq E(u_{\varepsilon})-\varepsilon\|u-u_{\varepsilon}\|_H,\quad\forall \;u\in\mathcal{N}.
\end{align}
Recall that $\mathcal{N}$ is a $C^1$ submanifold of $H$ by Lemma~\ref{lem:Nehari-submanifold}. It admits a local parametrization in a neighborhood $U\subset\mathcal{N}$ of $u_{\varepsilon}$ given by $(V,\varphi)$, where $V\subset T_{u_{\varepsilon}}\mathcal{N}$ is a neighborhood of $0\in T_{u_{\varepsilon}}\mathcal{N}$ and $\varphi:V\to U$ is a $C^1$-diffeomorphism with $\varphi(0)={u_{\varepsilon}}$ and $\varphi'(0)=\mathrm{Id}$. Hence, for each $u\in U$, $\xi = \varphi^{-1}(u) \in V$ satisfies
\[ u=\varphi(\xi)=\varphi(0)+\varphi'(0)\xi+o(\|\xi\|_H)=u_{\varepsilon}+\xi+o(\|\xi\|_H). \]
Noticing the definition of the Riemannian gradient $\nabla_{\mathcal{N}}E(u_{\varepsilon})$, the Taylor expansion of $E$ at $u_{\varepsilon}$ states that
\begin{align*}
E(u)-E(u_{\varepsilon})
=\langle E'(u_{\varepsilon}),\xi\rangle+o(\|\xi\|_H)=(\nabla_{\mathcal{N}}E(u_{\varepsilon}),\xi)_H+o(\|\xi\|_H).
\end{align*}
Utilizing \eqref{eq:Ekeland-EwEu} and the local parametrization $(V,\varphi)$ in the neighborhood $U\subset\mathcal{N}$ of $u_{\varepsilon}$, one can derive
\begin{align}\label{eq:Ekeland-EwEu-xi}
(\nabla_{\mathcal{N}}E(u_{\varepsilon}),\xi)_H\geq -\varepsilon\|\xi\|_H+o(\|\xi\|_H),\quad \forall \;\xi\in V.
\end{align}
Taking $\xi=-s\nabla_{\mathcal{N}}E(u_{\varepsilon})\in V$ in \eqref{eq:Ekeland-EwEu-xi} and letting $s\to0^+$, it yields
\begin{align}\label{eq:Ekeland-DEu-eps}
\|\nabla_{\mathcal{N}}E(u_{\varepsilon})\|_H\leq\varepsilon.
\end{align}
Hence, in view of \eqref{eq:Ekeland-Eu-eps2} and \eqref{eq:Ekeland-DEu-eps}, taking $\varepsilon=1/n$, $n\in\mathbb{N}^+$, one can get a minimizing sequence $\{u_n\}\subset\mathcal{N}$ satisfying $E(u_n)\to\inf_{u\in \mathcal{N}}E(u)$ and $\nabla_{\mathcal{N}}E(u_n)\to0$ as $n\to\infty$. Due to the assumption that $E$ satisfies the PS condition on the Nehari manifold $\mathcal{N}$, the sequence $\{u_n\}$ contains a convergent subsequence. As a result, the existence of a point $u^*\in\mathcal{N}$ s.t. $E(u^*) = \inf_{u\in \mathcal{N}}E(u)$ and $\nabla_{\mathcal{N}}E(u^*)=0$ is guaranteed immediately. Furthermore, from Lemma~\ref{lem:naturalcons}, one has $E'(u^*) =0$. 
\end{proof}

Theorems \ref{theorem: 2.1} and \ref{thm:existence} establish the mathematical justification for finding 1-saddles via the minimization of $E$ on the associated Nehari manifold $\mathcal{N}$. As a result, the  1-saddles can be numerically grasped by suitable minimization algorithms on the Nehari manifold $\mathcal{N}$.

\section{Nehari manifold optimization method (NMOM)}\label{section 3}
In this section, we introduce the NMOM, a framework of Riemannian optimization to minimize $E$ on the Nehari manifold $\mathcal{N}$ for finding 1-saddles of $E$ in $H$. 

\subsection{Algorithm framework of the NMOM} 
We begin with some definitions for the iterative scheme of the NMOM. 
\begin{definition}[Descent direction]
Under the assumptions (A1)-(A3), a {\em descent direction} at $u\in \mathcal{N}$ is a tangent vector $\xi\in T_u\mathcal{N}$ satisfying $(\nabla_{\mathcal{N}}E(u), \xi)_H <0$. 
\end{definition}

Since the Nehari manifold $\mathcal{N}$ lacks linear algebraic structures, the standard linear iterative scheme of the line-search methods in vector spaces, i.e., $ u_{n+1} = u_n +\alpha_n d_n$, is infeasible, where $\alpha_n >0$ is the step-size and $d_n $ is the search direction. Therefore, the {\em retraction}, a vital concept for the construction of the iterative scheme in our algorithm framework, is introduced. Actually, it is an extension of classical retractions in the finite-dimensional Riemannian optimization \cite{2008Optimization,boumal2023intromanifolds} to guarantee each iterative point staying on the Nehari manifold $\mathcal{N}$.Denote the {\em tangent bundle} of the Nehari manifold $\mathcal{N}$ as $T\mathcal{N} := \bigcup_{u\in \mathcal{N}}\{(u,\xi)\left|\right.$ $ \xi \in T_u\mathcal{N}\}$, the retraction is defined as follows.

\begin{definition}[Retraction]\label{retraction definition}
Under the assumptions (A1)-(A3), a {\em retraction} on the Nehari manifold $\mathcal{N}$ is a $C^1$ mapping from $T\mathcal{N}$ to $\mathcal{N}$ s.t. for all $u\in\mathcal{N}$, the restriction of $\mathcal{R}$ on the tangent space ${T_u\mathcal{N}}$, denoted as $\mathcal{R}_{u}=\mathcal{R}(u,\cdot)$, satisfies 
\begin{enumerate}[(i)]
\item $\mathcal{R}_{u}\left(0\right)=u$;
\item $\left.\frac{d}{ds}\mathcal{R}_u(s\xi)\right|_{s=0}=\xi$, \; $\forall\;\xi\in T_u\mathcal{N}$. 
\end{enumerate}
\end{definition}

\begin{figure}[!ht]
\centering
\vspace{-3ex}
\includegraphics[width=0.6\textwidth,height=0.4\textwidth]{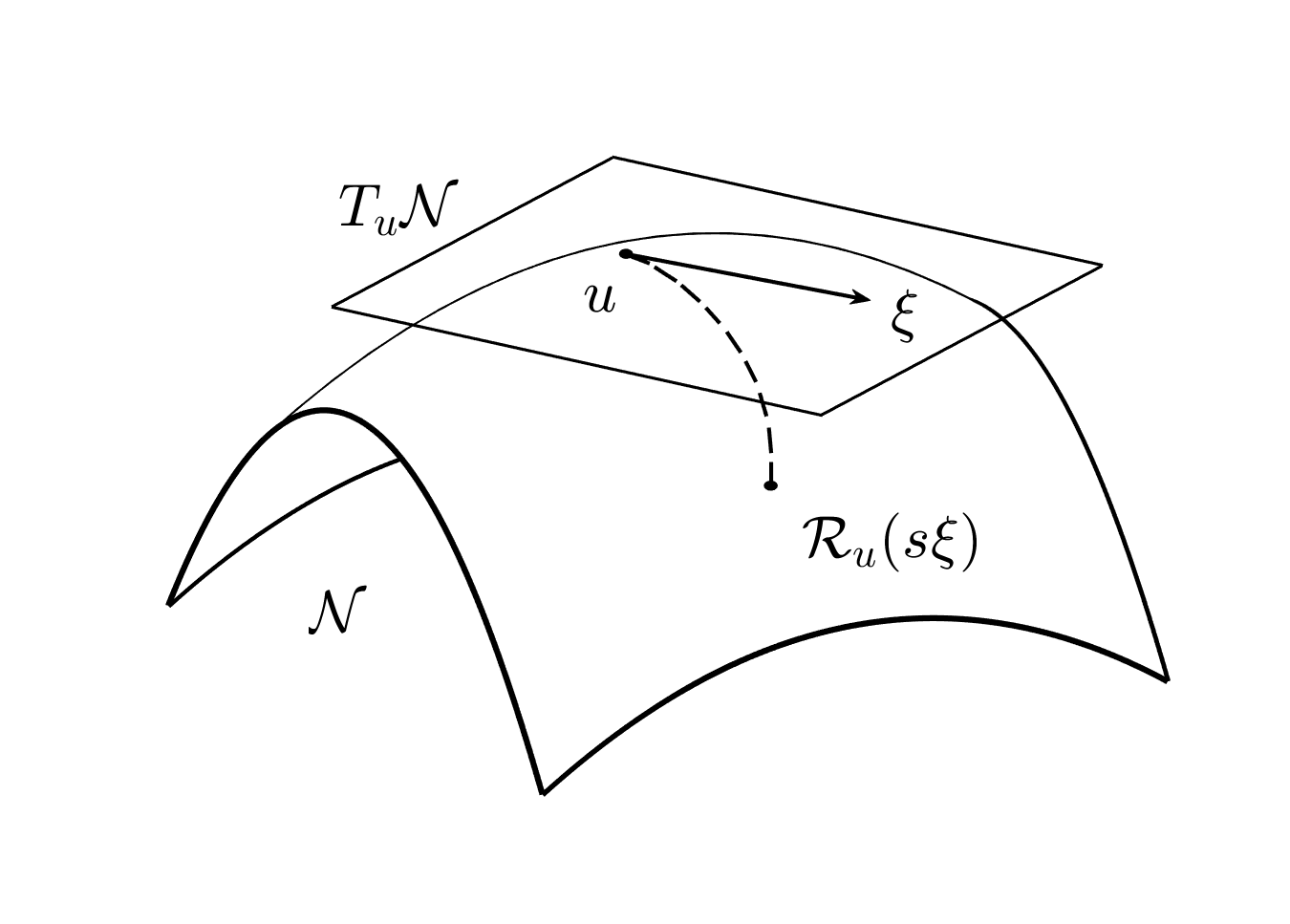}
\vspace{-5ex}
\caption{Illustration of a retraction $\mathcal{R}$ on the Nehari manifold.}\label{fig.Retraction_2}
\end{figure}

\begin{remark}
% \begin{enumerate}[(1)]
\setlength{\itemsep}{1pt}
\setlength{\parskip}{0pt}
\setlength{\parsep}{0pt}
By Definition~\ref{retraction definition}, every tangent vector $\xi\in T_u\mathcal{N}$ is realized by the curve $\gamma(s):=\mathcal{R}_u(s\xi)$. It allows one moving away from $u$ while always staying on the Nehari manifold $\mathcal{N}$ (see Figure~\ref{fig.Retraction_2}). In addition, conditions (i) and (ii) in Definition~\ref{retraction definition} ensure that $\mathcal{R}_u(s\xi)$ is close to $u$ with a linear approximation as $\mathcal{R}_u(s\xi)\approx u+s\xi$ when $\|s\xi\|_H$ is sufficiently small. These guarantee the stability and local linearity of the iteration based on a retraction. Moreover, the $C^1$ property of $\mathcal{R}$ means that there exists an $C^1$ extension $\hat{\mathcal{R}}$ defined on an open subset of $H \times H$ containing $T\mathcal{N}$, s.t.  $\hat{\mathcal{R}}\big|_{T\mathcal{N}} = \mathcal{R}$.
\end{remark}

Utilizing a retraction $\mathcal{R}$ on the Nehari manifold $\mathcal{N}$, the iterative scheme of the NOMM is expressed as
\begin{align}\label{eq:NMOM-IterativeScheme}
u_{n+1} = \mathcal{R}_{u_n}(\alpha_n\xi_n),\quad n\geq0, 
\end{align}
with $u_0\in\mathcal{N}$ an initial guess, $\alpha_n>0$ a step-size and $\xi_n \in T_{u_n}\mathcal{N}$ a 
descent direction at $u_n$. Then, we propose the algorithm framework for the NOMM in Algorithm~\ref{alg:NMOM}. 

\begin{algorithm}[!ht]
	\caption{Algorithm framework of the NMOM}\label{alg:NMOM}
 \begin{enumerate}[\bf Step~1.]
 \vspace{-\topsep}
\setlength{\itemsep}{1pt}
\setlength{\parskip}{0pt}
\setlength{\parsep}{0pt}
	\item Given the initial guess $u_0 \in \mathcal{N}$ and a precision tolerance $\varepsilon_{\mathrm{tol}}>0$, set $n=0$.
	\item If $\|\nabla_{\mathcal{N}}E(u_n)\|_H < \varepsilon_{\mathrm{tol}}$ (or other criterion is satisfied), then stop; otherwise, go to {\bf Step 3}.
	\item Compute a descent direction $\xi_n\in T_{u_n}\mathcal{N}$ at $u_n\in \mathcal{N}$.
	\item Find a step-size $\alpha_n>0$ by a certain step-size search strategy.
	\item Do the iterative scheme \eqref{eq:NMOM-IterativeScheme} with a certain retraction $\mathcal{R}$ to get $u_{n+1}$. Update $n:=n+1$, and then go to {\bf Step 2}.
\end{enumerate}
\vspace{-\topsep}
\end{algorithm}

We remark here that various suitable retractions, search directions and step-size search rules are allowed to implement Algorithm~\ref{alg:NMOM}. In the rest of this section, we discuss the practical retraction and step-size search rules.

\subsection{Nehari retraction}\label{sec:Nehari retraction}
In this subsection, we propose a practical retraction, called the {\em Nehari retraction}, by utilizing the geometric structure of the Nehari manifold $\mathcal{N}$. To illustrate this in details, we first present the following fact that there exists a $C^1$ mapping pulling all nonzero points in $H$ to the Nehari manifold $\mathcal{N}$.

\begin{lemma}\label{lem:sigma}
	Under the assumptions (A1)-(A3), for every $v\in H\backslash\{0\}$, there exists a unique $\rho(v)>0$ s.t. $\rho(v)v \in \mathcal{N}$. Furthermore, $\rho:v \mapsto \rho(v)$ is a $C^1$ mapping from $H\backslash\{0\}$ to $\mathbb{R}^+$.
\end{lemma}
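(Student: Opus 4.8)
The plan is to split the statement into two parts: first the pointwise existence and uniqueness of $\rho(v)$, then the $C^1$ regularity of the map $v \mapsto \rho(v)$. For the existence and uniqueness, I would reduce everything to assumption (A1). Fix $v \in H \backslash \{0\}$ and observe that $\rho(v)v \in \mathcal{N}$ means precisely $\langle E'(\rho(v)v), \rho(v)v\rangle = 0$, i.e. $G(\rho(v)v) = 0$ with $\rho(v) > 0$. Writing $\Phi(s) = E(sv)$, we have $\Phi'(s) = \langle E'(sv), v\rangle$, so $s\Phi'(s) = \langle E'(sv), sv\rangle = G(sv)$. Hence for $s > 0$, the condition $G(sv) = 0$ is equivalent to $\Phi'(s) = 0$, i.e. $s$ is a critical point of $\Phi$ on $(0,\infty)$. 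By assumption (A1), $\Phi$ has a unique critical point $s_v > 0$, and we simply set $\rho(v) := s_v$. This gives both existence and uniqueness immediately, and confirms consistency with the description $\mathcal{N} = \{s_v v : v \in H\backslash\{0\}\}$ in Remark 1.1.

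For the $C^1$ regularity, the natural tool is the implicit function theorem applied to $G$. Define the auxiliary function $g : (H\backslash\{0\}) \times (0,\infty) \to \mathbb{R}$ by $g(v,s) = G(sv) = \langle E'(sv), sv\rangle$. Since $E \in C^2(H,\mathbb{R})$ by (A1), $G$ is $C^1$ and therefore $g$ is $C^1$ jointly in $(v,s)$. By construction $g(v, \rho(v)) = 0$ for every $v$. To invoke the implicit function theorem and solve for $s$ as a $C^1$ function of $v$ near any fixed $v_0$, I need the partial derivative $\partial_s g(v_0, \rho(v_0))$ to be nonzero. The key computation is
\begin{align*}
\partial_s g(v,s) = \frac{d}{ds}\langle E'(sv), sv\rangle = \langle E''(sv)v, sv\rangle + \langle E'(sv), v\rangle.
\end{align*}
Evaluating at $s = \rho(v)$ and writing $u = \rho(v)v \in \mathcal{N}$, the second term vanishes because $\langle E'(u), v\rangle = \tfrac{1}{\rho(v)}\langle E'(u), u\rangle = 0$ on $\mathcal{N}$, while the first term equals $\tfrac{1}{\rho(v)}\langle E''(u)u, u\rangle$. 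By assumption (A2) this is strictly negative, so $\partial_s g(v, \rho(v)) \neq 0$.

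The implicit function theorem then yields, for each $v_0 \in H\backslash\{0\}$, a neighborhood and a unique $C^1$ function $s = \psi(v)$ solving $g(v, \psi(v)) = 0$ with $\psi(v_0) = \rho(v_0)$; by the uniqueness established in the first part, $\psi$ coincides with $\rho$ locally, so $\rho$ is $C^1$ near $v_0$. Since $v_0$ was arbitrary, $\rho \in C^1(H\backslash\{0\}, \mathbb{R}^+)$. I expect the main obstacle to be purely bookkeeping rather than conceptual: one must confirm that the infinite-dimensional implicit function theorem applies cleanly, which it does since $g$ maps into the scalar $\mathbb{R}$ and the derivative condition $\partial_s g \neq 0$ is exactly the nondegeneracy in the single scalar variable $s$. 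The only subtlety worth flagging is ensuring $\psi(v)$ stays strictly positive and that the local solution is genuinely the global one $\rho(v)$ — but this follows from the uniqueness in (A1), so no separate continuation argument is needed.
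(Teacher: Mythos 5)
Your proposal is correct and follows essentially the same route as the paper: existence and uniqueness come directly from (A1) by setting $\rho(v):=s_v$, and the $C^1$ regularity is obtained from the implicit function theorem applied to $(s,v)\mapsto G(sv)$, with the nondegeneracy $\partial_s G(sv)\big|_{s=s_v}=\tfrac{1}{s_v}\langle E''(s_vv)s_vv,s_vv\rangle<0$ supplied by (A2). Your version simply makes explicit the cancellation of the $\langle E'(sv),v\rangle$ term, which the paper absorbs into the identity $\langle G'(s_vv),v\rangle=\tfrac{1}{s_v}\langle E''(s_vv)s_vv,s_vv\rangle$.
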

\begin{proof}
The existence and uniqueness of $\rho(v)>0$ straightforwardly follow from (A1) by letting $\rho(v):=s_v$. Consider a $C^1$ mapping $F:\mathbb{R}^+\times(H\backslash\{0\})\to\mathbb{R}$ defined as $F(s,v)=G(sv)$. Under the assumptions (A1)-(A3), for every $v\in H\backslash\{0\}$, one has 
\[
F(s_v,v)=G(s_vv)=0,\quad F'_s(s_v,v)= \langle G'(s_vv),v \rangle = \frac{1}{s_v}\langle E''(s_vv)s_vv,s_vv\rangle
<0. \] 
The implicit function theorem concludes immediately that $\rho(v) := s_v$ is of $C^1$ on a neighborhood of $v$. The proof is completed with the arbitrariness of $v\in H\backslash\{0\}$.
\end{proof}

Inspired by Lemma~\ref{lem:sigma}, a practical retraction \eqref{Nehari retraction} is introduced in Lemma~\ref{lemma:rho}, called the Nehari retraction.

\begin{lemma}\label{lemma:rho}
Under the assumptions (A1)-(A3), the mapping
\begin{align}\label{Nehari retraction}
\mathcal{R}(u,v) :=\rho(u+v)(u+v),\quad \forall\;(u,v)\in H \times H, \, u+v \neq 0
\end{align}
with $\rho$ defined in Lemma~\ref{lem:sigma}, is a retraction on the Nehari manifold $\mathcal{N}$. 
\end{lemma}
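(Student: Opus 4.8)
The plan is to verify directly the three requirements imposed by Definition~\ref{retraction definition}: that $\mathcal{R}$ is $C^1$ on an open subset of $H\times H$ containing $T\mathcal{N}$, that $\mathcal{R}(T\mathcal{N})\subset\mathcal{N}$, and that each restriction $\mathcal{R}_u$ satisfies conditions (i) and (ii). The essential input is Lemma~\ref{lem:sigma}, which guarantees that $\rho$ is $C^1$ on $H\backslash\{0\}$ and that $\rho(w)w\in\mathcal{N}$ for every $w\neq 0$.

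First I would settle the domain and regularity. Set $\Omega=\{(u,v)\in H\times H: u+v\neq 0\}$, which is open as the preimage of $H\backslash\{0\}$ under the continuous map $(u,v)\mapsto u+v$. To see $T\mathcal{N}\subset\Omega$, observe that $(u,\xi)\in T\mathcal{N}$ with $u+\xi=0$ would force $\xi=-u\in T_u\mathcal{N}$, i.e. $\langle G'(u),u\rangle=0$; but $\langle G'(u),u\rangle=\langle E''(u)u,u\rangle<0$ by (A2), a contradiction. Since $\mathcal{R}$ is the composition of $(u,v)\mapsto u+v$ with the $C^1$ map $w\mapsto\rho(w)w$ on $H\backslash\{0\}$, it is $C^1$ on $\Omega$. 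The mapping property $\mathcal{R}(T\mathcal{N})\subset\mathcal{N}$ and condition (i) are then immediate: $\mathcal{R}(u,\xi)=\rho(u+\xi)(u+\xi)\in\mathcal{N}$ by Lemma~\ref{lem:sigma}, and for $u\in\mathcal{N}$ the uniqueness in Lemma~\ref{lem:sigma} gives $\rho(u)=1$, whence $\mathcal{R}_u(0)=\rho(u)u=u$.

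The core of the argument is condition (ii). Writing $\mathcal{R}_u(s\xi)=\rho(u+s\xi)(u+s\xi)$ and differentiating the scalar-times-vector product at $s=0$, using $\rho(u)=1$, gives
\[
\left.\frac{d}{ds}\mathcal{R}_u(s\xi)\right|_{s=0}=\langle\rho'(u),\xi\rangle\,u+\xi.
\]
Thus condition (ii) reduces to showing $\langle\rho'(u),\xi\rangle=0$ for every $\xi\in T_u\mathcal{N}$. To obtain this I would differentiate the defining identity $G(\rho(w)w)\equiv 0$ (valid for all $w\neq 0$) at $w=u$ in the direction $\xi$, which yields $\langle\rho'(u),\xi\rangle\langle G'(u),u\rangle+\langle G'(u),\xi\rangle=0$. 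Since $\xi\in T_u\mathcal{N}$ means $\langle G'(u),\xi\rangle=0$ by \eqref{eq:TuN}, and since $\langle G'(u),u\rangle=\langle E''(u)u,u\rangle<0$ is nonzero by (A2), it follows that $\langle\rho'(u),\xi\rangle=0$, completing condition (ii).

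The routine parts are the openness of $\Omega$ and the chain/product-rule computations. The main obstacle, though ultimately short, is the verification of condition (ii): one must recognize that the Nehari identity $G(\rho(w)w)\equiv 0$ is the right object to differentiate, and that the tangency condition $\langle G'(u),\xi\rangle=0$ together with the strict negativity $\langle G'(u),u\rangle<0$ from (A2) is precisely what forces the derivative of the radial scaling $\rho$ to vanish along tangent directions. This is the geometric heart of why the radial projection defines a genuine first-order retraction.
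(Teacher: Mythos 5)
Your proposal is correct and follows essentially the same route as the paper's proof: verify $u+\xi\neq 0$ on $T\mathcal{N}$ via $\langle G'(u),u\rangle<0$, get regularity and the mapping property from Lemma~\ref{lem:sigma}, and establish condition (ii) by differentiating the identity $G(\rho(\cdot)\,\cdot)\equiv 0$ along tangent directions, using $\langle G'(u),\xi\rangle=0$ and $\langle G'(u),u\rangle<0$ to conclude that the derivative of $\rho$ vanishes. The only cosmetic difference is that you phrase this as a directional derivative $\langle\rho'(u),\xi\rangle$ while the paper differentiates the curve $s\mapsto\rho(u+s\xi)(u+s\xi)$ in $s$; the computations are identical.
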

\begin{proof}
It is noted that $u+\xi \neq 0$, $\forall \;(u,\xi) \in T\mathcal{N}$, due to the fact
\[ \langle G'(u),u+\xi \rangle = \langle G'(u),u\rangle +\langle G'(u),\xi \rangle = \langle G'(u), u\rangle <0,\quad \forall\; (u,\xi) \in T\mathcal{N}. \]
Therefore the mapping $\mathcal{R}$~\eqref{Nehari retraction} is well defined on $T\mathcal{N}$. According to Lemma~\ref{lem:sigma}, it is easy to see that the mapping $\mathcal{R}(u,v)$ given in \eqref{Nehari retraction} is of $C^1$ and $\mathcal{R}(T\mathcal{N})\subset \mathcal{N}$.  Clearly, for all $u \in \mathcal{N}$,  $\rho(u)=1$ and $\mathcal{R}_u(0)=\rho(u)u=u$. It remains to verify $\left.\frac{d}{ds}\mathcal{R}_u(s\xi)\right|_{s=0}=\xi$, $\forall\;\xi\in T_u\mathcal{N}$. In fact, the definition of $\rho$ 
states that
\[ G(\rho(u+s\xi)(u+s\xi))=0, \quad \forall \;(u,\xi) \in T\mathcal{N},\;  \forall s\in \mathbb{R}. \]
Differentiating it w.r.t.~$s$ and noticing $\rho(u)=1$, it holds for all $(u,\xi) \in T\mathcal{N}$ that
\begin{align*}
0 = \frac{d}{ds}G(\rho(u+s\xi)(u+s\xi))\Big|_{s=0}
&= \left\langle G'(\rho(u)u), \frac{d}{ds}\left(\rho(u+s\xi)(u+s\xi)\right)\Big|_{s=0} \right\rangle \\
&= \big\langle G'(u),u\big\rangle \frac{d}{ds}\rho(u+s\xi)\Big|_{s=0} + \big\langle G'(u),\xi\big\rangle.
\end{align*}
It follows from $\langle G'(u),u \rangle <0$ and $\langle G'(u),\xi \rangle =0$ that $\frac{d}{ds}\rho(u+s\xi)\big|_{s=0} = 0$. Therefore,
\[ \frac{d}{d s} \mathcal{R}_u(s \xi)\Big|_{s=0}=\left(\frac{d}{d s} \rho(u+s \xi)\Big|_{s=0}\right) u+\rho(u) \xi=\xi ,\quad \forall\; \xi \in T_u\mathcal{N}, \]
which completes the proof.
\end{proof}

\begin{figure}[!ht]
\centering
\vspace{-2ex}
{\includegraphics[width=0.42\textwidth,height=0.28\textwidth]{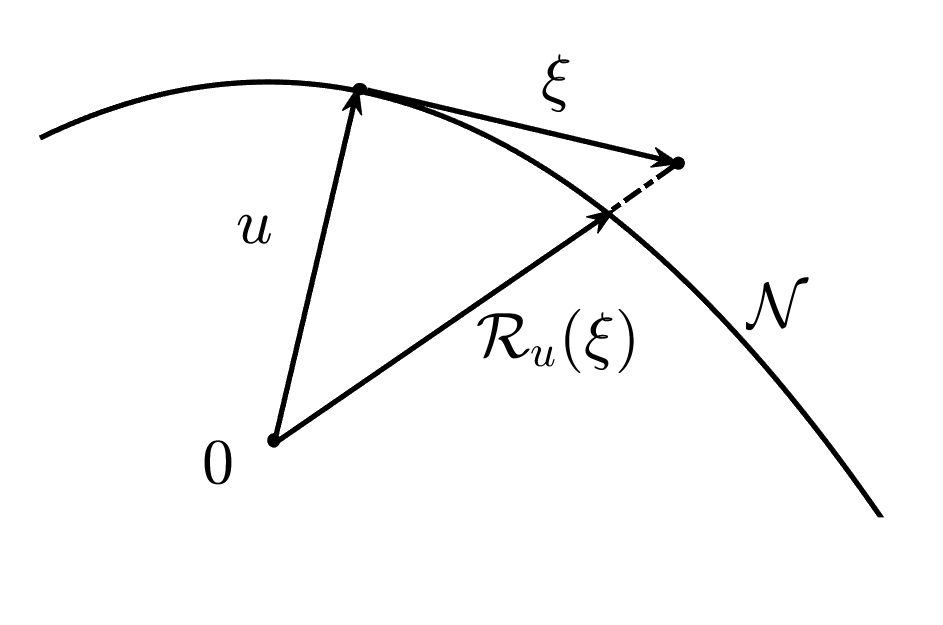}}
\vspace{-4ex}
\caption{Geometric landscape of the Nehari retraction $\mathcal{R}$ \eqref{Nehari retraction} in the subspace $\mathrm{span}\{u,\xi\}$.}\label{fig.out_retraction}
\end{figure}

As shown in Figure~\ref{fig.out_retraction}, the computation of the Nehari retraction $\mathcal{R}(u,\xi)$~\eqref{Nehari retraction} is done by gaining $\rho(v)$ for  $v = u +\xi$, which is equivalent to find the unique positive root of the equation $\langle E'(sv),v\rangle = 0$ w.r.t.~$s>0$. Generally, a nonlinear algebraic equation has to be numerically solved in each computation of the Nehari retraction. While for all examples in our numerical experiments in Section~\ref{Numerical experiments}, $\rho(v)$ is formulated explicitly as a simple expression and thus the computation of the Nehari retraction \eqref{Nehari retraction} is very efficient.

\subsection{Nonmonotone step-size search rules}\label{sec:Nonmonotone rules}
In terms of the convergence and efficiency of the algorithm, the selection of the step-size is crucial. Inspired by the nonmonotone step-size search rule in the Euclidean optimization \cite{Hongchao2004A} and its variant in the finite-dimensional manifold optimization \cite{2017AdaptiveBB}, we consider the following nonmonotone step-size search rule for Algorithm~\ref{alg:NMOM}: Given a constant $\sigma \in (0,1)$, find a step-size $\alpha_n>0$ s.t.
	\begin{align}
	\label{nonmonotone armijo step-size}
	E(\mathcal{R}_{u_n}(\alpha_n\xi_n)) &\leq C_n + \sigma  \alpha_n \left(\nabla_{\mathcal{N}} E(u_n),\xi_n\right)_{H},\quad n\geq0,
	\end{align}
where $C_{n}$, with a given parameter $\varrho \in [0,1)$, is computed iteratively by
\begin{align}\label{eq:CkQk-update}
 \begin{cases}
 C_0=E(u_0), \; Q_0 = 1,\\
 Q_{j+1} = \varrho Q_j +1, \; 
 C_{j+1} = \dfrac{\varrho Q_jC_j+E(u_{j+1})}{Q_{j+1}},\quad j=0,1,\ldots,n-1. \\
 \end{cases}
\end{align}
Clearly, $Q_n=1+\sum_{j=1}^n\varrho^j$ and $C_{n} = \frac{1}{Q_n}\big({E(u_n)+\sum_{j=1}^n\varrho^{j}E(u_{n-j})}\big)$. Thus, $C_n$ is a strict convex combination of $E(u_0),E(u_1),\cdots, E(u_n)$ when $\varrho\in (0,1)$. In the special case of $\varrho =0$, $C_n=E(u_n)$ and \eqref{nonmonotone armijo step-size} reduces to the Armijo-type search condition, which is monotone:
\[	E(\mathcal{R}_{u_n}(\alpha_n\xi_n)) \leq E(u_n) + \sigma  \alpha_n \left(\nabla_{\mathcal{N}} E(u_n),\xi_n\right)_{H},\quad n\geq0. \]

The following lemma concludes that the step-size satisfying \eqref{nonmonotone armijo step-size} always exists. The proof is based on the mathematical induction and borrows ideas in \cite{Hongchao2004A} and \cite{LXY2024JCM}.

\begin{lemma}[Existence of step-size]\label{lem:steo-size exists}
Under the assumptions (A1)-(A3), let $u_0 \in \mathcal{N}$, $C_0 = E(u_0)$ and $\mathcal{R}$ be a retraction to $\mathcal{N}$.  If $\nabla_{\mathcal{N}} E(u_n) \neq 0$ and $\xi_n \in T_{u_n}\mathcal{N}$ is descent direction, $n=0,1,\cdots$, then there always exists $\alpha_n^A$, s.t. all $\alpha_n \in (0,\alpha_n^A)$ satisfy the nonmonotone search rule \eqref{nonmonotone armijo step-size}.
\end{lemma}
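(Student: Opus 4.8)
The plan is to argue by induction on $n$, carrying along the invariant $E(u_n)\le C_n$, which ensures that the nonmonotone reference value $C_n$ never falls below the current functional value and therefore always leaves room for an Armijo-type decrease. The existence of a threshold $\alpha_n^A$ then follows from a standard one-dimensional first-order argument, while the propagation of the invariant relies on the convex-combination structure built into the update \eqref{eq:CkQk-update}.

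First I would reduce the search condition \eqref{nonmonotone armijo step-size} at step $n$ to a scalar statement by introducing $\phi_n(\alpha):=E(\mathcal{R}_{u_n}(\alpha\xi_n))$ for $\alpha$ near $0$. Since $E\in C^2$ and $\mathcal{R}$ is $C^1$, the function $\phi_n$ is $C^1$; moreover, the retraction conditions $\mathcal{R}_{u_n}(0)=u_n$ and $\frac{d}{d\alpha}\mathcal{R}_{u_n}(\alpha\xi_n)\big|_{\alpha=0}=\xi_n$ from Definition~\ref{retraction definition}, together with the chain rule and Definition~\ref{def:Riemannian-gradient}, yield $\phi_n(0)=E(u_n)$ and $\phi_n'(0)=\langle E'(u_n),\xi_n\rangle=(\nabla_{\mathcal{N}}E(u_n),\xi_n)_H$. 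Because $\xi_n$ is a descent direction, $\phi_n'(0)<0$. Writing $\delta_n:=C_n-E(u_n)\ge0$ (available from the inductive invariant), the search defect $D(\alpha):=\phi_n(\alpha)-C_n-\sigma\alpha\phi_n'(0)$ admits the expansion $D(\alpha)=-\delta_n+(1-\sigma)\alpha\,\phi_n'(0)+o(\alpha)$. Since $\sigma\in(0,1)$ and $\phi_n'(0)<0$, the linear term $(1-\sigma)\alpha\,\phi_n'(0)$ is strictly negative; when $\delta_n>0$ one has $D(0)=-\delta_n<0$ and continuity gives $D(\alpha)<0$ for small $\alpha$, while when $\delta_n=0$ the ratio $D(\alpha)/\alpha\to(1-\sigma)\phi_n'(0)<0$ forces $D(\alpha)<0$ for small $\alpha>0$. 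In either regime there is a threshold $\alpha_n^A>0$ such that every $\alpha_n\in(0,\alpha_n^A)$ satisfies \eqref{nonmonotone armijo step-size}.

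To close the induction I would propagate the invariant. The base case is immediate, as $C_0=E(u_0)$ gives $E(u_0)\le C_0$. For the inductive step, any admissible $\alpha_n\in(0,\alpha_n^A)$ produces $E(u_{n+1})\le C_n+\sigma\alpha_n\phi_n'(0)<C_n$. Using $Q_{n+1}=\varrho Q_n+1$, I would rewrite $C_{n+1}=\frac{\varrho Q_n}{Q_{n+1}}C_n+\frac{1}{Q_{n+1}}E(u_{n+1})$ as a convex combination of $C_n$ and $E(u_{n+1})$ with weights summing to one; since $E(u_{n+1})<C_n$, the average $C_{n+1}$ lies between them, so $E(u_{n+1})\le C_{n+1}$, which restores the invariant at index $n+1$ and lets the scalar argument of the previous paragraph apply at $u_{n+1}$ with $\xi_{n+1}$.

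The main obstacle I expect is not any single computation but the bookkeeping that ties the three ingredients together: one must verify that the invariant $E(u_n)\le C_n$ is genuinely needed to guarantee $\delta_n\ge0$ (and hence a strictly positive $\alpha_n^A$), and that the precise recurrence \eqref{eq:CkQk-update} is what makes $C_{n+1}$ a true convex combination so the invariant survives each iteration. The derivative identity $\phi_n'(0)=(\nabla_{\mathcal{N}}E(u_n),\xi_n)_H$, which hinges on retraction property (ii), is the only place where the geometry of $\mathcal{N}$ enters, and it is otherwise elementary.
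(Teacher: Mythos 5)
Your proposal is correct and follows essentially the same route as the paper's proof: an induction carrying the invariant $E(u_n)\le C_n$, a first-order expansion of $\alpha\mapsto E(\mathcal{R}_{u_n}(\alpha\xi_n))$ at $\alpha=0$ using retraction property (ii) to get the Armijo threshold, and the convex-combination structure of \eqref{eq:CkQk-update} to propagate the invariant. Your explicit case split on $\delta_n>0$ versus $\delta_n=0$ is a minor presentational refinement of the paper's observation that the monotone Armijo inequality with $E(u_n)$ on the right immediately implies the nonmonotone one with $C_n\ge E(u_n)$.
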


\begin{proof}
For $(u,\xi)\in T\mathcal{N}$, defining $\Phi(\alpha) = E(\mathcal{R}_u(\alpha\xi))$, $\alpha\in\mathbb{R}$, one has $\Phi(0)=E(u)$, $\Phi'(0)=\langle E'(u),\xi\rangle=\left(\nabla_{\mathcal{N}} E(u),\xi\right)_{H}$, and $E(\mathcal{R}_u(\alpha\xi)) = E(u)+\alpha\left(\nabla_{\mathcal{N}} E(u),\xi\right)_{H} + o(\alpha)$. Hence, if $\xi$ is a descent direction at $u$, there exists $\alpha^A>0$ s.t.
\begin{align}\label{armijio exists}
E(\mathcal{R}_{u}(\alpha\xi)) < E(u) + \sigma\alpha \left(\nabla_{\mathcal{N}} E(u),\xi\right)_{H}, \quad\forall \;\alpha \in (0,\alpha^A).
\end{align}
The proof continues by mathematical induction on $n$. Since $C_0 = E(u_0)$, the conclusion is trivial for $n=0$ by setting $(u,\xi)=(u_0,\xi_0)$ in \eqref{armijio exists}. For $n\geq1$, supposing that $\alpha_j>0$ ($j=0,1,\cdots,n-1$) satisfy \eqref{nonmonotone armijo step-size} with $u_{j+1} = \mathcal{R}_{u_{j}}(\alpha_j\xi_j)$, $\xi_j \in T_{u_j} \mathcal{N}$ and $C_j$ defined in \eqref{eq:CkQk-update}, we proceed to prove that there exists $\alpha_n^A>0$ s.t. \eqref{nonmonotone armijo step-size} holds for all $\alpha_n\in (0,\alpha_n^A)$. Noting that $\xi_{n-1}\in T_{u_{n-1}}\mathcal{N}$ is a descent direction at $u_{n-1}$, \eqref{eq:CkQk-update} yields $E(u_n)\leq C_{n-1}$ and
\[ C_n = \frac{\varrho Q_{n-1}C_{n-1} +E(u_{n})}{Q_{n}} \geq \frac{\varrho Q_{n-1}E(u_{n})+ E(u_n)}{Q_{n}} = E(u_n). \]
Setting $(u,\xi)=(u_n,\xi_n)$ in \eqref{armijio exists}, the proof is completed immediately.
\end{proof}

In addition, under the nonmonotone step-size search rule \eqref{nonmonotone armijo step-size}, a basic property holds as below.
\begin{lemma}\label{lem:EukCkEu0}
Under the assumptions (A1)-(A3), letting $\{u_n\}$ be the sequence generated by Algorithm~\ref{alg:NMOM} with the nonmonotone step-size search rule \eqref{nonmonotone armijo step-size}, 
the following inequalities hold,
\begin{align}
E(u_n)\leq C_n\leq C_{n-1}\leq E(u_0),\quad n\geq 1.
\end{align}
\end{lemma}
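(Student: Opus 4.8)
The plan is to run a short induction on $n$ whose engine is the observation that each reference value $C_n$ is a convex combination of the previous reference value $C_{n-1}$ and the new functional value $E(u_n)$. The only genuinely analytic input is the descent/search inequality; everything else is elementary algebra on convex combinations.

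First I would isolate the single inequality that drives the whole chain, namely $E(u_n)\le C_{n-1}$ for every $n\ge 1$. This is immediate from the search rule \eqref{nonmonotone armijo step-size} applied at index $n-1$, since $u_n=\mathcal{R}_{u_{n-1}}(\alpha_{n-1}\xi_{n-1})$ and the correction term $\sigma\alpha_{n-1}(\nabla_{\mathcal{N}}E(u_{n-1}),\xi_{n-1})_H$ is nonpositive, because $\alpha_{n-1}>0$, $\sigma\in(0,1)$, and $\xi_{n-1}$ is a descent direction so that $(\nabla_{\mathcal{N}}E(u_{n-1}),\xi_{n-1})_H<0$. No induction is needed for this piece; it holds term by term, and in fact it already appears inside the proof of Lemma~\ref{lem:steo-size exists}, which I would simply reuse.

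Next I would exploit the update formula \eqref{eq:CkQk-update}. Writing $C_n=\frac{\varrho Q_{n-1}C_{n-1}+E(u_n)}{\varrho Q_{n-1}+1}$ and using $Q_{n-1}\ge 1>0$ together with $\varrho\in[0,1)$, I would read $C_n$ as a convex combination of $C_{n-1}$ (weight $\varrho Q_{n-1}/(\varrho Q_{n-1}+1)$) and $E(u_n)$ (weight $1/(\varrho Q_{n-1}+1)$); these weights are nonnegative and sum to one precisely because $\varrho\ge 0$ and $Q_{n-1}\ge 1$. Two one-line identities then settle both remaining inequalities: $C_n-C_{n-1}=\frac{E(u_n)-C_{n-1}}{\varrho Q_{n-1}+1}\le 0$ gives $C_n\le C_{n-1}$, while $C_n-E(u_n)=\frac{\varrho Q_{n-1}(C_{n-1}-E(u_n))}{\varrho Q_{n-1}+1}\ge 0$ gives $E(u_n)\le C_n$; both use only $E(u_n)\le C_{n-1}$ from the previous step.

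Finally I would assemble the chain. The base case $n=1$ uses $C_0=E(u_0)$ directly, giving $E(u_1)\le C_1\le C_0=E(u_0)$. For general $n$ the per-step monotonicity $C_n\le C_{n-1}$ telescopes to $C_n\le C_{n-1}\le\cdots\le C_0=E(u_0)$, and combined with $E(u_n)\le C_n$ it yields the full string $E(u_n)\le C_n\le C_{n-1}\le E(u_0)$. I do not expect a serious obstacle: the only points requiring care are confirming that the convex-combination weights are nonnegative (handled by $\varrho\ge 0$ and $Q_{n-1}\ge 1$) and keeping track that it is the strict descent property of $\xi_{n-1}$ that renders the Armijo correction term harmless in deriving $E(u_n)\le C_{n-1}$.
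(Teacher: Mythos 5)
Your proposal is correct and follows essentially the same route as the paper's proof: extract $E(u_n)\le C_{n-1}$ from the search rule (the Armijo correction term being nonpositive for a descent direction), observe that $C_n$ is a convex combination of $E(u_n)$ and $C_{n-1}$ to get $E(u_n)\le C_n\le C_{n-1}$, and conclude via monotonicity and $C_0=E(u_0)$. Your version merely makes the convex-combination weights and the two one-line identities explicit, which the paper leaves implicit.
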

\begin{proof}
The step-size search rule \eqref{nonmonotone armijo step-size} implies that $E(u_n)\leq C_{n-1}$ for all $n\geq1$. In addition, \eqref{eq:CkQk-update} means that $C_n$ is a convex combination of $E(u_n)$ and $C_{n-1}$. Thus, $E(u_n)\leq C_n\leq C_{n-1}$ for all $n\geq1$. Consequently, the sequence $\{C_n\}$ is monotonically nonincreasing. The remaining inequalities $C_n\leq E(u_0)$ for all $n\geq0$ are guaranteed with the initialization $C_0=E(u_0)$.
\end{proof}

In practice, a standard backtracking strategy can be applied in step~4 of Algorithm~\ref{alg:NMOM} to find a suitable step-size $\alpha_n>0$ satisfying the nonmonotone step-size search rule \eqref{nonmonotone armijo step-size}. The detail is summarized in Algorithm~\ref{alg:nonmonotone-backtracking}. 

\begin{algorithm}[!ht]
\caption{Backtracking algorithm for searching a step-size $\alpha_n$ satisfying \eqref{nonmonotone armijo step-size}}\label{alg:nonmonotone-backtracking}
Given constants $0<\alpha_{\min}<\alpha_{\max}$, $\varrho \in[0,1)$ and a backtracking factor $\beta\in(0,1)$.
\begin{enumerate}[\bf{Step~4-}1.]
\vspace{-\topsep}
\setlength{\itemsep}{1pt}
\setlength{\parskip}{0pt}
\setlength{\parsep}{0pt}
\item Compute $Q_n$ and $C_n$ according to \eqref{eq:CkQk-update}.
\item Choose a trial step-size $ \alpha_n^0\in[\alpha_{\min},\alpha_{\max}]$ and set $\alpha_n:= \alpha_n^0$.
\item If $\alpha_n$ satisfies \eqref{nonmonotone armijo step-size}, then stop; otherwise, go to {\bf Step~4-4}.
\item Update $ \alpha_n:=\beta \alpha_n$ and go to {\bf Step~4-3}. 
\end{enumerate}
\vspace{-\topsep}
\end{algorithm}

\section{Global Convergence}\label{sec:Convergence}

In this section, we establish the global convergence results for {\em Algorithm~\ref{alg:NMOM} with step-sizes determined by Algorithm~\ref{alg:nonmonotone-backtracking}}, referred to as {\bf Algorithm~\ref*{alg:NMOM}$'$} in the following for simplicity.

\subsection{Main results}
In order to obtain the convergence for {\bf Algorithm~\ref*{alg:NMOM}$'$}, the following concepts of gradient-related and strongly gradient-related sequences for the search direction are required in our analysis. They ensure the sufficient decreasing property of the search direction around noncritical points and then prevent the sequence of iterative points $\{u_n\}$ from accumulating at any noncritical point. The definition of gradient-related sequences extends a similar concept in the finite-dimensional Riemannian optimization (see, e.g., \cite[Definition~4.2.1]{2008Optimization}). 

\begin{definition}\label{def:gradient-related}
Under the assumptions (A1)-(A3), let $\mathcal{R}$ be a retraction on the Nehari manifold $\mathcal{N}$ and let $\{(u_n,\xi_n)\}\subset T\mathcal{N}$ be a sequence.
    \begin{enumerate}[(i)]
\vspace{-.5\topsep}
\setlength{\itemsep}{1pt}
\setlength{\parskip}{0pt}
\setlength{\parsep}{0pt}
    \item The sequence $\{\xi_n\}$ is {\it gradient-related} if for every subsequence $\{u_n\}_{n\in \mathcal{I}}$ ($\mathcal{I}\subset\mathbb{N}$) converging to a noncritical point, the corresponding subsequence $\{\xi_n \}_{n\in \mathcal{I}}$ is bounded in $H$ 
        % (i.e., $\sup_{n\in \mathcal{I}}\|\xi_n\|_H<\infty$) 
        and $\limsup_{n \in \mathcal{I}}(\nabla_{\mathcal{N}}E(u_n),\xi_n)_{H}<0$. 
    \item The sequence $\{\xi_n\}$ is {\it strongly gradient-related} if there exist two constants $c_1,c_2>0$ s.t., for all $n\geq0$,
		\begin{align}\label{eq:strong-grad-related}
		  \left(\nabla_{\mathcal{N}}E(u_n), \xi_{n}\right)_H \leq-c_1\left\|\nabla_{\mathcal{N}}E(u_n)\right\|_H^{2},\quad  \left\|\xi_{n}\right\|_H \leq c_2\left\|\nabla_{\mathcal{N}}E(u_n)\right\|_H.
		\end{align}
        \end{enumerate}
	\end{definition}
\begin{remark}\label{re:strongg4.1}
A strongly gradient-related sequence is a gradient-related sequence with stronger controls by the Riemannian gradient directions. Clearly, the steepest descent direction $\xi_n^{\mathrm{SD}}=-\nabla_{\mathcal{N}}E(u_n)\in T_{u_n}\mathcal{N}$ satisfies the strongly gradient-related conditions. A wide class of other search directions is also allowed, e.g., a preconditioned steepest descent direction of the from $\xi_n^{\mathrm{PSD}}=-\mathcal{B}_n\nabla_{\mathcal{N}}E(u_n)$, where $\mathcal{B}_n$ is a bounded linear operator on $H$ with properties $\mathcal{B}_n\phi\in T_{u_n}\mathcal{N}$ and $c_1\|\phi\|_H^2\leq(\phi,\mathcal{B}_n\phi)_H\leq c_2\|\phi\|_H^2$, $\forall\;\phi\in T_{u_n}\mathcal{N}$. 
\end{remark}

By Lemma~\ref{lem:EukCkEu0}, the sequence of iterative points $\{u_n\}$ in {\bf Algorithm~\ref*{alg:NMOM}$'$} is always contained in the energy sublevel set 
\begin{align}\label{eq:Energy-sublevelset}
\mathcal{E}_{u_0}:=\left\{u\in \mathcal{N}:\, E(u)\leq E(u_0)\right\}.
\end{align}
Thus, if necessary, we only need to focus on the case where the domain of the retraction $\mathcal{R}$ is limited to be $\{(u,\xi)\in T\mathcal{N}:u\in\mathcal{E}_{u_0}\}$ in our convergence analysis.

The global convergence results for {\bf Algorithm~\ref*{alg:NMOM}$'$} are described as below.

\begin{theorem}\label{theorem-global-convergence}
Under the assumptions (A1)-(A3), let $\mathcal{R}$ be a retraction on the Nehari manifold $\mathcal{N}$ and $\{(u_n,\xi_n)\}\subset T\mathcal{N}$ be the sequence generated by {\bf Algorithm~\ref*{alg:NMOM}$'$}. The followings hold:
\begin{enumerate}[\rm(a)]
	  \item Assume that $\{\xi_n\}$ is a gradient-related sequence. Then every accumulation point of $\left\{u_{n}\right\}$ is a critical point of $E$ on the Nehari manifold $\mathcal{N}$, therefore a nontrivial critical point of $E$ in $H$.
	  \item Assume that $\{\xi_n\}$ is a strongly gradient-related sequence, $E$ satisfies the PS condition on the Nehari manifold $\mathcal{N}$, and the retraction $\mathcal{R}$ satisfies, for some constants $\delta>0$ and $M_{\delta}>0$,
	\begin{align}\label{lipschitz continuous}
    	  \|\mathcal{R}(u, \xi)-u\|_H\leq M_{\delta}\|\xi\|_H,\quad \forall \;(u,\xi)\in T\mathcal{N},\;  u\in\mathcal{E}_{u_0},\;\|\xi\|_H\leq\delta.
	\end{align}
	Then the sequence $\{u_n\}$ possesses an accumulation point, i.e., there exists a subsequence of $\{u_n\}$ converging to a critical point $u_*$. 
 \item In addition to (b), if $u_*$ is isolated, the whole sequence $\{u_n\}$ converges to $u_*$.
	\end{enumerate}
\end{theorem}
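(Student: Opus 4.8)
The plan is to first distill a summable sufficient-decrease estimate from the nonmonotone rule, and then treat the three assertions separately, passing to the limit in (a) by a weak-convergence device and using the PS condition together with the retraction bound \eqref{lipschitz continuous} in (b) and (c). I would begin by combining the acceptance test \eqref{nonmonotone armijo step-size} with the recursion \eqref{eq:CkQk-update}; since $Q_{n+1}=\varrho Q_n+1$, this yields
\[
C_{n+1}\le C_n+\frac{\sigma}{Q_{n+1}}\,\alpha_n\,(\nabla_{\mathcal N}E(u_n),\xi_n)_H.
\]
Because $Q_n=\sum_{j=0}^n\varrho^j\le(1-\varrho)^{-1}$ and $E$ is bounded below on $\mathcal N$, Lemma~\ref{lem:EukCkEu0} shows that $\{C_n\}$ is nonincreasing and convergent, so telescoping gives $\sum_n\alpha_n\big(-(\nabla_{\mathcal N}E(u_n),\xi_n)_H\big)<\infty$. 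In particular $\alpha_n(\nabla_{\mathcal N}E(u_n),\xi_n)_H\to0$, and under the strongly gradient-related bound \eqref{eq:strong-grad-related} this sharpens to $\sum_n\alpha_n\|\nabla_{\mathcal N}E(u_n)\|_H^2<\infty$.

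For \textbf{part (a)}, let $\bar u$ be an accumulation point with $u_n\to\bar u$ along $\mathcal I\subset\mathbb N$, and suppose for contradiction that $\nabla_{\mathcal N}E(\bar u)\neq0$. By Definition~\ref{def:gradient-related}(i), $\{\xi_n\}_{\mathcal I}$ is bounded and $\limsup_{n\in\mathcal I}(\nabla_{\mathcal N}E(u_n),\xi_n)_H<0$, so the decrease estimate forces $\alpha_n\to0$ along $\mathcal I$; hence for large $n\in\mathcal I$ backtracking has reduced the step and $\tilde\alpha_n:=\alpha_n/\beta$ violates \eqref{nonmonotone armijo step-size} (using $C_n\ge E(u_n)$). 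A mean-value argument applied to $\alpha\mapsto E(\mathcal R_{u_n}(\alpha\xi_n))$ produces $\theta_n\in(0,\tilde\alpha_n)$ with $g_n(\theta_n)>\sigma\,g_n(0)$, where $g_n(\alpha)=\langle E'(\mathcal R_{u_n}(\alpha\xi_n)),D\mathcal R_{u_n}(\alpha\xi_n)[\xi_n]\rangle$ and $g_n(0)=(\nabla_{\mathcal N}E(u_n),\xi_n)_H$. Passing to the limit is the crux: extracting $\xi_n\rightharpoonup\bar\xi$ weakly, I would use that $\mathcal R_{u_n}(\theta_n\xi_n)\to\bar u$ strongly (continuity of $\mathcal R$, $\theta_n\xi_n\to0$), so $E'(\mathcal R_{u_n}(\theta_n\xi_n))\to E'(\bar u)$ strongly in $H^*$, while $D\mathcal R_{u_n}(\theta_n\xi_n)[\xi_n]-\xi_n\to0$ strongly because $D\mathcal R_{u_n}(0)[\xi_n]=\xi_n$ (Definition~\ref{retraction definition}(ii)) and $D\mathcal R$ is continuous, whence $D\mathcal R_{u_n}(\theta_n\xi_n)[\xi_n]\rightharpoonup\bar\xi$. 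Pairing a strongly convergent functional with a weakly convergent vector then gives $\langle E'(\bar u),\bar\xi\rangle\ge\sigma\langle E'(\bar u),\bar\xi\rangle$, i.e. $\langle E'(\bar u),\bar\xi\rangle\ge0$; but the same strong/weak pairing also gives $\langle E'(\bar u),\bar\xi\rangle=\lim g_n(0)\le\limsup_{\mathcal I}(\nabla_{\mathcal N}E(u_n),\xi_n)_H<0$, a contradiction. Thus $\nabla_{\mathcal N}E(\bar u)=0$, and Lemma~\ref{lem:naturalcons} identifies $\bar u$ as a nontrivial critical point in $H$.

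For \textbf{part (b)}, I would prove $\liminf_n\|\nabla_{\mathcal N}E(u_n)\|_H=0$. If this failed, then $\|\nabla_{\mathcal N}E(u_n)\|_H\ge\varepsilon>0$ eventually, so $\sum_n\alpha_n<\infty$ and $\alpha_n\to0$; running the same backtracking/mean-value step and dividing by $\|\nabla_{\mathcal N}E(u_n)\|_H^2$ gives $(1-\sigma)c_1\le\big(g_n(\theta_n)-g_n(0)\big)/\|\nabla_{\mathcal N}E(u_n)\|_H^2$. Here the weak-Lipschitz bound \eqref{lipschitz continuous} yields $\|\mathcal R_{u_n}(\theta_n\xi_n)-u_n\|_H\le M_\delta\theta_n\|\xi_n\|_H$, which together with $\|\xi_n\|_H\le c_2\|\nabla_{\mathcal N}E(u_n)\|_H$, the continuity of $E'$ and of $D\mathcal R$ on the sublevel set $\mathcal E_{u_0}$, and $\theta_n\to0$ forces the right-hand side to $0$, a contradiction. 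Choosing then $\{u_n\}_{\mathcal I}$ with $\|\nabla_{\mathcal N}E(u_n)\|_H\to0$ and $E(u_n)$ bounded, the PS condition (Definition~\ref{PScondition_M}) supplies a subsequence $u_n\to u_*$, and continuity of $\nabla_{\mathcal N}E$ makes $u_*$ critical. For \textbf{part (c)}, the same estimate shows that the accepted step-sizes are bounded below by a positive constant on $\mathcal E_{u_0}$ (a rejected trial forces its mean-value point to stay bounded below), so $\alpha_n\|\nabla_{\mathcal N}E(u_n)\|_H^2\to0$ upgrades to $\|\nabla_{\mathcal N}E(u_n)\|_H\to0$; moreover $\|u_{n+1}-u_n\|_H\le M_\delta\alpha_n\|\xi_n\|_H\le M_\delta c_2\,\alpha_n\|\nabla_{\mathcal N}E(u_n)\|_H\to0$ (using $\alpha_n\le\alpha_{\max}$). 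If $\{u_n\}$ did not converge to the isolated $u_*$, it would cross a fixed annulus around $u_*$ infinitely often; those crossings form a sequence with $\|\nabla_{\mathcal N}E\|_H\to0$ and bounded $E$, so the PS condition yields a critical accumulation point lying in the annulus, distinct from $u_*$ yet inside a small ball around it, contradicting isolation. Hence $u_n\to u_*$.

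The decisive difficulty throughout is the lack of compactness in the infinite-dimensional $H$. In (a) it is circumvented by the weak-convergence trick: pairing the strongly convergent dual element $E'(\mathcal R_{u_n}(\theta_n\xi_n))$ with the merely weakly convergent direction $\xi_n$ lets me pass to the limit without any strong convergence of $\{\xi_n\}$. In (b)--(c) the two enabling ingredients are the PS condition, which restores sequential compactness along near-critical sequences, and the easily verifiable bound \eqref{lipschitz continuous}, which substitutes for the unavailable Lipschitz continuity of the retraction and simultaneously delivers $\liminf_n\|\nabla_{\mathcal N}E(u_n)\|_H=0$, the uniform lower bound on the step-sizes, and the estimate $\|u_{n+1}-u_n\|_H\to0$. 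The most delicate point to execute carefully is controlling the retraction's first-order remainder uniformly in $n$ on $\mathcal E_{u_0}$ (so that the constants in the backtracking estimate are $n$-independent), which is where both \eqref{lipschitz continuous} and the strongly gradient-related bounds \eqref{eq:strong-grad-related} are indispensable.
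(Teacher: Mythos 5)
Your part (a) is essentially the paper's own argument: the summable decrease estimate from the nonmonotone rule, the contradiction via $\alpha_n\to0$ along the offending subsequence, the backtracking inequality for $\tilde\alpha_n=\alpha_n/\beta$, and the passage to the limit by pairing the strongly convergent dual element with the weakly convergent directions $\xi_n$. This is correct, and it works precisely because along that subsequence $u_n\to u_*$ \emph{strongly}, so continuity of $\hat E'_v$ (equivalently of $E'$ and $D\mathcal R$) at the single point $(u_*,0)$ suffices.

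Parts (b) and (c), however, contain a genuine gap. To prove $\liminf_n\|\nabla_{\mathcal N}E(u_n)\|_H=0$ you run the backtracking/mean-value estimate along a sequence $\{u_n\}$ that has \emph{no} strongly convergent subsequence (that is exactly the hypothesis you are trying to refute), and you then claim that $g_n(\theta_n)-g_n(0)\to0$ from ``the continuity of $E'$ and of $D\mathcal R$ on the sublevel set $\mathcal E_{u_0}$.'' In an infinite-dimensional $H$ the sublevel set is bounded but not compact, so continuity does not upgrade to uniform continuity, and $\|\mathcal R_{u_n}(\theta_n\xi_n)-u_n\|_H\to0$ does not imply $E'(\mathcal R_{u_n}(\theta_n\xi_n))-E'(u_n)\to0$ in $H^*$ when the base points $u_n$ themselves wander. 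The weak-convergence lemma cannot rescue this either, since it requires strong convergence of the base points. The same defect undermines your claim in (c) that the accepted step-sizes are uniformly bounded below (hence that $\|\nabla_{\mathcal N}E(u_n)\|_H\to0$ for the whole sequence): that is a finite-dimensional/Lipschitz-gradient fact that is not available here. The paper avoids both issues: for (b) it uses the condition \eqref{lipschitz continuous} together with $\sum_n\alpha_n|(\nabla_{\mathcal N}E(u_n),\xi_n)_H|<\infty$ to show that, under the contradiction hypothesis $\|\nabla_{\mathcal N}E(u_n)\|_H\ge\varepsilon$, one has $\sum_n\|u_{n+1}-u_n\|_H<\infty$, so $\{u_n\}$ is Cauchy and converges strongly to a noncritical point, contradicting part (a); for (c) it never asserts a step-size lower bound, but instead combines $\|u_{n+1}-u_n\|_H\to0$ with the summability of $\|u_{n+1}-u_n\|_H$ over the index set where the gradient is bounded away from zero, so that the annulus-crossing subsequence necessarily contains points with arbitrarily small Riemannian gradient, to which the PS condition is then applied. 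You should replace your uniform-continuity steps with these two arguments; the rest of your outline then goes through.
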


Theorem~\ref{theorem-global-convergence} ensures that the sequence generated by {\bf Algorithm~\ref*{alg:NMOM}$'$} must have an accumulation point, and every accumulation point must be a critical point. Furthermore, the whole sequence would converge to a particular accumulation point that is an isolated critical point. Note that the assumption \eqref{lipschitz continuous} naturally holds by the $C^1$ property of the retraction when $H$ is finite-dimensional and $\mathcal{E}_{u_0}$ is bounded. While in general infinite-dimensional cases, the assumption \eqref{lipschitz continuous} is not directly guaranteed due to the lack of compactness. Nevertheless, as we will see later, for typical PDEs considered in Section~\ref{Numerical experiments}, the proposed Nehari retraction $\mathcal{R}(u,\xi)$ in \eqref{Nehari retraction} indeed meets the assumption \eqref{lipschitz continuous}.

The proof of Theorem~\ref{theorem-global-convergence} will be presented in Section~\ref{sec:proof-cvg}, and the following lemma will play a crucial role.
\begin{lemma}\label{for lemma2}
Under the assumptions (A1)-(A3), there exists a number $C_{\infty}$ s.t.
\begin{align}
\lim\limits_{n\rightarrow \infty} E(u_n) = \lim\limits_{n\rightarrow \infty}C_n = C_{\infty}, \qquad
\sum_{n=0}^{\infty}  \alpha_n\left|\left(\nabla_{\mathcal{N}}E(u_n), \xi_n\right)_H\right| < \infty. 
\end{align}
\end{lemma}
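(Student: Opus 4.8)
The plan is to reduce everything to bookkeeping on the scalar sequences $\{E(u_n)\}$, $\{C_n\}$, $\{Q_n\}$ and the descent amounts $\Delta_n := -\alpha_n(\nabla_{\mathcal{N}}E(u_n),\xi_n)_H$, which are strictly positive since each $\xi_n$ is a descent direction and $\alpha_n>0$; note that no compactness of $H$ enters here, so the infinite-dimensional setting causes no trouble for this particular lemma. First I would establish that $\{C_n\}$ converges. By Lemma~\ref{lem:EukCkEu0} the sequence $\{C_n\}$ is monotonically nonincreasing, and since each $C_n$ is a convex combination of $E(u_0),\dots,E(u_n)$ by \eqref{eq:CkQk-update} while $E(u)\geq E(0)$ for every $u\in\mathcal{N}$ by Remark~\ref{rmk:A1-A3}, it is bounded below by $E(0)$. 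Hence $C_n\downarrow C_{\infty}$ for some real number $C_{\infty}$.

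Next I would extract the summability estimate, which simultaneously yields the second assertion. Writing $u_{n+1}=\mathcal{R}_{u_n}(\alpha_n\xi_n)$ and recasting the search rule \eqref{nonmonotone armijo step-size} as $E(u_{n+1})\leq C_n-\sigma\Delta_n$, I insert this into the update $C_{n+1}=(\varrho Q_nC_n+E(u_{n+1}))/Q_{n+1}$ and use $Q_{n+1}=\varrho Q_n+1$ to obtain $C_{n+1}\leq C_n-\sigma\Delta_n/Q_{n+1}$. Since $Q_{n+1}=\sum_{j=0}^{n+1}\varrho^j\leq 1/(1-\varrho)$ for $\varrho\in[0,1)$, this gives $C_n-C_{n+1}\geq\sigma(1-\varrho)\Delta_n$. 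Telescoping from $0$ to $N$ and bounding $C_0-C_{N+1}\leq E(u_0)-C_{\infty}<\infty$ then shows $\sum_{n=0}^{\infty}\Delta_n\leq (E(u_0)-C_{\infty})/(\sigma(1-\varrho))<\infty$, which is exactly $\sum_n\alpha_n|(\nabla_{\mathcal{N}}E(u_n),\xi_n)_H|<\infty$; in particular $\Delta_n\to 0$.

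Finally, for $\lim_n E(u_n)=C_{\infty}$ I would solve the update \eqref{eq:CkQk-update} algebraically for $E(u_{n+1})=Q_{n+1}C_{n+1}-\varrho Q_nC_n$ and pass to the limit. The degenerate case $\varrho=0$ is immediate since then $C_n\equiv E(u_n)$; for $\varrho\in(0,1)$ one has $Q_n\to 1/(1-\varrho)$ and $C_n\to C_{\infty}$, so $E(u_{n+1})\to \frac{C_{\infty}}{1-\varrho}-\frac{\varrho C_{\infty}}{1-\varrho}=C_{\infty}$, completing the proof. The argument follows the nonmonotone line-search analysis of \cite{Hongchao2004A}; the only structural input truly needed is the lower bound $E\geq E(0)$ on $\mathcal{N}$ that makes $\{C_n\}$ bounded below, and the main point requiring care is keeping the telescoping constant $\sigma(1-\varrho)$ uniform in $n$ and treating the $\varrho$-dependence of $Q_n$ (and the boundary case $\varrho=0$) cleanly, rather than any genuine analytic obstacle.
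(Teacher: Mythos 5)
Your proposal is correct and follows essentially the same route as the paper's proof: establish $C_n\downarrow C_\infty$ from monotonicity (Lemma~\ref{lem:EukCkEu0}) and the lower bound $E\geq E(0)$ on $\mathcal{N}$, derive $C_{n+1}\leq C_n-\sigma\alpha_n\left|\left(\nabla_{\mathcal{N}}E(u_n),\xi_n\right)_H\right|/Q_{n+1}$ from \eqref{nonmonotone armijo step-size}--\eqref{eq:CkQk-update}, telescope using the uniform bound $Q_{n+1}\leq 1/(1-\varrho)$, and recover $E(u_{n+1})=Q_{n+1}C_{n+1}-\varrho Q_nC_n\to C_\infty$. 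The only cosmetic difference is that you bound $Q_{n+1}$ before summing while the paper sums first and then invokes $Q_n\leq Q_\infty$; the substance is identical.
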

\begin{proof}
Firstly, since $E(u_n)\geq E(0)$, %(see Remark~\ref{rmk:A1-A3}), 
Lemma~\ref{lem:EukCkEu0} implies that $\{C_n\}$ is monotonically nonincreasing and bounded from below. Thus, $\{C_n\}$ converges to a finite number $C_{\infty}$ with $C_{\infty}\leq C_n$. Noting that $Q_n=1+\sum_{j=1}^{n}\varrho^{j}\to 1/(1-\varrho)=:Q_{\infty}$ as $n\to\infty$, the definition of $C_n$ in \eqref{eq:CkQk-update} leads to $E(u_{n}) = Q_{n}C_{n}-\varrho Q_{n-1}C_{n-1} \to C_{\infty}$ as $n\to\infty$. Secondly, in view of \eqref{nonmonotone armijo step-size}-\eqref{eq:CkQk-update}, we have
\begin{align}\label{eq:Ck-decay}
C_{n+1}=\frac{\varrho Q_{n} C_{n}+E\left(u_{n+1}\right)}{Q_{n+1}}
\leq C_n-\frac{\sigma \alpha_n\left|\left(\nabla_{\mathcal{N}}E(u_n), \xi_n\right)_H\right|}{Q_{n+1}},\quad n\geq 0.
\end{align}  
Summing \eqref{eq:Ck-decay} w.r.t. $n\geq 0$ and noting that $C_{\infty}\leq C_{n+1}\leq C_n\leq E(u_0)$, one gets
\[
\sum_{n=0}^{\infty} \frac{ \alpha_n \left|\left( \nabla_{\mathcal{N}}E(u_n) ,\xi_n\right)_H\right|}{Q_{n+1}} 
\leq \sum_{n=0}^{\infty}\frac{C_n- C_{n+1}}{\sigma} = \frac{E(u_0)-C_{\infty}}{\sigma}<\infty.
\]
With the fact $0<Q_n\leq Q_{\infty}$ for all $n\geq 0$, the proof is completed. 
\end{proof}

\subsection{Proof of Theorem~\ref{theorem-global-convergence}}\label{sec:proof-cvg}

This subsection is devoted to provide the detailed proof of Theorem~\ref{theorem-global-convergence} in three parts. The technical detail in our proof differs from the case in the finite-dimensional Riemannian optimization, see, e.g., \cite[Theorem~4.3.1]{2008Optimization}. Actually, the sequential compactness principle based on the Bolzano-Weierstrass theorem used in \cite{2008Optimization} is only available in finite-dimensional cases. To deal with general Hilbert submanifolds, a {\em weak convergence technique} (see Lemmas~\ref{lem:strong-weak-cvg} and \ref{lem:strong-weak-cvg-DEhat} below) is employed. With this weak convergence technique, the proof of the assertion (a) in Theorem~\ref{theorem-global-convergence} can be achieved by the contradiction argument with full use of the properties of the gradient-related sequence of descent directions and the nonmonotone step-size search rule based on the backtracking strategy.

\subsubsection{Weak convergence technique and analysis of accumulation points}
Firstly, we prove the assertion (a) in Theorem~\ref{theorem-global-convergence}, i.e., every accumulation point of the sequence $\left\{u_{n}\right\}$ is a critical point, with a weak convergence technique stated in the following lemmas.  

\begin{lemma}\label{lem:strong-weak-cvg}
For sequences $\{\ell_n\}\subset H^*$ and $\{\eta_n\}\subset H$, if $\ell_n\to\ell_*$ strongly in $H^*$ and $\eta_n\rightharpoonup \eta_*$ weakly in $H$, then $\langle\ell_n,\eta_n\rangle \to \langle\ell_*,\eta_*\rangle$ as $n\to\infty$.
\end{lemma}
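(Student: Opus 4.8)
The plan is to prove the claim directly from the definition of weak convergence, after a preliminary reduction that controls the quantity $\langle \ell_n,\eta_n\rangle - \langle \ell_*,\eta_*\rangle$ by splitting it into two pieces. First I would write the telescoping decomposition
\[
\langle \ell_n,\eta_n\rangle - \langle \ell_*,\eta_*\rangle = \langle \ell_n - \ell_*, \eta_n\rangle + \langle \ell_*, \eta_n - \eta_*\rangle,
\]
so that the first term isolates the strong convergence of $\{\ell_n\}$ and the second isolates the weak convergence of $\{\eta_n\}$. The second term tends to $0$ directly by the definition of weak convergence in $H$, since $\ell_*\in H^*$ is a fixed bounded linear functional and $\eta_n\rightharpoonup\eta_*$ means precisely that $\langle\ell_*,\eta_n\rangle\to\langle\ell_*,\eta_*\rangle$.

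For the first term I would use the bound $|\langle\ell_n-\ell_*,\eta_n\rangle|\leq \|\ell_n-\ell_*\|_{H^*}\,\|\eta_n\|_H$. Here $\|\ell_n-\ell_*\|_{H^*}\to 0$ by the assumed strong convergence of $\{\ell_n\}$ in $H^*$. The remaining point is that $\|\eta_n\|_H$ is bounded: this is exactly the content of the uniform boundedness principle (Banach--Steinhaus), which guarantees that every weakly convergent sequence in a Hilbert space is norm-bounded, say $\|\eta_n\|_H\leq M$ for some constant $M>0$ and all $n$. Hence the first term is bounded by $M\|\ell_n-\ell_*\|_{H^*}\to 0$.

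Combining the two estimates, both terms vanish as $n\to\infty$, which yields $\langle\ell_n,\eta_n\rangle\to\langle\ell_*,\eta_*\rangle$ and completes the proof. The only mildly subtle point, and the part I would be careful to state explicitly, is the boundedness of the weakly convergent sequence $\{\eta_n\}$; once that is invoked via Banach--Steinhaus, the rest is a routine $\varepsilon$-splitting argument with no infinite-dimensional difficulty. Everything else is an elementary application of the triangle inequality and the Cauchy--Schwarz (operator-norm) bound for the duality pairing.
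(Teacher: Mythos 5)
Your proof is correct and follows essentially the same route as the paper: the identical splitting into a term controlled by weak convergence of $\{\eta_n\}$ and a term controlled by strong convergence of $\{\ell_n\}$ together with the norm-boundedness of the weakly convergent sequence. The only difference is cosmetic — you make the Banach--Steinhaus justification for that boundedness explicit, which the paper simply cites as a known fact.
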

\begin{proof}
Rewrite $\langle\ell_n,\eta_n\rangle=\langle\ell_*,\eta_n\rangle+\langle\ell_n-\ell_*,\eta_n\rangle$. Clearly, $\langle\ell_*,\eta_n\rangle\to\langle\ell_*,\eta_*\rangle$ since $\eta_n\rightharpoonup \eta_*$ weakly in $H$, and $\langle\ell_n-\ell_*,\eta_n\rangle\to0$ due to $\|\ell_n-\ell_*\|_{H^*}\to0$ and the fact that the weakly convergent sequence $\{\eta_n\}$ is bounded in $H$.
\end{proof}

\begin{lemma}\label{lem:strong-weak-cvg-DEhat}
Under the assumptions (A1)-(A3), let $\mathcal{R}$ be a retraction on the Nehari manifold $\mathcal{N}$. For any sequences $\{(w_n,\eta_n)\}\subset T\mathcal{N}$ and $\{s_n\}\subset\mathbb{R}$, if $w_n\to w_* \in \mathcal{N}$ strongly in $H$, $\eta_n\rightharpoonup\eta_*$ weakly in $H$ and $|s_n|\to0$ as $n\to\infty$, then $\eta_*\in T_{w_*}\mathcal{N}$ and
\begin{align}
\lim_{n\to\infty}\frac{E(\mathcal{R}_{w_n}(s_n\eta_n))-E(w_n)}{s_n}=\left\langle E'(w_*),\eta_*\right\rangle=\left(\nabla_{\mathcal{N}}E(w_*),\eta_*\right)_H. 
\end{align}
\end{lemma}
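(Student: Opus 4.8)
The plan is to establish the two assertions in order: first the membership $\eta_*\in T_{w_*}\mathcal{N}$, and then the convergence of the difference quotient, which I would reduce to a single inner product so that the strong--weak convergence Lemma~\ref{lem:strong-weak-cvg} applies. For the membership, each $\eta_n\in T_{w_n}\mathcal{N}$ satisfies $\langle G'(w_n),\eta_n\rangle=0$ by the characterization \eqref{eq:TuN}. Since $G\in C^1$ (as noted in Section~\ref{section 2}) and $w_n\to w_*$ strongly in $H$, one has $G'(w_n)\to G'(w_*)$ strongly in $H^*$; combined with $\eta_n\rightharpoonup\eta_*$ weakly, Lemma~\ref{lem:strong-weak-cvg} gives $\langle G'(w_*),\eta_*\rangle=\lim_n\langle G'(w_n),\eta_n\rangle=0$, i.e.\ $\eta_*\in T_{w_*}\mathcal{N}$.

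For the limit, I would exploit that $t\eta_n\in T_{w_n}\mathcal{N}$ for every $t\in\mathbb{R}$, so the scalar function $h_n(t):=E(\mathcal{R}(w_n,t\eta_n))$ is well defined and $C^1$, with $h_n(0)=E(w_n)$ and $h_n'(t)=\langle E'(\mathcal{R}(w_n,t\eta_n)),\,\mathrm{D}_2\mathcal{R}(w_n,t\eta_n)[\eta_n]\rangle$, where $\mathrm{D}_2\mathcal{R}$ denotes the partial Fr\'echet derivative of $\mathcal{R}$ in its second argument. By the mean value theorem there is $\theta_n$ with $|\theta_n|<|s_n|$ such that the difference quotient equals $h_n'(\theta_n)$. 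Since $|\theta_n|\to0$ and $\{\eta_n\}$ is bounded (weakly convergent sequences are bounded), $(w_n,\theta_n\eta_n)\to(w_*,0)$ in $H\times H$. I would then treat the two factors of $h_n'(\theta_n)$ separately: by continuity of $\mathcal{R}$ and of $E'$, the first factor $E'(\mathcal{R}(w_n,\theta_n\eta_n))\to E'(w_*)$ strongly in $H^*$; for the second factor I would split $\mathrm{D}_2\mathcal{R}(w_n,\theta_n\eta_n)[\eta_n]=\mathrm{D}_2\mathcal{R}(w_*,0)[\eta_n]+\bigl(\mathrm{D}_2\mathcal{R}(w_n,\theta_n\eta_n)-\mathrm{D}_2\mathcal{R}(w_*,0)\bigr)[\eta_n]$.

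The key point is that $\mathrm{D}_2\mathcal{R}(w_*,0)$ is a fixed bounded linear operator, hence weak-weak continuous, so $\mathrm{D}_2\mathcal{R}(w_*,0)[\eta_n]\rightharpoonup\mathrm{D}_2\mathcal{R}(w_*,0)[\eta_*]=\eta_*$, the last equality being exactly retraction property (ii) in Definition~\ref{retraction definition} applied to $\eta_*\in T_{w_*}\mathcal{N}$; meanwhile the remainder vanishes strongly, because the $C^1$ regularity of $\mathcal{R}$ forces $\|\mathrm{D}_2\mathcal{R}(w_n,\theta_n\eta_n)-\mathrm{D}_2\mathcal{R}(w_*,0)\|\to0$ in operator norm while $\{\eta_n\}$ stays bounded. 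Thus $\mathrm{D}_2\mathcal{R}(w_n,\theta_n\eta_n)[\eta_n]\rightharpoonup\eta_*$ weakly in $H$, and applying Lemma~\ref{lem:strong-weak-cvg} to the strongly convergent first factor and the weakly convergent second factor yields $h_n'(\theta_n)\to\langle E'(w_*),\eta_*\rangle$; Definition~\ref{def:Riemannian-gradient} then rewrites this as $(\nabla_{\mathcal{N}}E(w_*),\eta_*)_H$ since $\eta_*\in T_{w_*}\mathcal{N}$. I expect the main obstacle to be precisely this passage to the limit in a product of two sequences only one of which converges strongly: the weak-only convergence of $\eta_n$ forbids a direct limit, and it is the decomposition of $\mathrm{D}_2\mathcal{R}(w_n,\theta_n\eta_n)[\eta_n]$ into a weakly convergent fixed-operator part and a strongly vanishing remainder, combined with Lemma~\ref{lem:strong-weak-cvg}, that makes the argument go through in the infinite-dimensional setting.
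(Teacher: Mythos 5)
Your proof is correct, and it rests on the same two pillars as the paper's: the mean value theorem applied to $t\mapsto E(\mathcal{R}(w_n,t\eta_n))$, and the strong--weak pairing Lemma~\ref{lem:strong-weak-cvg} (used identically for the membership $\eta_*\in T_{w_*}\mathcal{N}$). The one structural difference is in how the derivative of the composite is handled. The paper sets $\hat{E}:=E\circ\mathcal{R}$, notes that $\hat{E}$ is $C^1$ on an open neighbourhood of $T\mathcal{N}$ in $H\times H$, and writes the difference quotient as $\langle \hat{E}'_v(w_n,\tilde{s}_n\eta_n),\eta_n\rangle$; since $\hat{E}'_v$ is continuous into $H^*$ and $(w_n,\tilde{s}_n\eta_n)\to(w_*,0)$ strongly, a single application of Lemma~\ref{lem:strong-weak-cvg} finishes the argument, with retraction property (ii) entering only at the very end to identify $\langle\hat{E}'_v(w_*,0),\eta_*\rangle$ with $\langle E'(w_*),\eta_*\rangle$. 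You instead unpack $\hat{E}'_v$ via the chain rule into the product $\langle E'(\mathcal{R}(w_n,\theta_n\eta_n)),\,\mathrm{D}_2\mathcal{R}(w_n,\theta_n\eta_n)[\eta_n]\rangle$, which forces you to control the weak limit of $\mathrm{D}_2\mathcal{R}(w_n,\theta_n\eta_n)[\eta_n]$ by splitting off the fixed bounded operator $\mathrm{D}_2\mathcal{R}(w_*,0)$ (weak--weak continuous, equal to the identity on $T_{w_*}\mathcal{N}$ by property (ii)) and killing the remainder with operator-norm continuity of $\mathrm{D}_2\mathcal{R}$. That decomposition is valid, but it is exactly the work that the paper's bundling avoids: strong convergence of $\hat{E}'_v(w_n,\tilde{s}_n\eta_n)$ in $H^*$ already encodes both the strong convergence of $E'(\mathcal{R}(\cdots))$ and the operator-norm convergence of $\mathrm{D}_2\mathcal{R}(\cdots)$ simultaneously, so the product-of-sequences obstacle you single out never arises. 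Both routes are rigorous; the paper's is the more economical packaging of the same computation.
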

\begin{proof}
Let $\hat{\mathcal{R}}$ be the $C^1$ extension of $\mathcal{R}$ s.t. $\left.\hat{\mathcal{R}}\right|_{\mathcal{N}} = \mathcal{R}$ and define $\hat{E} := E \!\circ\! \hat{\mathcal{R}}$. Clearly, $\hat{E}$ is a $C^1$ functional on an open subset $\mathcal{U}$ of $H\times H$ containing $T\mathcal{N}$, and
\[ E(w_n)= E(\mathcal{R}_{w_n}(0)) = \hat{E}(w_n,0), \quad E(\mathcal{R}_{w_n}(s_n\eta_n)) = \hat{E}(w_n,s_n\eta_n). \]
Denote $\hat{E}'_v(u,v)\in H^*$ as the partial Fr\'echet-derivative of $\hat{E}$ w.r.t.~$v$ at $(u,v)\in\mathcal{U}$. The mean value theorem states that, for some $\tilde{s}_n\in\mathbb{R}$ between $s_n$ and $0$, 
\begin{align}\label{mean theorem}
  \frac{E(\mathcal{R}_{w_n}(s_n\eta_n))-E(w_n)}{s_n}
  =\frac{\hat{E}(w_n,s_n\eta_n)-\hat{E}(w_n,0)}{s_n}
  =\big\langle \hat{E}'_v(w_n,\tilde{s}_n\eta_n),\eta_n\big\rangle. 
\end{align}
Noticing that $|\tilde{s}_n|<|s_n|\to 0$ as $n\to\infty$ and the weakly convergent sequence $\{\eta_n\}$ is bounded in $H$, one can conclude that $(w_n,\tilde{s}_n\eta_n)\to(w_*,0)$ strongly in $H\times H$. Moreover, since $\hat{E}$ is of $C^1$, the partial Fr\'echet-derivative $\hat{E}'_v:\mathcal{U}\to H^*$ is continuous. As a result, $\hat{E}'_v(w_n,\tilde{s}_n\eta_n)\to \hat{E}'_v(w_*,0)$ strongly in $H^*$. Hence, Lemma~\ref{lem:strong-weak-cvg} yields 
\begin{align}\label{eq:Ehat-prime-limit}
  \big\langle\hat{E}'_v(w_n,\tilde{s}_n\eta_n),\eta_n\big\rangle \rightarrow \big\langle\hat{E}'_v(w_*,0),\eta_* \big\rangle \quad\mbox{as}\;\; n\rightarrow \infty. 
\end{align}
Reviewing the $C^1$ functional $G$, it holds that $G'(w_n)\to G'(w_*)$ strongly in $H^*$. Applying Lemma~\ref{lem:strong-weak-cvg} again, $\left\langle G'(w_*), \eta_* \right\rangle=\lim_{n\to\infty} \left\langle G'(w_n), \eta_n \right\rangle=0$. Thus, $(w_*,\eta_*)\in T\mathcal{N}$ and the following holds
  \begin{align}\label{eq:Ehat-prime-limit-Rg}
    \big\langle\hat{E}'_v(w_*,0),\eta_*\big\rangle 
    = \frac{d}{ds}\hat{E}(w_*,s\eta_*)\Big|_{s=0}
    = \frac{d}{ds}E(\mathcal{R}_{w_*}(s\eta_*))\Big|_{s=0}
    =\left\langle E'(w_*),\eta_*\right\rangle.
  \end{align}
The proof is finished by the combination of \eqref{mean theorem}-\eqref{eq:Ehat-prime-limit-Rg} and the definition of the Riemannian gradient in Definition \ref{def:Riemannian-gradient}.
\end{proof}

With the above preparations, we now prove (a) in Theorem~\ref{theorem-global-convergence} by the contradiction argument.

\begin{proof}[Proof of the assertion (a) in Theorem~\ref{theorem-global-convergence}]
For the sake of contradiction, suppose that there is a subsequence $\left\{u_{n}\right\}_{n \in \mathcal{I}}$ converging to a noncritical $u_{*}$, i.e., $E'(u_{*})\neq 0$. 
Since $\{\xi_n\}$ is gradient-related, the subsequence $\left\{\xi_{n}\right\}_{n \in \mathcal{I}}$ is bounded in $H$ and satisfies
\[ \delta_0:=-\limsup_{n \in \mathcal{I}}\left(\nabla_{\mathcal{N}} E\left(u_{n}\right), \xi_n \right)_H > 0. \]
By the Eberlein-Shmulyan theorem \cite{Yosida}, $\{\xi_n\}_{n \in \mathcal{I}}$ possesses a subsequence, also denoted as $\{\xi_n\}_{n \in \mathcal{I}}$ for simplicity, weakly converging to a $\xi_*\in H$. Since $E\in C^2$, by Lemma~\ref{lem:strong-weak-cvg}, one has $\left\langle G'(u_*), \xi_* \right\rangle=\lim_{n \in \mathcal{I}} \left\langle G'(u_n), \xi_n \right\rangle=0$, i.e., $\xi_*\in T_{u_*}\mathcal{N}$, and 
\begin{align}
\left\langle E'(u_*), \xi_* \right\rangle
  = \lim_{n \in \mathcal{I}}\left\langle E'(u_n), \xi_n \right\rangle
  =\lim_{n \in \mathcal{I}} \left(\nabla_{\mathcal{N}} E\left(u_{n}\right), \xi_n \right)_H  \leq -\delta_0 < 0.
\end{align}
Applying Lemma~\ref{for lemma2}, one concludes that $\sum_{n \in \mathcal{I}} \alpha_n<\infty$, and therefore, $\left\{ \alpha_n\right\}_{n \in \mathcal{I}}$ converges to $0$. Thus, for $n\in \mathcal{I}$ large enough, $\tilde{\alpha}_n:=\beta^{-1} \alpha_n= \alpha_n^0\beta^{m_n-1}>0$ does not satisfy the nonmonotone step-size rule \eqref{nonmonotone armijo step-size}, i.e., 
\begin{align}
	E\left(\mathcal{R}_{u_{n}}\left(\tilde{\alpha}_n \xi_{n}\right)\right)>C_n+\sigma\tilde{\alpha}_n\left(\nabla_{\mathcal{N}}E(u_n), \xi_n\right)_H,\quad \forall \;n\in \mathcal{I}.
\end{align}
It follows from the fact that $E(u_n)\leq C_n$ in Lemma~\ref{lem:EukCkEu0} that
  \begin{align}\label{eq:aktilde-ineq}
      \frac{E\left(\mathcal{R}_{u_{n}}\left(\tilde{\alpha}_n \xi_{n}\right)\right)-E(u_n)}{\tilde{\alpha}_n} > \sigma\left(\nabla_{\mathcal{N}}E(u_n), \xi_n\right)_H =\sigma\left\langle E'(u_n), \xi_n\right\rangle,\quad \forall \;n\in \mathcal{I}.
  \end{align}
Notice that $\tilde{\alpha}_n=\beta^{-1}\alpha_n\to0$ as $n\to\infty,\,n\in\mathcal{I}$. Taking the limit $n\to\infty,\,n\in\mathcal{I}$ in both sides in the inequality \eqref{eq:aktilde-ineq} and applying Lemmas~\ref{lem:strong-weak-cvg}-\ref{lem:strong-weak-cvg-DEhat} result in
\[ \langle E'(u_*),\xi_*\rangle \geq \sigma\langle E'(u_*),\xi_*\rangle. \]
This is a contradiction since $\sigma\in(0,1)$ and $\langle E'(u_*),\xi_*\rangle\leq-\delta_0<0$. (a) is verified. 
\end{proof}

\subsubsection{Subsequence convergence}
Here, we prove the assertion (b) in Theorem~\ref{theorem-global-convergence}, i.e., $\{u_n\}$ has a convergent subsequence, under the additional assumptions that $E$ satisfies the PS condition on the Nehari manifold $\mathcal{N}$ and the retraction $\mathcal{R}$ satisfies the condition \eqref{lipschitz continuous}. The following lemma estimates the limit behavior of the iterative step distance $\|u_{n+1}-u_n\|_H$, which is crucial to the proofs of assertions (b) and (c). 

\begin{lemma}\label{lem:ukstep-estimates}
Under the assumptions (A1)-(A3), if $\{\xi_n\}$ is strongly gradient-related and the retraction $\mathcal{R}$ satisfies \eqref{lipschitz continuous}, then
\begin{align}\label{eq:ukstep-goesto-0}
\lim_{n\to\infty}\|u_{n+1}-u_n\|_H =0,
\end{align}
and for any $\varepsilon>0$, setting $\mathcal{I}_{\varepsilon}=\{n\in\mathbb{N}: \|\nabla_{\mathcal{N}}E(u_n)\|_H \geq \varepsilon\}$, there holds
\begin{align}\label{eq:sum-ukstep-eps}
\sum_{n\in \mathcal{I}_{\varepsilon}} \|u_{n+1}-u_n\|_H <\infty.
\end{align}
\end{lemma}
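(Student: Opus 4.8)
The plan is to combine the summability estimate of Lemma~\ref{for lemma2} with the two-sided control furnished by the strongly gradient-related condition \eqref{eq:strong-grad-related}, and then to convert the resulting step-length decay into the two claimed estimates through the local retraction bound \eqref{lipschitz continuous}. The guiding observation is that $\alpha_n\big|(\nabla_{\mathcal{N}}E(u_n),\xi_n)_H\big|$ is exactly the summable quantity produced by Lemma~\ref{for lemma2}, so both conclusions should be reduced to comparisons against this series.

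First I would record the consequences of strong gradient-relatedness. From \eqref{eq:strong-grad-related} one has $\big|(\nabla_{\mathcal{N}}E(u_n),\xi_n)_H\big|\geq c_1\|\nabla_{\mathcal{N}}E(u_n)\|_H^2$ and $\|\xi_n\|_H\leq c_2\|\nabla_{\mathcal{N}}E(u_n)\|_H$. Feeding the first inequality into Lemma~\ref{for lemma2} gives $\sum_n\alpha_n\|\nabla_{\mathcal{N}}E(u_n)\|_H^2<\infty$, and hence $\sum_n\alpha_n\|\xi_n\|_H^2<\infty$ after using the norm control on $\xi_n$; in particular the general term satisfies $\alpha_n\|\xi_n\|_H^2\to0$. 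Next I would upgrade this to $\alpha_n\|\xi_n\|_H\to0$: since the backtracking procedure of Algorithm~\ref{alg:nonmonotone-backtracking} only shrinks the trial step, $\alpha_n\leq\alpha_{\max}$ for every $n$, so $(\alpha_n\|\xi_n\|_H)^2=\alpha_n\cdot\big(\alpha_n\|\xi_n\|_H^2\big)\leq\alpha_{\max}\,\alpha_n\|\xi_n\|_H^2\to0$. Thus $\|\alpha_n\xi_n\|_H=\alpha_n\|\xi_n\|_H\to0$, and there is an index $N$ with $\|\alpha_n\xi_n\|_H\leq\delta$ for all $n\geq N$. Because the iterates stay in $\mathcal{E}_{u_0}$ by Lemma~\ref{lem:EukCkEu0}, the local bound \eqref{lipschitz continuous} applies for $n\geq N$ and yields $\|u_{n+1}-u_n\|_H=\|\mathcal{R}_{u_n}(\alpha_n\xi_n)-u_n\|_H\leq M_\delta\alpha_n\|\xi_n\|_H\to0$, which is \eqref{eq:ukstep-goesto-0}.

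For \eqref{eq:sum-ukstep-eps} I would restrict to the index set $\mathcal{I}_\varepsilon$. On $\mathcal{I}_\varepsilon$ the threshold $\|\nabla_{\mathcal{N}}E(u_n)\|_H\geq\varepsilon$ lets me trade one factor of the gradient norm for $1/\varepsilon$, turning the linear control into a quadratic one, namely $\|\xi_n\|_H\leq c_2\|\nabla_{\mathcal{N}}E(u_n)\|_H\leq (c_2/\varepsilon)\|\nabla_{\mathcal{N}}E(u_n)\|_H^2\leq \big(c_2/(c_1\varepsilon)\big)\big|(\nabla_{\mathcal{N}}E(u_n),\xi_n)_H\big|$. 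Combining this with \eqref{lipschitz continuous} (valid for $n\geq N$ as above) bounds $\|u_{n+1}-u_n\|_H$ for such $n$ by a constant multiple of $\alpha_n\big|(\nabla_{\mathcal{N}}E(u_n),\xi_n)_H\big|$; summing over $n\in\mathcal{I}_\varepsilon$ with $n\geq N$ and appending the finitely many terms with $n<N$ then gives \eqref{eq:sum-ukstep-eps}.

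I expect the main obstacle to be the bookkeeping forced by the \emph{locality} of hypothesis \eqref{lipschitz continuous}: it is assumed only for $\|\xi\|_H\leq\delta$, so the argument must first establish $\|\alpha_n\xi_n\|_H\to0$ by purely scalar means—the summability of Lemma~\ref{for lemma2} together with the uniform bound $\alpha_n\leq\alpha_{\max}$—before the retraction estimate may be invoked, and one must carefully separate off the finitely many initial indices where it could fail. Once this ordering is respected, the remaining manipulations are elementary.
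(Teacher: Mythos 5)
Your proposal is correct and follows essentially the same route as the paper's proof: both derive $\sum_n\alpha_n\|\xi_n\|_H^2<\infty$ from Lemma~\ref{for lemma2} via \eqref{eq:strong-grad-related}, use $\alpha_n\le\alpha_{\max}$ to get $\alpha_n\|\xi_n\|_H\to0$ so that \eqref{lipschitz continuous} applies for large $n$, and on $\mathcal{I}_\varepsilon$ trade a factor of $\|\nabla_{\mathcal{N}}E(u_n)\|_H$ for $1/\varepsilon$ to bound $\|u_{n+1}-u_n\|_H$ by a constant multiple of the summable quantity $\alpha_n\big|(\nabla_{\mathcal{N}}E(u_n),\xi_n)_H\big|$. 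Your explicit handling of the finitely many initial indices where the local retraction bound may not yet apply is a minor tidiness improvement over the paper's phrasing but not a different argument.
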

\begin{proof}
The strongly gradient-related property \eqref{eq:strong-grad-related} states that
\begin{align*}
   \|\xi_n\|_H^2 \leq c_2^2\|\nabla_{\mathcal{N}}E(u_n)\|_H^2 \leq \frac{c_2^2}{c_1}|\left(\nabla_{\mathcal{N}}E(u_n),\xi_n\right)|, \quad n\geq 0.
\end{align*}
Then, Lemma~\ref{for lemma2} yields $\sum_{n=0}^{\infty} \alpha_n\|\xi_n\|_H^2 < \infty$. The uniform upper-bound for $\alpha_n$, i.e., $ \alpha_n\leq \alpha_n^0\leq\alpha_{\max}$ ($\forall n\geq 0$), leads to $\lim_{n\to\infty} \alpha_n\|\xi_n\|_H = 0$. 
By \eqref{lipschitz continuous}, for all $n$ large enough s.t. $ \alpha_n\|\xi_n\|_H <\delta$, there holds
	\begin{align*}
		\|u_{n+1}-u_n\|_H  = \|\mathcal{R}_{u_n}( \alpha_n\xi_n) - u_n\|_H\ \leq M_{\delta}  \alpha_n\|\xi_n\|_H.
	\end{align*}
Thus, \eqref{eq:ukstep-goesto-0} is verified. Further, if $\|\nabla_{\mathcal{N}}E(u_n)\|_H \geq \varepsilon$, then
	\begin{align*}
		\|u_{n+1}-u_n\|_H 
		\leq \frac{c_2M_{\delta}\alpha_n}{\varepsilon} \|\nabla_{\mathcal{N}}E(u_n)\|_H^2
        \leq \frac{c_2M_{\delta} \alpha_n}{c_1\varepsilon} \left|\left(\nabla_{\mathcal{N}}E(u_n),\xi_n\right)_H\right|,
  \end{align*}
and \eqref{eq:sum-ukstep-eps} follows straightforwardly from Lemma~\ref{for lemma2}.
\end{proof}

Based on Lemma~\ref{lem:ukstep-estimates}, the proof of subsequence convergence is given as follows. 
	\begin{proof}[Proof of the assertion (b) in Theorem~\ref{theorem-global-convergence}]
		Noting that $\{E(u_n)\}$ is bounded according to Lemma~\ref{for lemma2}, by the PS condition on the Nehari manifold $\mathcal{N}$, it suffices to verify 
		\begin{align}\label{eq:liminfG0}
		\liminf_{n\to\infty} \|\nabla_{\mathcal{N}}E(u_n)\|_H=0.
		\end{align}
  For the sake of proof by contradiction, suppose that $\|\nabla_{\mathcal{N}}E(u_n)\|_H\geq \varepsilon_0>0$ for some $\varepsilon_0>0$ and all $n>K$, where $K$ is a positive constant. Lemma~\ref{lem:ukstep-estimates} admits $\sum_{n=K+1}^{\infty}\|u_{n+1}-u_n\|_H<\infty$. Then $\{u_n\}$ is a Cauchy sequence in $H$ and converges to some $\bar{u}\in\mathcal{N}$. By the continuity, $\|\nabla_{\mathcal{N}}E(\bar{u})\|_H\geq\varepsilon_0>0$, i.e., $\bar{u}$ is not a critical point. This contradicts the assertion (a) in Theorem~\ref{theorem-global-convergence}. Thus \eqref{eq:liminfG0} holds. 
\end{proof}

\subsubsection{Whole sequence convergence}
% \subsection{Whole sequence convergence}\label{sec:proof-cvg-c}
We prove the last assertion (c) to finish the proof of Theorem~\ref{theorem-global-convergence} by establishing the convergence of the whole sequence $\{u_n\}$ to an isolated critical point $u_*$ appearing as an accumulation point. With properties \eqref{eq:ukstep-goesto-0}-\eqref{eq:sum-ukstep-eps} in Lemma~\ref{lem:ukstep-estimates} prepared, the rest of the proof of Theorem~\ref{theorem-global-convergence} can be done by the contradiction argument and using the isolation assumption of the critical point $u_*$. The basic idea is similar to that in the proof of \cite[Theorem~2.4]{Zhou2017CAMC} in which an improved convergence result for the LMM was proved.

\begin{figure}[H]
\centering
\vspace{-1ex}
\includegraphics[width=0.55\textwidth]{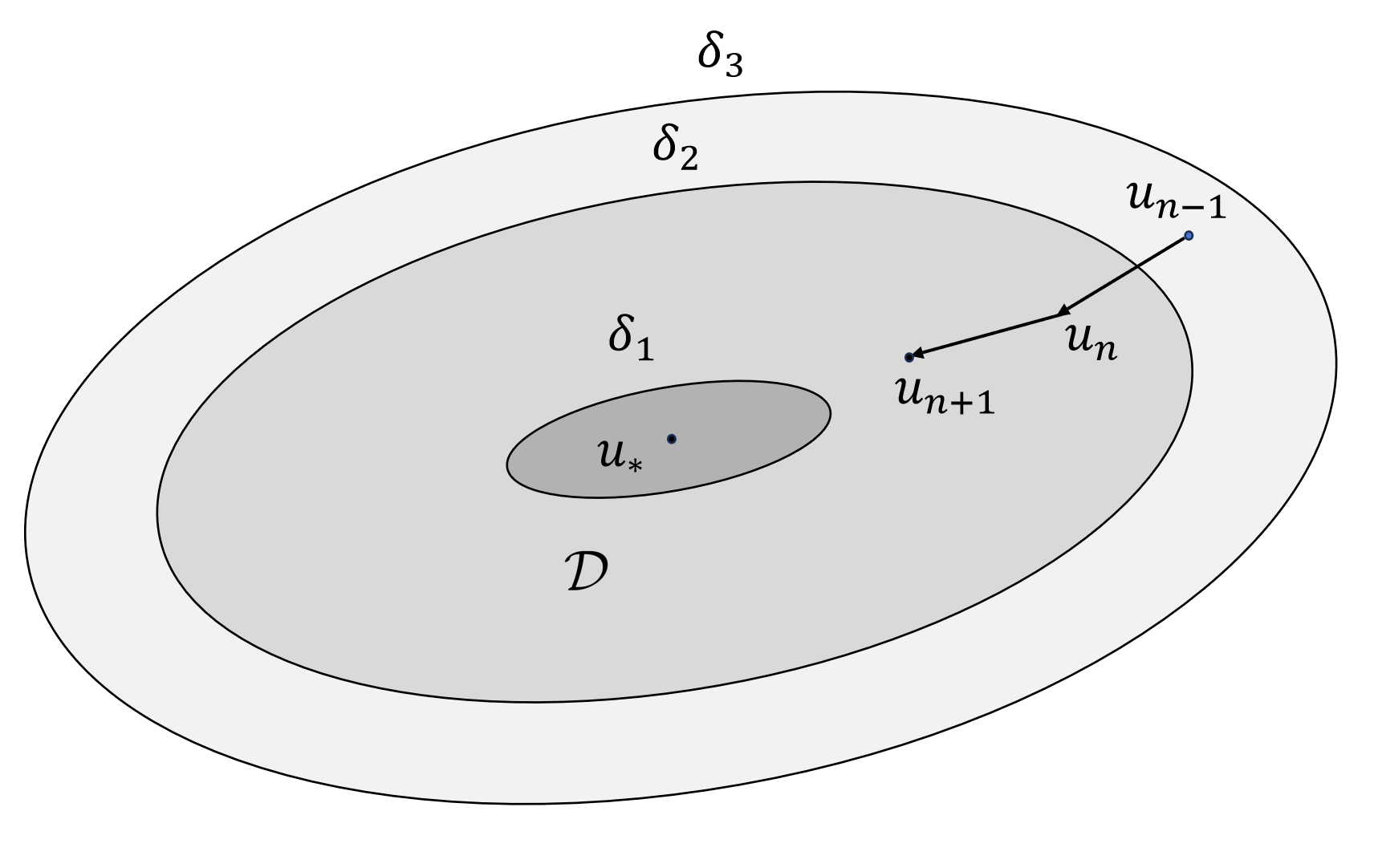}
\vspace{-1ex}
\caption{For sufficiently large $n$, $\|u_{n+1}-u_{n}\|<(\delta_2-\delta_1)/2$ and the process crossing the region $\mathcal{D}=\{u:\delta_1<\|u-u_*\|_H<\delta_2\}$ defines at least two points in $\{u_n\}$ inside $\mathcal{D}$.}
\label{fig.crossing_D}
\end{figure}

	\begin{proof}[Proof of the assertion (c) in Theorem~\ref{theorem-global-convergence}]
		What needs to be proved is that for any neighborhood of $u_*$, there are only a finite number of points in $\{u_n\}$ outside it. Suppose for the sake of contradiction that there is some constant $\delta_3>0$ s.t. $\|u -u_*\|_H >\delta_3$ holds for infinitely many points $u$ in $\{u_n\}$. For any $\delta_1,\delta_2$ satisfying $0<\delta_1<\delta_2<\delta_3$, we know that the region $\{u:\|u-u_*\|_H<\delta_1\}$ contains infinitely many points in $\{u_n\}$ since $u_*$ is an accumulation point, and that $\|u_{n+1} - u_n\|_H < (\delta_2-\delta_1)/2$ holds for all sufficiently large $n$ due to \eqref{eq:ukstep-goesto-0}. Therefore, $\{u_n\}$ travels an infinite number of times between regions $\{u:\|u-u_*\|_H<\delta_1\}$ and $\{u:\|u-u_*\|_H>\delta_3\}$ by crossing the region $\mathcal{D}=\{u:\delta_1<\|u-u_*\|_H<\delta_2\}$. As illustrated in Figure~\ref{fig.crossing_D}, when $n$ is sufficiently large, every crossing process defines at least two points in $\{u_n\}$ inside $\mathcal{D}$, and at least a distance of $(\delta_2-\delta_1)/2$ needs to be covered during this process. As a result, the index set $\mathcal{I} = \{n\in \mathbb{N}: \delta_1<\|u_n-u_*\|_H<\delta_2\}$ possesses infinitely many elements, and the total distance traveled by the subsequence $\{u_n\}_{n\in\mathcal{I}}$ is infinite, i.e., $\sum_{n\in\mathcal{I}} \|u_{n+1} -u_{n}\|_H = \infty$. From \eqref{eq:sum-ukstep-eps}, for any $\varepsilon>0$, there holds 
		\[ \sum_{n\in\mathcal{I},\|\nabla E(u_n)\|_H < \varepsilon}\|u_{n+1} -u_{n}\|_H = \infty, \] 
		and thus, there are infinitely many points $u$ in $\{u_n\}_{n\in\mathcal{I}}$ satisfying $\|\nabla_{\mathcal{N}}E(u)\|_H<\varepsilon$. Due to the arbitrariness of $\varepsilon>0$ and the PS condition on the Nehari manifold $\mathcal{N}$, the subsequence $\{u_n\}_{n\in\mathcal{I}}$ has an accumulation point $u_*'\in\mathcal{N}\cap\mathcal{D}$, which is also a critical point by the assertion (a) in Theorem~\ref{theorem-global-convergence}. This contradicts the isolation assumption for $u_*$ since $\delta_1,\delta_2$ can be chosen arbitrarily to satisfy $0<\delta_1<\delta_2<\delta_3$. Thus, the conclusion $\lim_{n\to\infty}\|u_n-u_*\|_H=0$ holds.
	\end{proof}

% \vspace{-1.5ex}
\begin{remark}
Similar to Theorem~\ref{theorem-global-convergence}, the subsequence convergence and whole sequence convergence for LMMs have also been obtained in~\cite{LMMConvergence2002,LXY-Goldstein,LXY-NWPLMM,LXY2024JCM,Zhou2017CAMC}. The analysis for LMMs is established under a two-level iterative scheme for the minimax problem of the form \eqref{minimax characters} and relies on the properties of the local peak selection. While Theorem~\ref{theorem-global-convergence} is built under an iterative framework \eqref{eq:NMOM-IterativeScheme} of manifold optimization and fully utilizes the structure of Nehari manifold and the properties of the retraction. In addition, the weak convergence technique used in this paper could avoid a certain weak continuity assumption for the general descent directions in LMMs \cite{LXY-NWPLMM}. Actually, some techniques in our proof provide a possibility to generalize certain Riemannian optimization methods to the infinite-dimensional setting.
\end{remark}
% \vspace{-1ex}

\section{Applications to semilinear elliptic PDEs}\label{Numerical experiments}
In this section, we apply the {\bf Algorithm~\ref*{alg:NMOM}$'$} to compute unstable ground states (i.e., 1-saddles of the associated variational functional) of a class of semilinear elliptic PDEs with certain variational structures. We first describe the theoretical and numerical details for a general model and then present the basic numerical results on two typical examples, including the stationary nonlinear Schr\"odinger equation (NLSE) and the H\'enon equation. Numerical comparisons of the NMOM and LMM are also provided.

% \vspace{-.8ex}
\subsection{Preliminaries}
Consider the following boundary value problem (BVP) of the semilinear elliptic PDEs
% \vspace{-.6ex}
	\begin{align}\label{our problem}
	\left\{\begin{aligned}
	-\Delta u(\mathbf{x})+a(\mathbf{x})u(\mathbf{x})-g(\mathbf{x})|u(\mathbf{x})|^{p-1}u(\mathbf{x})&=0,\quad \mbox{in}\;\; \Omega, \\
	u(\mathbf{x})&=0\quad \mbox{on}\;\; \partial\Omega.
	\end{aligned}\right.
% \vspace{-.8ex}%
	\end{align}%
Here $\Omega\subset\mathbb{R}^N$ is a bounded domain with the Lipschitz boundary $\partial\Omega$, $a\in L^{\infty}(\Omega)$ and $g\in L^{\infty}(\Omega)$ are given functions satisfying $a(\mathbf{x}) \geq a_0 > -\lambda_1$ and $g(\mathbf{x})> 0$ (a.e. $\mathbf{x}\in\Omega$) with $a_0$ a constant and $\lambda_1>0$ the first Dirichlet eigenvalue of $-\Delta $ on $\Omega$. The exponent $p$ is subcritical, i.e., $1<p<\infty$ for $N=1,2$ and $1<p<(N+2)/(N-2)$ for $N\geq3$. The existence and multiplicity of solutions to the BVP \eqref{our problem} have been widely studied, see, e.g., \cite{Rabinowitz1986,Struwe2000}. For simplicity, we will omit the variable $\mathbf{x}$ in the functions $a$, $g$, $u$, etc., in the following.

Take $H := H_0^1(\Omega)$ with the inner product and norm defined as
% \vspace{-.6ex}
\[
(u,v)_H = \int_{\Omega}\left(\nabla u\cdot \nabla v + auv\right) d\mathbf{x}, \quad
  \|u\|_{H}=\sqrt{\left(u,u\right)_H},\quad u,v\in H.
% \vspace{-.8ex}%
\]%
It is noted that $\|\cdot\|_H$ is equivalent to the usual $H_0^1$-norm $\|u\|=\left(\int_{\Omega}|\nabla u|^2 d\mathbf{x}\right)^{1/2}$ since $a\geq a_0>-\lambda_1$ is uniformly bounded. Define the functional
% \vspace{-.6ex}%
\begin{align}\label{energy}
  E(u)=\frac12\|u\|_H^2-\frac{1}{p+1}\int_{\Omega}g|u|^{p+1}d\mathbf{x},\quad u\in H.
% \vspace{-.8ex}%
\end{align}%
It is easy to check that $E \in C^2(H,\mathbb{R})$ and for all $u,v,w\in H$,
% \vspace{-.6ex}%
\begin{align}
  \langle E'(u),v\rangle=&\int_{\Omega}\left(\nabla u\cdot\nabla v+auv-g|u|^{p-1}uv\right)d\mathbf{x}, \quad \label{eq:ellip-DE} \\
  \langle E''(u)w,v\rangle=&\int_{\Omega}\left(\nabla w\cdot\nabla v+awv-pg|u|^{p-1}wv\right)d\mathbf{x}. \label{eq:ellip-DDE}
% \vspace{-.8ex}%
\end{align}%
From \eqref{eq:ellip-DE}, weak solutions of \eqref{our problem} correspond to critical points of the functional $E$ in \eqref{energy}, i.e., \eqref{energy} is the variational functional for \eqref{our problem}. Clearly, the Nehari manifold associated to the functional $E$ in \eqref{energy} is given as
\begin{equation}\label{manifold}
	\mathcal{N}=\left\{u\in H\backslash\{0\}:\|u\|_H^2
		 = \int_{\Omega}g|u|^{p+1}d\mathbf{x}\right\},
\end{equation}
with its tangent space $T_u\mathcal{N}= \left\{\xi\in H: 2(u,\xi)_H=(p+1)\int_{\Omega}g|u|^{p-1}u\xi d\mathbf{x}\right\}$ at $u\in\mathcal{N}$.

\subsection{Verification of basic assumptions and the PS condition}
It is easy to verify that the functional $E$ in \eqref{energy} satisfies the assumptions (A1)-(A3). Clearly, for each $v \in H \backslash \{0\}$, $E(sv)$ w.r.t. $s\in(0,\infty)$ has a unique critical point 
\begin{align}\label{eq:sv-expr}
  s_v=\left(\frac{\|v\|_H^2}{\int_{\Omega}g|v|^{p+1}d\mathbf{x}}\right)^{\frac{1}{p-1}}>0
\end{align}
with the fact that $s_vv\in\mathcal{N}$. Moreover, since $p>1$, one has 
\[\langle E''(u)u,u \rangle = \int_{\Omega} \left(|\nabla u|^2 +a u^2 - pg|u|^{p+1}\right) d\mathbf{x} = (1-p)\int_{\Omega} g|u|^{p+1}d\mathbf{x} <0,\quad \forall\, u \in \mathcal{N}.\]
Hence (A1) and (A2) hold. Furthermore, the following lemma (see its proof in Appendix~\ref{sec:appendix-proof-lem:E0bounds}) shows that (A3) also holds for the functional \eqref{energy}.
\begin{lemma}\label{lem:E0bounds}
For the Nehari manifold $\mathcal{N}$ \eqref{manifold}, there exists a constant $\sigma_0>0$ s.t.
\[ \|u\|_H\geq \sigma_0 , \quad \|u\|_{L^{p+1}}\geq \sigma_0,\quad \int_{\Omega}g|u|^{p+1}d\mathbf{x}\geq \sigma_0, \quad\forall\;u\in\mathcal{N}. \]
\end{lemma}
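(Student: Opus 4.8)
The plan is to establish all three lower bounds simultaneously by exploiting the defining relation of the Nehari manifold together with the subcritical Sobolev embedding. Recall that every $u\in\mathcal{N}$ satisfies $\|u\|_H^2=\int_{\Omega}g|u|^{p+1}d\mathbf{x}$. The key structural fact I would use is that the three quantities in question are comparable: since $g\in L^{\infty}(\Omega)$ with $0<g(\mathbf{x})\leq\|g\|_{L^{\infty}}$, we have $\int_{\Omega}g|u|^{p+1}d\mathbf{x}\leq\|g\|_{L^{\infty}}\|u\|_{L^{p+1}}^{p+1}$, which already ties the $H$-norm bound to the $L^{p+1}$-norm bound. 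So it suffices to produce a single positive lower bound, say on $\|u\|_{L^{p+1}}$, and the other two will follow.

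First I would invoke the continuous Sobolev embedding $H=H_0^1(\Omega)\hookrightarrow L^{p+1}(\Omega)$, which is valid precisely because $p$ is subcritical in the sense stated after \eqref{our problem}; this gives a constant $C_{\mathrm{emb}}>0$ with $\|u\|_{L^{p+1}}\leq C_{\mathrm{emb}}\|u\|_H$ for all $u\in H$. Combining this with the Nehari identity and the $L^{\infty}$ bound on $g$, I would chain the inequalities
\begin{align*}
\|u\|_H^2=\int_{\Omega}g|u|^{p+1}d\mathbf{x}\leq\|g\|_{L^{\infty}}\|u\|_{L^{p+1}}^{p+1}\leq\|g\|_{L^{\infty}}C_{\mathrm{emb}}^{p+1}\|u\|_H^{p+1}.
\end{align*}
Since $u\in\mathcal{N}\subset H\backslash\{0\}$, we may divide by $\|u\|_H^2>0$ to obtain $1\leq\|g\|_{L^{\infty}}C_{\mathrm{emb}}^{p+1}\|u\|_H^{p-1}$. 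Because $p>1$, solving for $\|u\|_H$ yields a strictly positive lower bound depending only on $p$, $\|g\|_{L^{\infty}}$ and $C_{\mathrm{emb}}$; call it $\sigma_0$.

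Having secured $\|u\|_H\geq\sigma_0$, the remaining two bounds are immediate consequences and I would simply read them off. The Nehari identity gives $\int_{\Omega}g|u|^{p+1}d\mathbf{x}=\|u\|_H^2\geq\sigma_0^2$, and the embedding gives $\|u\|_{L^{p+1}}\geq C_{\mathrm{emb}}^{-1}\|u\|_H\geq C_{\mathrm{emb}}^{-1}\sigma_0$; after possibly shrinking $\sigma_0$ to the minimum of these three positive constants, all three inequalities hold with a common $\sigma_0>0$. I do not anticipate a genuine obstacle here, as the argument is a standard Nehari-manifold estimate; the only point requiring care is the correct invocation of the subcritical embedding (ensuring the exponent $p+1$ lies within the admissible Sobolev range so that $C_{\mathrm{emb}}$ exists), and tracking the direction of the inequalities so that the division by $\|u\|_H^2$ and the use of $p-1>0$ combine to give a lower rather than an upper bound.
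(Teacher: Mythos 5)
Your overall strategy is the same as the paper's: chain the Nehari identity $\|u\|_H^2=\int_{\Omega}g|u|^{p+1}d\mathbf{x}$, the bound $g\leq\|g\|_{L^{\infty}}$, and the subcritical Sobolev embedding $\|u\|_{L^{p+1}}\leq C_{\mathrm{emb}}\|u\|_H$ into a single inequality, divide by a positive power of the norm, and use $p>1$ to extract a lower bound. The paper runs this chain starting from $\|u\|_{L^{p+1}}^2$ and first bounds $\|u\|_{L^{p+1}}$ from below; you start from $\|u\|_H^2$ and first bound $\|u\|_H$ from below. Both versions of the first step are correct.

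However, there is a genuine error in how you read off the $L^{p+1}$ bound afterwards. You claim ``the embedding gives $\|u\|_{L^{p+1}}\geq C_{\mathrm{emb}}^{-1}\|u\|_H$,'' but the embedding runs in the opposite direction: $\|u\|_{L^{p+1}}\leq C_{\mathrm{emb}}\|u\|_H$ yields an \emph{upper} bound on $\|u\|_{L^{p+1}}$ in terms of $\|u\|_H$, and no reverse inequality holds on $H_0^1(\Omega)$ (highly oscillatory functions have large $H$-norm and small $L^{p+1}$-norm). This is precisely the ``direction of the inequalities'' issue you flagged as the only delicate point. The paper's ordering avoids it: once $\|u\|_{L^{p+1}}\geq\tilde{\sigma}$ is known, the embedding legitimately transfers this to $\|u\|_H\geq\tilde{\sigma}/\sqrt{C}$, since a lower bound on the smaller quantity forces a lower bound on the larger one. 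With your ordering the fix is to return to the Nehari identity instead of the embedding: from $\sigma_0^2\leq\|u\|_H^2=\int_{\Omega}g|u|^{p+1}d\mathbf{x}\leq\|g\|_{L^{\infty}}\|u\|_{L^{p+1}}^{p+1}$ one gets $\|u\|_{L^{p+1}}\geq\bigl(\sigma_0^2/\|g\|_{L^{\infty}}\bigr)^{1/(p+1)}>0$. With that one-line repair the argument is complete and equivalent to the paper's.
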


Recalling standard results in the critical point theory (see, e.g., \cite{Rabinowitz1986}), the energy functional $E$ satisfies the PS condition in $H$. The following result is sufficient for the PS condition on the Nehari manifold $\mathcal{N}$ \eqref{manifold} (see its proof in Appendix~\ref{sec:appendix-proof-lem:5.2}).  

\begin{lemma}\label{lem:5.2}
For a sequence $\{u_n\}$ on the Nehari manifold $\mathcal{N}$ \eqref{manifold}, if $\{E(u_n)\}$ is bounded and $\nabla_{\mathcal{N}} E(u_n) \to 0$ when $n\to \infty$, then $\nabla E(u_n) \to 0$ as $n\to \infty$.
\end{lemma}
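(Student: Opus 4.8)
The plan is to exploit the $H$-orthogonal splitting of the full gradient into its Riemannian part and a part along $\nabla G(u_n)$, and to show the latter vanishes in the limit. Recall from \eqref{eq:Rgrad=ProjHgrad} that, on $\mathcal{N}$,
\[
\nabla E(u_n)=\nabla_{\mathcal{N}}E(u_n)+\alpha_n\nabla G(u_n),\qquad
\alpha_n:=\frac{(\nabla G(u_n),\nabla E(u_n))_H}{\|\nabla G(u_n)\|_H^2},
\]
with $\nabla_{\mathcal{N}}E(u_n)\perp\nabla G(u_n)$ in $H$. By the Pythagorean identity, $\|\nabla E(u_n)\|_H^2=\|\nabla_{\mathcal{N}}E(u_n)\|_H^2+\alpha_n^2\|\nabla G(u_n)\|_H^2$, so since the first term tends to $0$ by hypothesis, it suffices to show $|\alpha_n|\,\|\nabla G(u_n)\|_H\to0$. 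First I would rewrite $\alpha_n$ more usefully by testing the splitting against $u_n$: as $(\nabla E(u_n),u_n)_H=\langle E'(u_n),u_n\rangle=0$ on $\mathcal{N}$, this gives $\alpha_n=-(\nabla_{\mathcal{N}}E(u_n),u_n)_H/(\nabla G(u_n),u_n)_H$.

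Next I would establish the three uniform bounds controlling this quotient. \textbf{Boundedness of $\{u_n\}$:} on $\mathcal{N}$ the constraint $\|u_n\|_H^2=\int_\Omega g|u_n|^{p+1}\,d\mathbf{x}$ reduces the energy to $E(u_n)=\tfrac{p-1}{2(p+1)}\|u_n\|_H^2$, so boundedness of $\{E(u_n)\}$ together with $p>1$ forces $\|u_n\|_H\le M$ for some $M$. \textbf{Boundedness of $\{\nabla G(u_n)\}$:} from $\langle G'(u),\phi\rangle=2(u,\phi)_H-(p+1)\int_\Omega g|u|^{p-1}u\phi\,d\mathbf{x}$, the subcritical Sobolev embedding $H_0^1(\Omega)\hookrightarrow L^{p+1}(\Omega)$ and Hölder's inequality bound $\|\nabla G(u_n)\|_H=\|G'(u_n)\|_{H^*}$ by a constant depending only on $M$, $\|g\|_{L^\infty}$ and the embedding constant. \textbf{Lower bound on the denominator:} since $(\nabla G(u_n),u_n)_H=\langle E''(u_n)u_n,u_n\rangle=(1-p)\int_\Omega g|u_n|^{p+1}\,d\mathbf{x}$, Lemma~\ref{lem:E0bounds} yields $|(\nabla G(u_n),u_n)_H|\ge(p-1)\sigma_0>0$.

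Combining these, $|\alpha_n|\le\|\nabla_{\mathcal{N}}E(u_n)\|_H\,\|u_n\|_H/((p-1)\sigma_0)\le M\,\|\nabla_{\mathcal{N}}E(u_n)\|_H/((p-1)\sigma_0)\to0$, and multiplying by the bounded factor $\|\nabla G(u_n)\|_H$ gives $|\alpha_n|\,\|\nabla G(u_n)\|_H\to0$. Substituting back into the Pythagorean identity then yields $\|\nabla E(u_n)\|_H\to0$, as claimed. I do not expect a serious obstacle: once the structural facts are in place, the argument is bookkeeping of uniform bounds. The only step requiring genuine care is the boundedness of $\{\nabla G(u_n)\}$, which is precisely where the subcriticality of $p$ (and hence the continuous Sobolev embedding) is used; the remaining ingredients follow from the Nehari constraint and the uniform lower bound of Lemma~\ref{lem:E0bounds}.
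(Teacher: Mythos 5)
Your proposal is correct and follows essentially the same route as the paper's proof in Appendix~B: the same orthogonal decomposition \eqref{eq:Rgrad=ProjHgrad}, tested against $u_n$ using $(\nabla E(u_n),u_n)_H=0$, combined with the boundedness of $\|u_n\|_H$ from the energy identity $E(u_n)=\tfrac{p-1}{2(p+1)}\|u_n\|_H^2$, the upper bound on $\|\nabla G(u_n)\|_H$ via H\"older and the subcritical Sobolev embedding, and the lower bound $|(\nabla G(u_n),u_n)_H|\ge(p-1)\sigma_0$ from Lemma~\ref{lem:E0bounds}. Your explicit formula for the coefficient $\alpha_n$ and the Pythagorean identity are only a cosmetic repackaging of the paper's passage to the limit in \eqref{eq:DEun-DNEun}--\eqref{eq:DEun-DNEun-un}.
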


In conclusion, all basic assumptions (A1)-(A3) and the PS condition on the Nehari manifold \eqref{manifold} are verified for the BVP \eqref{our problem}. Hence, the Nehari manifold $\mathcal{N}$ \eqref{manifold} is indeed a closed $C^1$ submanifold of $H$. This also establishes the mathematical justification for the existence and variational characterization of the ground state (1-saddle) to the BVP \eqref{our problem} by Theorems~\ref{theorem: 2.1} and \ref{thm:existence}. As a result, the NMOM is applicable to compute the ground state.

\subsection{Details for the numerical implementation}\label{sec:numer-details}
In our implementation of {\bf Algorithm~\ref*{alg:NMOM}$'$} to the BVP \eqref{our problem}, the steepest descent direction (i.e., the negative Riemannian gradient direction) and the Nehari retraction \eqref{Nehari retraction} is employed. Here we discuss the details for their efficient computations.

{\em (i) Computation of Riemannian gradient.}
According to \eqref{eq:Rgrad=ProjHgrad}, the Riemannian gradient $\nabla_{\mathcal{N}}E(u)\in T_{u}\mathcal{N}$ at $u\in\mathcal{N}$ can be computed by $H$-gradients $\nabla E(u)$ and $\nabla G(u)$. In fact, according to \eqref{eq:Hgrad-def} and \eqref{eq:ellip-DE}, $\varphi_1:=\nabla E(u)$ and $\varphi_2:=\nabla G(u)$ are, respectively, determined by linear problems 
\begin{align*}
\int_{\Omega}\nabla\varphi_1\cdot\nabla\phi + a\varphi_1\phi\,d\mathbf{x} &=\int_{\Omega}\left(\nabla u\cdot\nabla\phi+au\phi-g|u|^{p-1}u\phi\right)d\mathbf{x}, \;\; \forall \phi \in H,\quad\mbox{and} \\
 \int_{\Omega}\nabla\varphi_2\cdot\nabla\phi + a\varphi_2\phi\,d\mathbf{x} &=\int_{\Omega}\left(2\nabla u\cdot\nabla\phi+2au\phi-(p+1)g|u|^{p-1}u\phi\right)d\mathbf{x},\;\; \forall \phi \in H. 
\end{align*}
Then $\psi:= u-\varphi_1 = (2u-\varphi_2)/ (p+1) \in H $ solves the linear elliptic equation
\begin{align}\label{eq:poisson}
-\Delta \psi+a\psi=g|u|^{p-1}u,\quad \psi|_{\partial\Omega}=0. 
\end{align}
As a result, once the solution to \eqref{eq:poisson} is obtained, the two $H$-gradients are given as $\nabla E(u)=u-\psi$ and $\nabla G(u)=2u-(p+1)\psi$. Thus, for the BVP \eqref{our problem}, each computation of the Riemannian gradient only requires to solve one linear elliptic equation.

{\em (ii) Explicit Nehari retraction.}
For $v\in H\backslash\{0\}$, $\rho(v)$ in the Nehari retraction \eqref{Nehari retraction} is expressed as the unique positive solution of the equation $\langle E'(sv),v\rangle=0$ w.r.t.~$s$, i.e., $\rho(v)=s_v$, given in \eqref{eq:sv-expr}. Thus, the Nehari retraction \eqref{Nehari retraction} for the problem \eqref{our problem} takes an explicit formulation for $(u,\xi)\in T\mathcal{N}$:
\begin{align}\label{eq:NehariR}
\mathcal{R}_{u}(\xi)=\rho(u+\xi)(u+\xi)
=\left(\frac{\|u+\xi\|^2_H}{\int_{\Omega}g|u+\xi|^{p+1}d\mathbf{x}}\right)^{\frac{1}{p-1}}(u+\xi).
\end{align}
Further, we have the following property (see its proof in Appendix~\ref{sec:appendix-proof-lem:5.3}).  	
\begin{lemma}\label{lem:5.3}
	The Nehari retraction \eqref{eq:NehariR} satisfies the condition \eqref{lipschitz continuous}. 
\end{lemma}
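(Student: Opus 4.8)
The plan is to exploit the explicit quotient structure of $\rho$ in \eqref{eq:NehariR} together with the subcritical Sobolev embedding $H_0^1(\Omega)\hookrightarrow L^{p+1}(\Omega)$ to extract a genuine Lipschitz estimate near the zero section by hand, rather than relying on any compactness argument. First I would observe that the energy sublevel set $\mathcal{E}_{u_0}$ in \eqref{eq:Energy-sublevelset} is bounded in $H$: for every $u\in\mathcal{N}$ the constraint $\|u\|_H^2=\int_\Omega g|u|^{p+1}\,d\mathbf{x}$ reduces \eqref{energy} to $E(u)=\frac{p-1}{2(p+1)}\|u\|_H^2$, so $E(u)\le E(u_0)$ forces $\|u\|_H\le R_0:=\bigl(\tfrac{2(p+1)}{p-1}E(u_0)\bigr)^{1/2}$. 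Then, writing $w=u+\xi$ and using $\rho(u)=1$ (since $u\in\mathcal{N}$), I would split
\[
\mathcal{R}(u,\xi)-u=\bigl(\rho(w)-1\bigr)u+\rho(w)\,\xi,
\]
so that $\|\mathcal{R}(u,\xi)-u\|_H\le|\rho(w)-1|\,\|u\|_H+\rho(w)\,\|\xi\|_H$. The task thus reduces to proving a bound $|\rho(w)-1|\le C\|\xi\|_H$ and a uniform upper bound on $\rho(w)$.

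To achieve this, set $\Psi(v)=\|v\|_H^2\big/\int_\Omega g|v|^{p+1}\,d\mathbf{x}$, so that $\rho=\Psi^{1/(p-1)}$ and $\Psi(u)=1$. Writing the difference as a single fraction,
\[
\Psi(w)-\Psi(u)=\frac{\bigl(\|w\|_H^2-\|u\|_H^2\bigr)\int_\Omega g|u|^{p+1}\,d\mathbf{x}-\|u\|_H^2\int_\Omega g\bigl(|w|^{p+1}-|u|^{p+1}\bigr)\,d\mathbf{x}}{\int_\Omega g|w|^{p+1}\,d\mathbf{x}\cdot\int_\Omega g|u|^{p+1}\,d\mathbf{x}},
\]
I would estimate numerator and denominator separately. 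The quadratic difference is controlled by expansion, $|\,\|w\|_H^2-\|u\|_H^2\,|=|2(u,\xi)_H+\|\xi\|_H^2|\le(2R_0+\delta)\|\xi\|_H$. For the nonlinear term I would use the elementary inequality $|\,|a|^{p+1}-|b|^{p+1}\,|\le(p+1)(|a|^p+|b|^p)|a-b|$, followed by H\"older's inequality with exponents $\tfrac{p+1}{p}$ and $p+1$ and the embedding $\|\cdot\|_{L^{p+1}}\le C_S\|\cdot\|_H$, to obtain $\bigl|\int_\Omega g(|w|^{p+1}-|u|^{p+1})\,d\mathbf{x}\bigr|\le(p+1)\|g\|_{L^\infty}C_S^{p+1}(\|w\|_H^p+\|u\|_H^p)\|\xi\|_H$. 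Since $\|u\|_H\le R_0$ and $\|w\|_H\le R_0+\delta$, both numerator terms are $O(\|\xi\|_H)$ with a constant depending only on $R_0,\delta,p,\|g\|_{L^\infty},C_S$.

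For the denominator I would invoke Lemma~\ref{lem:E0bounds}, which gives $\int_\Omega g|u|^{p+1}\,d\mathbf{x}\ge\sigma_0$, and combine it with the Lipschitz bound just derived to get $\int_\Omega g|w|^{p+1}\,d\mathbf{x}\ge\sigma_0-C\|\xi\|_H\ge\sigma_0/2$ after shrinking $\delta$ (first fixing $\delta\le1$ to freeze the constant $C$, then taking $\delta\le\sigma_0/(2C)$). This yields $|\Psi(w)-1|\le C_3\|\xi\|_H$ with $\Psi(w)$ confined to a compact subinterval of $(0,\infty)$; since $t\mapsto t^{1/(p-1)}$ is $C^1$ and hence Lipschitz there, it follows that $|\rho(w)-1|\le C\|\xi\|_H$ while $\rho(w)$ stays uniformly bounded by some $\bar\rho$. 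Substituting into the splitting above gives $\|\mathcal{R}(u,\xi)-u\|_H\le(C R_0+\bar\rho)\|\xi\|_H$, i.e. \eqref{lipschitz continuous} with an explicit $M_\delta$.

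The main obstacle is the nonlinear term $\int_\Omega g(|w|^{p+1}-|u|^{p+1})\,d\mathbf{x}$: in the infinite-dimensional setting the mere $C^1$ regularity of $\rho$ (Lemma~\ref{lem:sigma}) does \emph{not} furnish a uniform Lipschitz bound on bounded sets, precisely because of the lack of compactness, so the estimate must be produced directly. The crucial point is that subcriticality guarantees the continuous embedding $H_0^1(\Omega)\hookrightarrow L^{p+1}(\Omega)$, which turns the $L^{p+1}$-type bound into one controlled uniformly by $\|\xi\|_H$ over the bounded set $\mathcal{E}_{u_0}$; the uniform lower bound on the denominator then follows from Lemma~\ref{lem:E0bounds} together with the smallness of $\delta$. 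This is the mechanism by which the weaker condition \eqref{lipschitz continuous}, rather than full Lipschitz continuity, is verified for the Nehari retraction.
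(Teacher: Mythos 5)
Your proof is correct, and it follows the same overall skeleton as the paper's: boundedness of $\mathcal{E}_{u_0}$ from $E(u)=\tfrac{p-1}{2(p+1)}\|u\|_H^2$ on $\mathcal{N}$, the decomposition $\mathcal{R}_u(\xi)-u=(\rho(u+\xi)-1)u+\rho(u+\xi)\xi$, the lower bound on $\int_\Omega g|u+\xi|^{p+1}\,d\mathbf{x}$ via Lemma~\ref{lem:E0bounds}, and H\"older plus the subcritical Sobolev embedding to control the nonlinear term. The one genuine difference is how the Lipschitz estimate on $\rho$ itself is produced. The paper computes the Fr\'echet derivative $\rho'(u+\xi)$ explicitly, shows $\|\rho'(u+\xi)\|_{H^*}\leq C_L$ uniformly for $u\in\mathcal{E}_{u_0}$ and $\|\xi\|_H\leq\delta$, and then applies the mean value theorem along the segment to write $\rho(u+\xi)-\rho(u)=\langle\rho'(u+\theta\xi),\xi\rangle$. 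You instead estimate the finite difference of $\Psi=\rho^{p-1}$ directly, using the pointwise inequality $\big||a|^{p+1}-|b|^{p+1}\big|\leq(p+1)(|a|^p+|b|^p)|a-b|$, and then compose with the scalar Lipschitz map $t\mapsto t^{1/(p-1)}$ on a compact subinterval of $(0,\infty)$. Your route is slightly more elementary (no differentiation of the quotient, no mean value theorem in the Banach space), at the cost of a small bookkeeping step to break the circular dependence of the constants on $\delta$ --- which you handle correctly by first freezing $\delta\leq1$ and then shrinking it. Both arguments rest on the same mechanism: subcriticality converts the $L^{p+1}$ perturbation into one controlled by $\|\xi\|_H$ uniformly over the bounded sublevel set, which is exactly what makes the weak condition \eqref{lipschitz continuous} verifiable without compactness.
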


With the fact that the steepest descent direction is strongly gradient-related as noted in Remark~\ref{re:strongg4.1}, Lemma~\ref{lem:5.3} shows that the global convergence theory described in Section~\ref{sec:Convergence} is guaranteed in our numerical computations.

{\em (iii) Algorithm parameters.}
Unless specified, the parameters in {\bf Algorithm~\ref*{alg:NMOM}$'$} are chosen as follows:
\[ \sigma =10^{-3}, \quad
\beta = 0.25, \quad
\varrho = 0.85, \quad
\alpha_{\min} = 1, \quad
\alpha_{\max} = 10, \quad
\varepsilon_{\mathrm{tol}} = 10^{-6}. \]
The trial step-size $\alpha_n^0$ in {\bf Algorithm~\ref*{alg:NMOM}$'$} is computed by 
\begin{align}\label{eq:ak0-BB}
     \alpha_n^0 = 
    \begin{cases}
    \tau_0, & \mbox{if}\;\; n=0, \\
    \min\{\max\{\alpha_{\min}, \tau_{n}^{(1)}\},\alpha_{\max}\}, & \mbox{if}\;\; n\geq1 \text{ is even}, \\  
     \min\{\max\{\alpha_{\min}, \tau_{n}^{(2)}\},\alpha_{\max}\}, & \mbox{otherwise},
    \end{cases}
\end{align}
where $\tau_0=1$, and $\tau_{n}^{(1)}$ as well as $\tau_{n}^{(2)}$ are Barzilai-Borwein type step-sizes \cite{2017AdaptiveBB,IP2018IMANUM,LXY2024JCM}:
\begin{equation}
	\tau_{n}^{(1)}=\frac{(w_{n-1},w_{n-1})_H}{\left|(w_{n-1},y_{n-1})_H\right|}, \quad \tau_{n}^{(2)}=\frac{\left|(w_{n-1},y_{n-1})_H\right|}{(y_{n-1},y_{n-1})_H},
\end{equation}
with $w_{n-1} := u_{n} - u_{n-1}$ and $y_{n-1} := \nabla_{\mathcal{N}}E(u_{n}) - \nabla_{\mathcal{N}}E(u_{n-1})$ ($n\geq1$). In addition, the initial $u_0=\rho(\tilde{v}_0)\tilde{v}_0\in\mathcal{N}$ with $\rho(\tilde{v}_0)=s_{\tilde{v}_0}$ given in \eqref{eq:sv-expr} and $ \tilde{v}_0$ chosen as
\begin{align}\label{eq:initial-v0}
\tilde{v}_0(\mathbf{x}) = 
    \begin{cases}
        (x-1)^2(x+1), & \mbox{if } \mathbf{x}=x\in\Omega = (-1,1), \\
        (1-x^2)(1-y^2), & \mbox{if }\mathbf{x}=(x,y)\in\Omega = (-1,1)^2, \\
        (1-x^2-y^2)e^{2x+y},& \mbox{if } \mathbf{x}=(x,y)\in\Omega = \{(x,y):x^2+y^2<1\}.    
    \end{cases}
\end{align}
The linear subproblem \eqref{eq:poisson} is solved by the finite element and spectral-Galerkin methods. For the finite element discretization, 1000 uniform elements are adopted for the 1D domain $\Omega = (-1,1)$, and $100\times 100$ uniform rectangular elements are adopted for the 2D domain $\Omega = (-1,1)^2$. While, for the spectral discretization in the 2D domain $\Omega = \{(x,y):x^2+y^2<1\}$, 64 Fourier basis functions and 64 Legendre-Lobatto basis functions are used, respectively, in the polar angular and the radial directions \cite{Shen1997SISC}.

\subsection{Numerical test}
In this subsection, we show numerical results of {\bf Algorithm~\ref*{alg:NMOM}$'$} for computing the ground states of two special cases of the BVP~\eqref{our problem}: 
\begin{enumerate}[(i)]
\item The {\em stationary NLSE} (by setting $g({\bf x})=1$ and $a({\bf x})=\omega|\mathbf{x}|^{2}+\lambda$ with $\omega>0$ and $\lambda>-\lambda_1$ two positive parameters in \eqref{our problem}), i.e.,
	\begin{align}\label{nonlinear Schrodinger equation}
	\left\{\begin{aligned}
	-\Delta u(\mathbf{x})+\left(\omega|\mathbf{x}|^{2}+\lambda\right) u(\mathbf{x}) &=u^{3}(\mathbf{x}) \quad \mbox{in}\;\; \Omega, \\
	u(\mathbf{x})&=0\qquad\;\;\; \mbox{on}\;\; \partial\Omega.
	\end{aligned}\right.
	\end{align}
\item {The \em H\'enon equation} (by setting $a({\bf x})=0$ and $g({\bf x})=|\mathbf{x}|^{l}$ with $l>0$ a positive parameter in \eqref{our problem}), i.e.,
	\begin{align}\label{eq:henon}
	\left\{\begin{aligned}
	\Delta u(\mathbf{x})+|\mathbf{x}|^{l}|u(\mathbf{x})|^{p-1} u(\mathbf{x}) &=0 \quad \mbox{in}\;\; \Omega, \\
	u(\mathbf{x})&=0\quad \mbox{on}\;\; \partial\Omega.
	\end{aligned}\right.
	\end{align}
\end{enumerate}
For the H\'enon equation, $\Omega$ is assumed to be radially symmetric unless specified. Then, some numerical comparisons with the LMM are given in Section~\ref{sec:comparison}.

\subsubsection{Ground states of the stationary NLSE and the H\'enon equation} \label{sec:gro_stat}
The profiles of the ground states of the stationary NLSE \eqref{nonlinear Schrodinger equation} and the H\'enon equation \eqref{eq:henon} in 1D/2D solved by {\bf Algorithm~\ref*{alg:NMOM}$'$} are shown in Figures~\ref{fig.NLS1}-\ref{fig.henon symmertry on unit circle}. From Figures ~\ref{fig.NLS1}-\ref{fig.henon symmertry on unit circle} and additional results not shown here, we can draw the observations: For the NLSE~\eqref{nonlinear Schrodinger equation}, when $\omega$ or $\lambda$ increases, the ground state of it exhibits a progressively steeper profile, both the peak value and the energy increasing (see Figure~\ref{fig.NLS1}); For the H\'enon equation \eqref{eq:henon} with fixed $p$, both the peak value and the energy of the ground state of it increase w.r.t. $l$ (see Figures~\ref{fig.1} and \ref{fig.henon symmertry on unit circle}). More interestingly, the peak position of the ground state of the H\'enon equation \eqref{eq:henon} moves from the center to the boundary, i.e., ground states change from radially symmetric to non-radially symmetric. Later in Section~\ref{sec:symmetry-breaking}, we further investigate such symmetry-breaking phenomenon of the H\'enon equation \eqref{eq:henon}.

	\begin{figure}[!t]
		\centering
  \vspace{-1ex}
		\subfloat{\includegraphics[width=0.32\textwidth,height=0.28\textwidth]{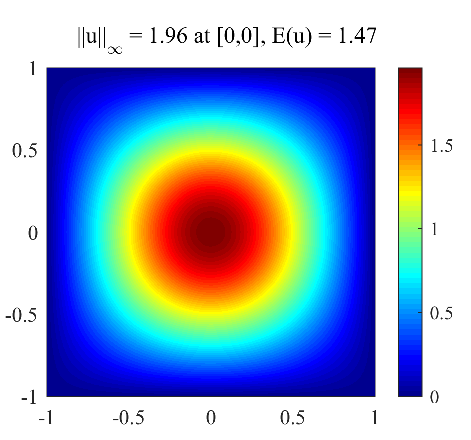}}
		\subfloat{\includegraphics[width=0.32\textwidth,height=0.28\textwidth]{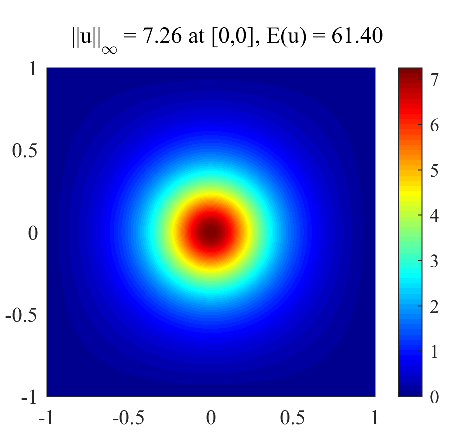}}
		\subfloat{\includegraphics[width=0.32\textwidth,height=0.28\textwidth]{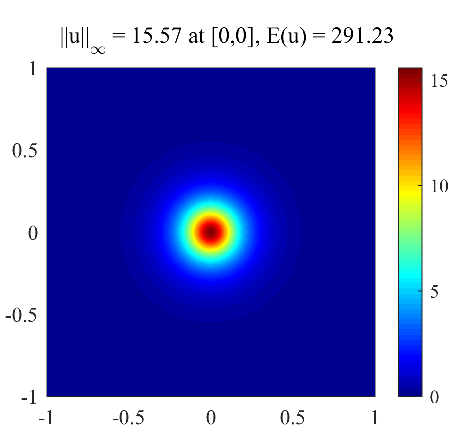}} \\[-2ex]
		\subfloat{\includegraphics[width=0.32\textwidth,height=0.28\textwidth]{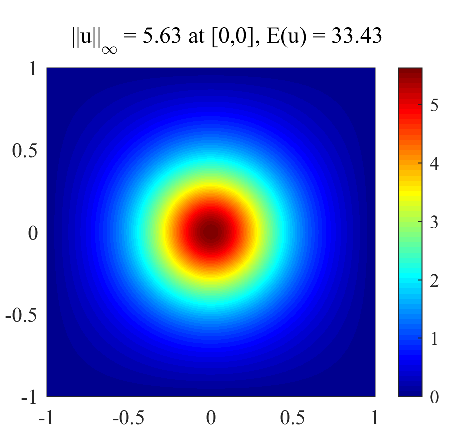}}	
		\subfloat{\includegraphics[width=0.32\textwidth,height=0.28\textwidth]{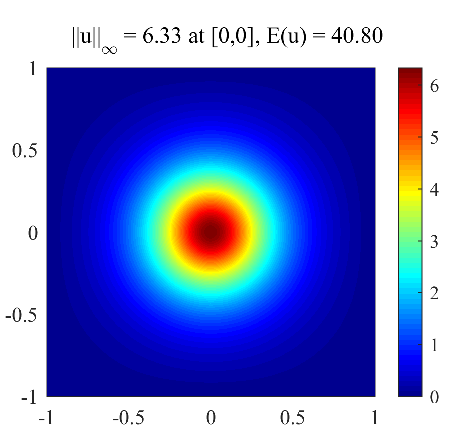}}
		\subfloat{\includegraphics[width=0.32\textwidth,height=0.28\textwidth]{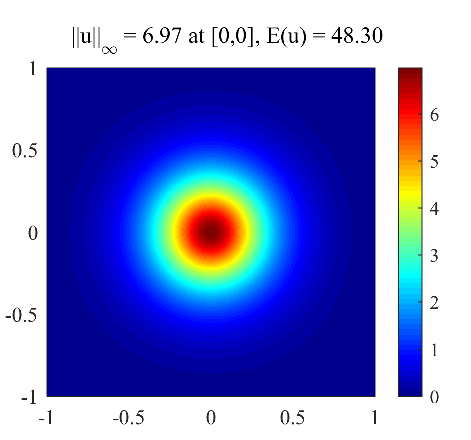}}	\\[-1ex]
		\caption{Profiles of ground states of the stationary NLSE \eqref{nonlinear Schrodinger equation} with different values of $\lambda$ and $\omega$ in $\Omega =(-1,1)^2$. The first line shows the profiles when $\omega=4$ and $\lambda=-4,10,50$ (from the left to the right). The second line shows the profiles when $\lambda = 4$ and $\omega = 10,25,45$ (from the left to the right).}
		\label{fig.NLS1}
	\end{figure}
 
\begin{figure}[!t]
		\centering
  \vspace{-1ex}
		\includegraphics[width=0.48\textwidth,height=.38\textwidth]{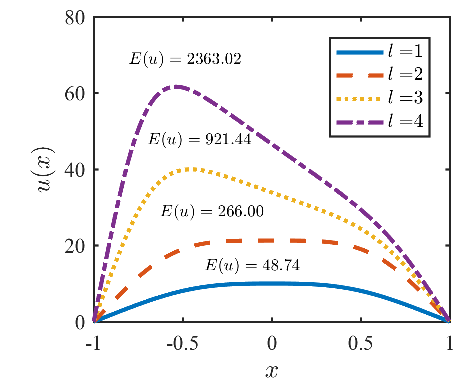} \;
		\includegraphics[width=0.48\textwidth,height=.38\textwidth]{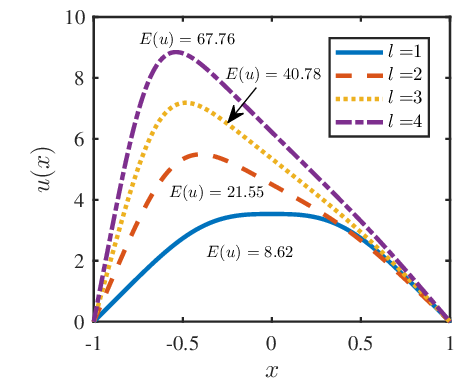} 
  \vspace{-2ex}
		\caption{Profiles of ground states of the H\'enon equation \eqref{eq:henon} in $\Omega =(-1,1)$ with different $l$ when $p=2$ (left) and $p=3$ (right).}
		\label{fig.1} 
\end{figure}

\begin{figure}[!t]
		\centering
  \vspace{-1ex}
		\subfloat{\includegraphics[width=0.32\textwidth]{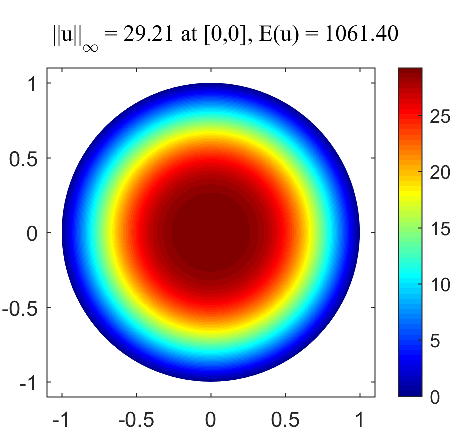}}
		\subfloat{\includegraphics[width=0.32\textwidth]{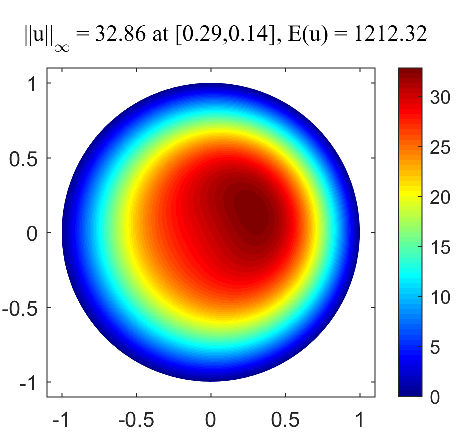}}
		\subfloat{\includegraphics[width=0.32\textwidth]{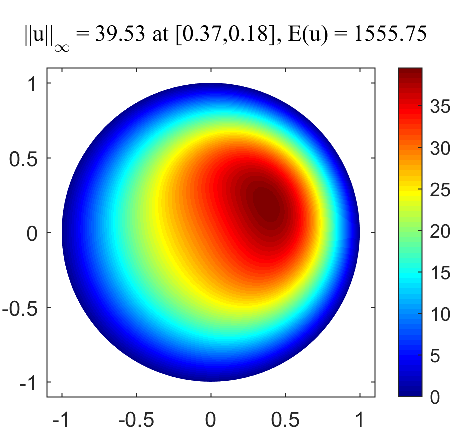}} \\[-1ex]
		\subfloat{\includegraphics[width=0.32\textwidth]{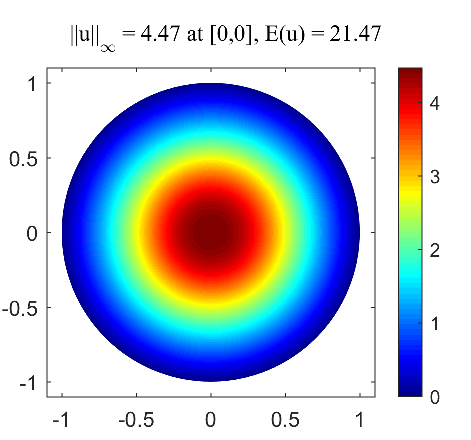}}
		\subfloat{\includegraphics[width=0.32\textwidth]{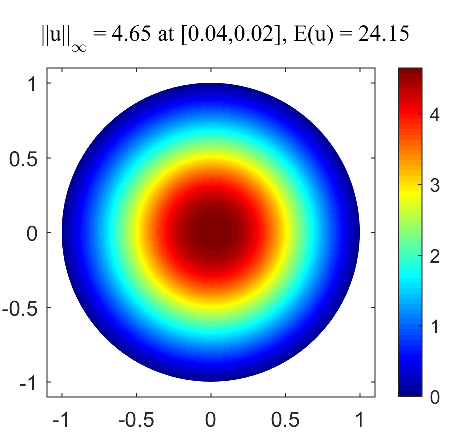}}
		\subfloat{\includegraphics[width=0.32\textwidth]{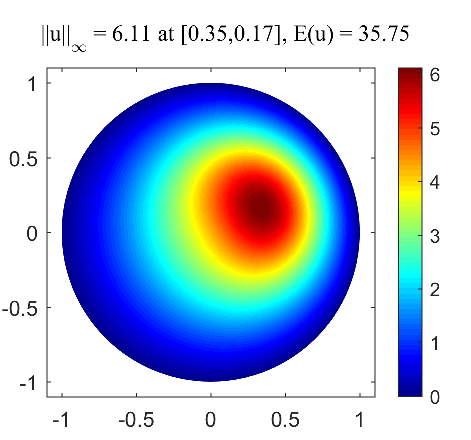}} \\[-1ex]
		\caption{Profiles of ground states of the H\'enon equation \eqref{eq:henon} with different values of $l$ and $p$ in $\Omega =\{(x,y):x^2+y^2<1\}$. The first line shows the profiles when $p=2$ and $l=1.7,1.8,2$ (from the left to the right). The second line shows the profiles when $p=3$ and $l=0.5,0.6,1$ (from the left to the right).}
        % \vspace{-10pt}
		\label{fig.henon symmertry on unit circle} 
\end{figure}

\subsubsection{Comparisons with the  LMM}\label{sec:comparison}
In this subsection, the proposed NMOM is compared with the LMM for computing ground states of the H\'enon equation \eqref{eq:henon} when $\Omega = (-1,1)^2$. Recall that the iteration scheme of the LMM for finding 1-saddles can be formulated as \cite{LMM,LMMConvergence2002,LXY-Goldstein,LXY-NWPLMM,LXY2024JCM,XieYuanZhou2012}
\begin{align}\label{eq:up_LMM}
    % \begin{cases}
        u_{n} = \rho(v_n)v_n\equiv\arg\max_{u\in\{sv_n:s>0\}} E(u),\qquad 
        v_{n+1} = \frac{v_n - \alpha_n \nabla E(u_n)}{\|v_n - \alpha_n \nabla E(u_n)\|_H},
\end{align}
with a given initial unit direction $v_0\in S_H$, where $\rho(v_n)=s_{v_n}$ is given in \eqref{eq:sv-expr}.

We first compare {\bf Algorithm~\ref*{alg:NMOM}$'$} with a nonmonotone LMM proposed in \cite{LXY2024JCM}, i.e., the LMM \eqref{eq:up_LMM} with the step-size $\alpha_n = \alpha_n^0 \beta^m$ ($m\in\mathbb{N}$) determined by the backtracking strategy for the condition
\begin{align}\label{eq:non_LMM}
E\left(\rho(v_{n+1})v_{n+1}\right) \leq C_k - \sigma \alpha_n \rho(v_n) \|\nabla E(u_n)\|_H^2,
\end{align}
with $C_k$ iteratively computed according to \eqref{eq:CkQk-update}. 

In the computation, the parameters of the nonmonotone LMM~\eqref{eq:up_LMM}-\eqref{eq:non_LMM} are consistent with those of {\bf Algorithm~\ref*{alg:NMOM}$'$} given in Section~\ref{sec:numer-details}, and the linear problem~\eqref{eq:poisson} is solved by the sine pseudospectral method with the mesh size $1/32$. The initial values for the nonmonotone LMM~\eqref{eq:up_LMM}-\eqref{eq:non_LMM} and {\bf Algorithm~\ref*{alg:NMOM}$'$} are taken as $v_0 = \tilde{v}_0/\|\tilde{v_0}\|_H$ and $u_0  = \rho(\tilde{v}_0) \tilde{v}_0$, respectively, where
\[\tilde{v}_0 = (1-x^2)(1-y^2)\left(2(x-0.5)^2 + (y+0.5)^2 \right), \]
and the stopping criterion for iterations is $\| \Delta u_n(\mathbf{x}) + |\mathbf{x}|^l |u_n(\mathbf{x})|^{p-1} u_n(\mathbf{x})\|_{\infty} < 10^{-4}$. Under different trial step-size $\alpha_n^0$, the number of iterations and the CPU time of {\bf Algorithm~\ref*{alg:NMOM}$'$} and  nonmonotone LMM~\eqref{eq:up_LMM}-\eqref{eq:non_LMM} are listed in Table~\ref{tab:eff_comp1}, from which we can draw the following observations:
\begin{itemize}
    \item When the trial step size $\alpha_n^0 = 1$, the NMOM obviously performs better in our computations, requiring fewer iterations and less CPU time than the nonmonotone LMM~\eqref{eq:up_LMM}-\eqref{eq:non_LMM} in most cases (see Table~\ref{tab:eff_comp1}).  
    \item When the trial step size is reduced to $\alpha_n^0 = 0.1$, the nonmonotone LMM~\eqref{eq:up_LMM}-\eqref{eq:non_LMM} usually outperforms the NMOM, while still cannot comparable to the NMOM under $\alpha_n^0 = 1$.   
\end{itemize} 
These results suggest that the NMOM with a suitable trial step-size could be  more  efficient than the LMM in computing the ground states of the H\'enon equation.

In addition, we compare the performance of  NMOM (Algorithm~\ref{alg:NMOM}) and  LMM iteration~\eqref{eq:up_LMM} under different fixed step-sizes (without nonmonotone search strategy). The results are listed in Table~\ref{tab:eff_comp2}. NMOM exhibits lower sensitivity to the step-size selection, successfully converging even with large step-sizes (e.g., $\alpha_n = 1$) for computing different solutions. While LMM requires suitably small step-sizes for convergence. Furthermore, it is observed that the `maximal' allowed step-sizes for the LMM decrease as the norm of the ground states $\|u_*\|_H$ increase.

\begin{table}[!t]
\centering\footnotesize
\caption{Comparison of the {\bf Algorithm~\ref*{alg:NMOM}$'$} (NMOM) and the nonmonotone LMM (nmLMM)~\eqref{eq:up_LMM}-\eqref{eq:non_LMM} in terms of the iteration count (iter) and the CPU time (time, in seconds) required to compute ground states of the H\'enon equation~\eqref{eq:henon} when $\Omega = (-1,1)^2$.}
\label{tab:eff_comp1}
\begin{tabular}{rrrrrrlrrrr}
\hline 
 \multicolumn{1}{c}{\multirow{4}{*}{$p$}} & \multicolumn{1}{c}{\multirow{4}{*}{$l$}} &
  \multicolumn{4}{c}{\multirow{2}{*}{$\alpha_n^0 = 1$}} & \multicolumn{1}{c}{\multirow{2}{*}{}} & \multicolumn{4}{c}{\multirow{2}{*}{$\alpha_n^0 = 0.1$}} \\
  \rule{0pt}{0.5em}
    &     & \multicolumn{4}{l}{}  &            & \multicolumn{4}{l}{}      
  \\ \cline{3-6} \cline{8-11}
  \rule{0pt}{1.5em} 
 &
   &
  \multicolumn{2}{c}{\multirow{1}{*} {NMOM}} &
  \multicolumn{2}{c}{\multirow{1}{*} {nmLMM}} &
  & 
  \multicolumn{2}{c}{\multirow{1}{*} {NMOM}} &
  \multicolumn{2}{c}{\multirow{1}{*} {nmLMM}}  \\ 
    &     & iter & time     & iter &  time   &  & iter & time     & iter &  time \\ \hline
\rule{0pt}{1.1em} 
1.5 & 0.5 & 39   & 0.1195   & 271  & 2.3169  &  & 450  & 1.1727   & 258  & 1.2332   \\
1.5 & 1   & 51   & 0.1485   & 291  & 2.6927  &  & 574  & 1.4504   & 381  & 2.8704   \\
2   & 1   & 95   & 0.2596   & 339  & 2.0890  &  & 996  & 2.6034   & 207  & 0.5986   \\
2   & 1.5 & 530  & 1.4369   & 273  & 1.7176  &  & 5343 & 13.8968  & 252  & 0.7948   \\
2.5 & 1.5 & 63   & 0.1981   & 205  & 1.0973  &  & 659  & 1.8261   & 466  & 1.2351   \\
2.5 & 2   & 55   & 0.1854   & 227  & 1.2851  &  & 589  & 1.6497   & 329  & 0.9757   \\
3   & 2   & 62   & 0.1706   & 221  & 1.0340  &  & 661  & 1.6499   & 45   & 0.1019   \\
3   & 2.5 & 65   & 0.1725   & 221  & 1.0561  &  & 694  & 1.7456   & 44   & 0.0985   \\ \hline 
\end{tabular}
\end{table}

\begin{table}[!t]
\centering\footnotesize
\caption{Comparison of the NMOM and LMM under fixed step-sizes in terms of the  iteration count (iter) and CPU time(time, in seconds) required to compute the ground states of~\eqref{eq:henon} when $\Omega = (-1,1)^2$. The notation `\text{---}' means the convergence failure.}
\label{tab:eff_comp2}
\begin{tabular}{llrrlrrrrr}
\hline
\multicolumn{1}{c}{\multirow{2}{*}{$p$}} & \multicolumn{1}{c}{\multirow{2}{*}{$l$}} & \multicolumn{1}{c}{\multirow{2}{*}{$\|u_*\|_H$}} & \multicolumn{1}{c}{\multirow{2}{*}{$\alpha_n$}} &  & \multicolumn{2}{c}{\multirow{1}{*}{NMOM}} & & \multicolumn{2}{c}{\multirow{1}{*}{LMM}} \\ \cline{6-7} \cline{9-10}
 \rule{0pt}{1.2em} 
 & & &  &  & iter  & time &  &iter & time  \\ \hline
\rule{0pt}{1.1em}
\multirow{4}{*}{1.5} & \multirow{4}{*}{0.5} & \multirow{4}{*}{190.3025} & 1 &  & 39 & 0.1249 &  &\multirow{1}{*}{\text{---} } &\multirow{1}{*}{\text{---} }\\
                     &            &          & 0.1 &  & 450 & 1.0986 &  & \multirow{1}{*}{\text{---} } & \multirow{1}{*}{\text{---} }\\
                     &            &          & 0.01 &  & 4551 & 11.5752 &  &  \multirow{1}{*}{\text{---} } & \multirow{1}{*}{\text{---} }\\
                     &             &          & 0.001 &  & 45562 & 115.7487 &  &  199 & 0.4861 \\ \hline
\rule{0pt}{1.1em}
\multirow{4}{*}{2}   & \multirow{4}{*}{1} & \multirow{4}{*}{37.3289}  & 1 &  & 95 & 0.2338 &  &  \multirow{1}{*}{\text{---} } & \multirow{1}{*}{\text{---} } \\
                     &          &            & 0.1 &  & 996 & 3.0815 &  &  \multirow{1}{*}{\text{---} } & \multirow{1}{*}{\text{---} }\\
                     &           &           & 0.01&  & 10000 & 26.7883 &  & 258 & 0.5876  \\
                     &           &           & 0.001 &  & 100047 & 257.8133&  &  2624 & 6.0588\\ \hline 
\rule{0pt}{1.1em}
\multirow{4}{*}{4}   & \multirow{4}{*}{3}  & \multirow{4}{*}{8.0505}  &1    &  & 111    & 0.2828   &  & \multirow{1}{*}{\text{---} }& \multirow{1}{*}{\text{---} }\\
                     &          &            &0.1  &  & 1145   & 2.8548   &  & 138    & 0.2966   \\
                     &          &            &0.01 &  & 11485  & 28.5428  &  & 1411   & 3.4784   \\
                     &          &            &0.001&  & 114886& 332.3337&  & 14133  & 31.6023  \\ \hline
\end{tabular}
% \vspace{-1ex}
\end{table}

\section{Symmetry-breaking phenomenon of the H\'enon equation}\label{sec:symmetry-breaking} 
The H\'enon equation \eqref{eq:henon} was introduced in 1973 by H\'enon when he studied the structure of rotating stars \cite{Henon1973AA} and then attracted extensive attentions. It is a model for the mass distribution in the spherical symmetric star cluster, where $\mathbf{x}$ is the location of the star, $u$ denotes the stellar density, and parameters $p$ and $l$ represent respectively the central density of each stellar type and the ratio of the transverse velocity to the radial velocity \cite{2021Numerical}. 

Existing researches show that the symmetry of the ground state is related to the parameters $p$ and $l$. Inspired by the interesting numerical observations in \cite{2000Algorithms} on the symmetry-breaking phenomenon, it is proved in \cite{2002NON} that if $\Omega$ is a $N$-dimensional unit ball with $N\geq 2$, for any subcritical exponent $p$, there exists a symmetry-breaking critical value $l^*(p)>0$ s.t. the ground state is no longer symmetric provided $l>l^*(p)$.  For the case where $\Omega=(-1,1)$ and $1<p<\infty$, the theoretical result $0<l^*(p)<4/(p-1)$ is presented in \cite{WOS:000457113400007}. While in numerical aspects, the symmetry-breaking critical value is numerically estimated as $l^*(p)|_{p=3}= 0.5886933$  for $\Omega = (-1/2,1/2)^2$ in \cite{2008Yang} and $l^*(p)|_{p=3}\in[1.2, 1.25]$ for $\Omega = (-1/2,1/2)$ in \cite{2021Numerical}.  However, the exact value of the symmetry-breaking critical value $l^*(p)$ has not yet been given in theory. In addition, until now, no approximate results for the dependency of the symmetry-breaking critical value $l^*(p)$ w.r.t. $p$ has been shown.

In this section, we concentrate on the symmetry-breaking phenomenon of ground states of the H\'enon equation \eqref{eq:henon} when $\Omega = (-1,1)$ and $\Omega = \{(x,y):x^2+y^2<1\}$, and aim to numerically study the relationship of the symmetry-breaking critical value $l^*(p)$ and the exponent $p$.  The ground states are computed by {\bf Algorithm~\ref*{alg:NMOM}$'$} following the parameter settings in Section~\ref{sec:numer-details}. Note that, if $u(\mathbf{x})$ solves \eqref{eq:henon}, then for any $R>0$, $R^{(l+2)/(p-1)}u(R\mathbf{x})$ is exactly the solution to the H\'enon equation \eqref{eq:henon} on the domain $R^{-1}\Omega$. Thus, the radius does not affect the symmetry of the ground state. Our computations use a non-radial initial guess given in \eqref{eq:initial-v0} in order to preferentially find the non-radially symmetric ground states (if exist). Through extensive computations, we can observe and conclude the following numerical findings.%

\subsection{One-dimensional case}
Figure~\ref{fig.1} shows that the symmetry-breaking points $\left.l^*(p)\right|_{p=2} \in (2,3)$ and $\left.l^*(p)\right|_{p=3} \in (1,2)$ when $\Omega = (-1,1)$. In order to numerically find the symmetry-breaking critical value $l^*(p)$ for various $p$, we introduce the asymmetry degree $\mu(u)$ for a nonzero function $u(x)$ to measure its radial symmetry in $\Omega = (-1,1)$ as
\[\mu(u):=\frac{\max _{x \in \Omega}(|u(x)-u(-x)|)}{\max _{x \in \Omega}|u(x)|}.\]
Obviously, $\mu(u) = 0$ if $u$ is radially symmetric (i.e., even) and $\mu(u)>0$ otherwise, and a larger value of $\mu$ means stronger asymmetry. Numerically, the computed ground state is considered to be radially symmetric with $\mu < 10^{-5}$. 

Fix $p$, and use the bisection method to narrow the interval of $l$ where the symmetry-breaking point exists. The numerically computed symmetry-breaking points $(p_j,\tilde{l}^*_j(p_j))$ for $p_j = 1.4+0.05j$, $j=1,2,\ldots,n_p=171$, are obtained and plotted in the left of Figure~\ref{fig.one-dimensional symmertry-breaking points}, which shows that the numerical symmetry-breaking critical value $\tilde{l}^*(p)$ is inversely proportional to $p-1$. Set $\hat{l}^*(p_j):=k_0/(p_j-1)$ to be an approximation to the numerical symmetry-breaking critical value $\tilde{l}^*(p)$, with the coefficient $k_0>0$ to be determined by the data fitting. In fact, minimizing the mean square error $\mathrm{MSE}=\frac{1}{n_p}\sum_{j=1}^{n_p}\big[\tilde{l}_j^*(p_j) - \hat{l}^*(p_j)\big]^2$ results in $k_0=2.4671$ and 
\begin{align}\label{fitting}
  \hat{l}^*(p) = \frac{k_0}{p-1}=\frac{2.4671}{p-1} \quad \mbox{with}\quad \mathrm{MSE}=3.5778\times 10^{-4}.
\end{align}
Interestingly, the coefficient $k_0=2.4671$ in $\hat{l}^*(p)$ \eqref{fitting} is very close to the first Dirichlet eigenvalue $\lambda_1=\pi^2/4\approx 2.4674$ of $-\Delta$ in $\Omega=(-1,1)$ with the error $|\lambda_1-2.4671|=3.01\times 10^{-4}$. In addition, $\hat{l}^*(p)\big|_{p=3} \in [1.2,1.25]$ and $\hat{l}^*(p)<4/(p-1)$, agreeing with numerical observations in \cite{2021Numerical} and theoretical estimates of a upper bound in \cite{WOS:000457113400007}, respectively.

\subsection{Two-dimensional case}
One can observe from Figure \ref{fig.henon symmertry on unit circle} that, when $\Omega = \{(x,y):x^2+y^2 <1\}$, the symmetry-breaking point $\left. l^*(p) \right|_{p=2} \in (1.7,1.8)$ and $\left. l^*(p) \right|_{p=3} \in (0.5,0.6)$. Similar as the 1D case, a nonzero function $u(x,y) = \tilde{u}(r,\theta)$ in the unit disk, with $(r,\theta)$ the polar coordinates, is numerically considered to be radially symmetric if its asymmetry degree $\mu(u)<10^{-4}$, where
\begin{align*}
\mu(u) := \frac{\max_{0\leq r\leq 1}\left(|\max_{0\leq\theta \leq 2\pi}\tilde{u}(r,\theta) - \min_{0\leq\theta \leq 2\pi}\tilde{u}(r,\theta)|\right)}{\max_{0\leq r \leq 1,0\leq \theta \leq 2\pi}\left|\tilde{u}(r,\theta)\right|}.
\end{align*}
By the bisection method, the numerical symmetry-breaking points $(p_j,\tilde{l}^*_j(p_j))$, $p_j = 1.4+0.1j$, $j=1,2,\ldots,n_p=76$, are obtained and  plotted in the left of Figure~\ref{sym_breaking_2dim}. It shows that $l^*(p)$ decreases as $p$ increases. However, there is no obvious linear inverse dependence between $l^*(p)$ and $p-1$, which is distinguished from the 1D case. In fact, the middle of Figure~\ref{sym_breaking_2dim} suggests that $(p-1)l^*(p)$ decreases exponentially  w.r.t.~$p$. Inspired by this, fitting $\tilde{l}^*(p)$ by $\hat{l}^*(p) ={k_1 k_2^{-p}}/{(p-1)}$ with parameters $k_1,k_2>0$ in the least-squares sense, one can arrive at (see the right of Figure~\ref{sym_breaking_2dim}):
\begin{align*}
\hat{l}^*(p) = \frac{3.69\times1.46^{-p}}{p-1}\quad \mbox{with}\quad \mathrm{MSE}=1.33\times 10^{-5}.
\end{align*}

\begin{figure}[!t]
		\centering
  \vspace{-1ex}
	\subfloat{\label{fig.l_p_times_p-1}\includegraphics[width=0.34\textwidth,height=.28\textwidth]{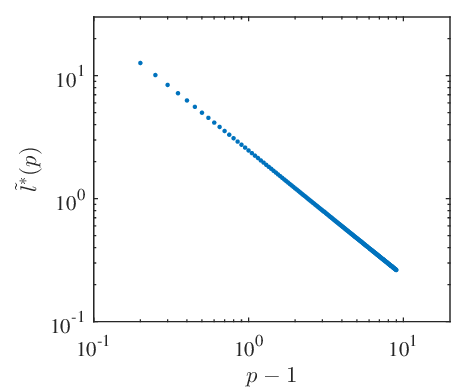}} \quad
 \subfloat{\label{fig.symmetry-breaking}\includegraphics[width=0.34\textwidth,height=.28\textwidth]{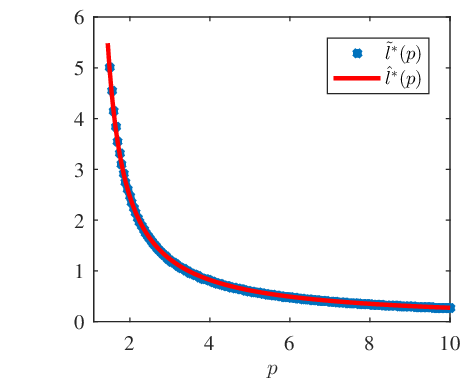} }
 \vspace{-1ex}
	\caption{Scatter plot of numerical symmetry-breaking points $(p_j-1,\tilde{l}_j^*(p_j))$ when $\Omega = (-1,1)$ in logarithmic scales (left) and the corresponding numerical fitting curve $\hat{l}^*(p) = 2.4671/(p-1)$ (right).}
  \label{fig.one-dimensional symmertry-breaking points}
	\end{figure}

\begin{figure}[!t]
\centering
  \includegraphics[width=0.33\textwidth,height=.28\textwidth]{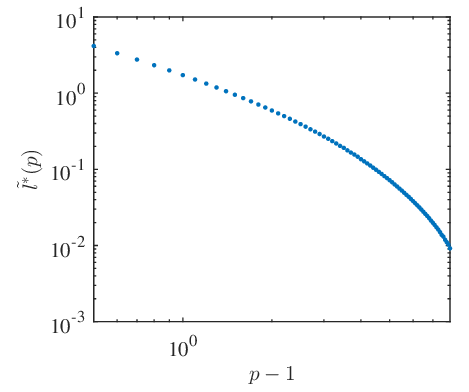} \!\!\!
  \includegraphics[width=0.33\textwidth,height=.28\textwidth]{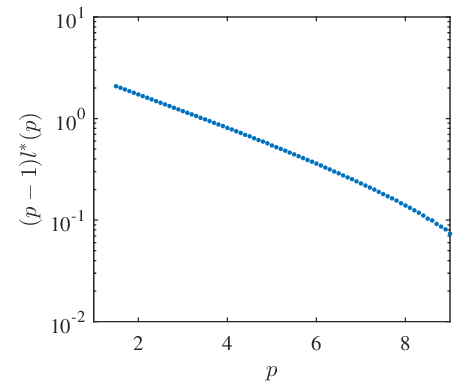} \!\!\!
  \includegraphics[width=0.33\textwidth,height=.28\textwidth]{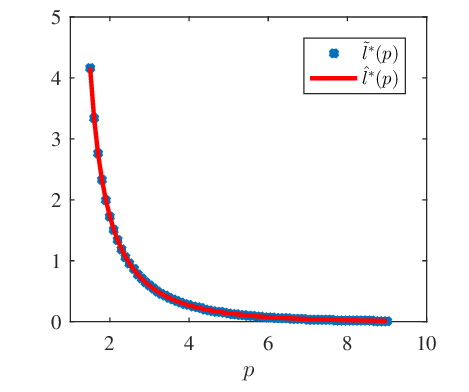} \\[-1ex]
  \caption{Scatter plot of numerical symmetry-breaking points $(p_j-1,\tilde{l}_j^*(p))$ (left) and $(p_j-1,(p_j-1)\tilde{l}_j^*(p) )$ (middle) when $\Omega = \{(x,y):x^2+y^2 <1\}$ in logarithmic scales and the numerical fitting curve $\hat{l}^*(p) = 3.69\times1.46^{-p}/(p-1)$ (right).}
  \label{sym_breaking_2dim}
\end{figure}

\section{Concluding remarks}\label{section:conclusion}
In this paper, for a generic functional in Hilbert space, we established the local variational characterization of its 1-saddles on the Nehari manifold and then transformed the computation of 1-saddles into a minimization problem on the Nehari manifold. Based on this, a framework of the Nehari manifold optimization method (NMOM) was constructed to compute the 1-saddles and the global convergence was established. The effectiveness of the proposed algorithm was successfully demonstrated through the computation of unstable ground states for both the stationary nonlinear Schr\"odinger equation and the H\'enon equation.

We remark that our work bridges Riemannian optimization and the computation of 1-saddles in Hilbert space, and thus, provides new perspectives for grasping unstable critical points. In addition, the developed weak convergence technique is expected to be extended to the convergence analysis for Riemann optimization methods on general infinite-dimensional manifolds. Nevertheless, the study is currently in the preliminary stage, and there are many directions for future investigations. Firstly, 1-saddles are characterized as local minimizers on the Nehari manifold in this paper, which cannot be applied directly to the general $k$-saddles ($k\geq2$). A new manifold should be introduced to allow a suitable variational characterization for $k$-saddles and then Riemannian optimization can be adopted to design corresponding numerical algorithms. Secondly, the NMOM is based on the line-search framework of Riemannian optimization and expected to be extended to some variants based on the trust-region framework in the future. Moreover, the analysis of the convergence rate for the NMOM is not provided in the current stage and can be considered in the future work. Finally, as a stable algorithm for finding 1-saddles, the NMOM is applicable to many nonlinear problems in various scientific fields, which will also be one of our future considerations.

\section*{Reproducibility of computational results}
 Source code that allow readers to reproduce the results in this paper are available at GitHub:\\
 \url{https://github.com/ChenZhaoXing-HUNNU/Nehari-manifold-optimization-method}.
 
\section*{Acknowledgments}
The authors would like to express their appreciation to the anonymous reviewers for the invaluable comments that have greatly improved the quality of the manuscript. Part of the work was done when W.~Liu was employed by National University of Singapore.

\appendix

\section{Proof of Lemma~\ref{lem:E0bounds}}\label{sec:appendix-proof-lem:E0bounds}
The Sobolev embedding $H\hookrightarrow L^{p+1}(\Omega)$, the expression \eqref{manifold} and $g\in L^\infty(\Omega)$ imply that
	\begin{align*}
		\|u\|_{L^{p+1}}^2 \leq C\|u\|_H^2
		 = C\int_{\Omega}g|u|^{p+1}d\mathbf{x}\leq C\|g\|_{L^\infty} \|u\|_{L^{p+1}}^{p+1},\quad\forall\;u\in\mathcal{N},
	\end{align*}
where $C>0$ is a constant independent of $u$. Then $\|u\|_{L^{p+1}} \geq \left(C\|g\|_{L^{\infty}}\right)^{-1/(p-1)} =: \tilde{\sigma}$. The proof is completed immediately by setting $\sigma_0 = \min\{\tilde{\sigma}, \tilde{\sigma}/\sqrt{C}, \tilde{\sigma}^2/C \}$.

\section{Proof of Lemma~\ref{lem:5.2}}\label{sec:appendix-proof-lem:5.2}
Recalling \eqref{eq:Rgrad=ProjHgrad}, one has
	\begin{align}\label{eq:DEun-DNEun}
     \nabla E(u_n) = \nabla_{\mathcal{N}} E(u_n) + \frac{(\nabla E(u_n), \nabla G(u_n))_H}{\|\nabla G(u_n)\|_H^2}\nabla G(u_n).
    \end{align}
Taking $H$ inner products with $u_n$ on both sides, one can conclude that
	\begin{align}\label{eq:DEun-DNEun-un} 
 (\nabla E(u_n),u_n)_H=(\nabla_{\mathcal{N}} E(u_n), u_n)_H + \frac{(\nabla E(u_n), \nabla G(u_n))_H}{\|\nabla G(u_n)\|_H^2}(\nabla G(u_n),u_n)_H.
\end{align}
Since $u_n\in\mathcal{N}$, there holds $(\nabla E(u_n),u_n)_H = 0$ and $E(u_n) = \big(\frac{1}{2} - \frac{1}{p+1}\big)\|u_n\|_H^2$. The boundedness of $E(u_n)$ shows that $\|u_n\|_H$ is bounded in $H$ and thus 
\begin{align}\label{eq:R_E and u converges to 0}
     (\nabla_{\mathcal{N}}E(u_n),u_n)_H \to 0,\quad n\to \infty.
\end{align}
The H\"older inequality and Sobolev embedding $H\hookrightarrow L^{p+1}(\Omega)$ imply that
\begin{align*}
\|\nabla G( u_n)\|_H 
=\sup_{0\neq\phi\in H}\frac{\langle G'(u_n),\phi\rangle}{\|\phi\|_H}
 &= \sup_{0\neq\phi\in H}\frac{2(u_n,\phi)_H - (p+1)\int_{\Omega} g|u_n|^{p-1}u_n \phi d\mathbf{x} }{\|\phi\|_H} \\
 &\leq  2\|u_n\|_H + (p+1)C \|g\|_{L^{\infty}} \|u_n\|_H^p,
\end{align*}
where $C>0$ is a constant independent of $u_n$. Then $\{\nabla G(u_n)\}$ is also bounded in $H$. In addition, Lemma~\ref{lem:E0bounds} states that $\{\|u_n\|_H\}$ has a positive lower bounded, then
$(\nabla G(u_n),u_n) = 2\|u_n\|_H^2 - (p+1)\int_{\Omega} g|u_n|^{p+1} d\mathbf{x} = (1-p)\|u_n\|_H^2$  has negative upper bound. Hence
$\|\nabla G(u_n)\|_H \geq |(\nabla G(u_n),u_n)_H| / \|u_n\|_H$ has a positive lower bound. As a result,
\begin{align}\label{eq:nabla G noconverges_to_0}
 \|\nabla G(u_n)\|_H \nrightarrow \infty, \quad \|\nabla G(u_n)\|_H  \nrightarrow 0 ,\quad
 (\nabla G(u_n), u_n)_H \nrightarrow  0.
\end{align}
With \eqref{eq:R_E and u converges to 0} and \eqref{eq:nabla G noconverges_to_0}, letting $n\to \infty$ in \eqref{eq:DEun-DNEun-un} leads to $(\nabla E(u_n),\nabla G(u_n))_H \to 0$. The proof is completed by letting $n\to \infty$ in \eqref{eq:DEun-DNEun}.

\section{Proof of Lemma~\ref{lem:5.3}}\label{sec:appendix-proof-lem:5.3}
The fact $E(u) = \big(\frac{1}{2} - \frac{1}{p+1}\big)\|u\|_H^2, u\in \mathcal{N}$ with $p>1$ shows that $\mathcal{E}_{u_0}$ is bounded in $H$. Denote $B_{\delta}(0)=\{v\in H:\|v\|_H<\delta\}$. By Lemma~\ref{lem:E0bounds} and the continuity of $\int_{\Omega} g|u|^{p+1}d\mathbf{x}$ w.r.t. the perturbations of $u$, we know that there exist $\tilde{c_1},\tilde{c_2}>0$ and $\delta>0$ s.t., for all $\xi\in T_u\mathcal{N}\cap B_{\delta}(0)$,
\begin{align}\label{bound of u and xi}
		\tilde{c_1} \leq\|u+\xi\|_H\leq \tilde{c_2},  \quad 
		\tilde{c_1} \leq\|u+\xi\|_{L^{p+1}}\leq \tilde{c_2}, \quad
		\tilde{c_1} \leq\int_{\Omega} g|u+\xi|^{p+1}d\mathbf{x}\leq \tilde{c_2},
\end{align} 
and therefore, $\rho(u+\xi)=\big(\|u+\xi\|_H^2/\int_{\Omega }g|u+\xi|^{p+1}d\mathbf{x}\big)^{\frac{1}{p-1}}$ is uniformly bounded from above and below by positive constants. The directional derivative of $\rho$ at $u+\xi$ w.r.t. a direction $\eta\in H$ reads
\begin{align*}
&\langle\rho'(u+\xi),\eta \rangle \\
&\quad\;\; = \frac{2\|u+\xi\|_H^{\frac{4-2p}{p-1}}(u+\xi,\eta)_H}{(p-1)\left(\int_{\Omega }g|u+\xi|^{p+1}d\mathbf{x}\right)^{\frac{1}{p-1}}}  
 - \frac{(p+1)\|u+\xi\|_H^{\frac{2}{p-1}}\int_{\Omega }g|u+\xi|^{p-1}(u+\xi)\eta d\mathbf{x}}{(p-1)\left(\int_{\Omega }g|u+\xi|^{p+1}d\mathbf{x}\right)^{\frac{p}{p-1}}}.
\end{align*}
It follows from \eqref{bound of u and xi}, H{\"o}lder inequality and Sobolev embedding $H\hookrightarrow L^{p+1}(\Omega)$ that there exists a constant $C_L$ independent of $u$ and $\xi$ s.t. $\left|\langle \rho'(u+\xi),\eta \rangle \right| \leq C_L \|\eta\|_H$, $\forall\;\eta\in H$, and thus
% \vspace{-0.5ex}
\begin{align}\label{the derivation of rho is bounded}
\|\rho'(u+\xi)\|_{H^*}\leq C_L, \quad \forall \;u\in \mathcal{E}_{u_0},\ \forall \;\xi \in T_u\mathcal{N}\cap B_{\delta}(0).
\end{align}
Noticing $\mathcal{R}_u(\xi) - u = (\rho(u+\xi)-\rho(u))u + \rho(u+\xi)\xi=\langle \rho'(u+\theta\xi),\xi\rangle u + \rho(u+\xi)\xi$ for some $\theta \in (0,1)$, the condition \eqref{lipschitz continuous} can be verified immediately by utilizing \eqref{the derivation of rho is bounded} and the boundedness of $u$ and $\rho(u+\xi)$. The proof is finished.
% \vspace{-1.5ex}

\bibliographystyle{siamplain}
\bibliography{references}

\end{document}